\theoremstyle{plain}
\newcommand{\gl}{\operatorname{GL}}
\newcommand{\SL}{\operatorname{SL}}
\renewcommand{\varprojlim}{\underleftarrow{\text{lim}}}
\newcommand{\Qbar}{\overline{\mathbb{Q}}}
\newcommand{\Q}{\mathbb{Q}}
\newcommand{\Z}{\mathbb{Z}}
\newcommand{\C}{\mathbb{C}}
\newcommand{\Zp}{\mathbb{Z}_{p}}
\newtheorem*{notation*}{Notation}
\newtheorem{theorem}{Theorem}[section]
\newtheorem*{theorem*}{Theorem}
\newtheorem{corollary}[theorem]{Corollary}
\newtheorem{lemma}[theorem]{Lemma}
\newtheorem{remark}[theorem]{Remark}
\newtheorem{proposition}[theorem]{Proposition}
\newtheorem{definition}[theorem]{Definition}
\theoremstyle{definition}
\newtheorem{example}[theorem]{Example}
\numberwithin{equation}{section}		% Equationnumbering: section.eq#
\numberwithin{figure}{section}			% Figurenumbering: section.fig#
\numberwithin{table}{section}				% Tablenumbering: section.tab#
\title{
	\usefont{OT1}{bch}{b}{n}
	\huge $p$-adic Rankin product $L$-functions 
}
\author{Eknath Ghate and Ravitheja Vangala}
\date{}
\begin{document}
\maketitle

\begin{abstract}
      We describe Panchishkin's construction of the $p$-adic Rankin product $L$-function.
\end{abstract}

\vspace{.2cm}

Let $p$ be an odd prime. In this article we give a construction of the  $p$-adic Rankin product 
$L$-function which interpolates $p$-adically  the special values of the convolution of two cusp forms 
on the  complex upper half plane. The argument given here closely follows Panchishkin's original argument
\cite{Panrankin} where the $S$-adic non-archimedean $L$-function associated to the Rankin product of two modular 
forms was constructed, for $S$ any set of finite primes including $p$. In this exposition we will
specialize the argument given in \cite{Panrankin} to the case $S= \lbrace p \rbrace$. We also provide some background
details and correct a sign error along the way which does not seem to have been noticed in the subsequent literature. 

\section{Introduction}

\subsection{Rankin product $L$-functions}

Let $N$ be an arbitrary natural number. We consider a cusp form $f$ of weight $ k \geq 2 $ for the 
congruence subgroup $\Gamma_{0}(N)$ and nebentypus $\psi$.  We suppose that $ f $ is a 
primitive cusp form, i.e., it is a normalized newform of some level $ C_{f} $ dividing $ N $; $ C_{f} $ 
is called the conductor of  $f$. Let $ g $ be another primitive cusp form of conductor $ C_{g} $ and 
weight $2 \leq l < k$ for  $ \Gamma_{0} (N) $ and nebentypus $\omega$. We set $e(z)=e^{2\pi i z}$ and let
\begin{align}
      f(z)=\sum\limits_{n=1}^{\infty} a(n)e(nz), \ \ \  
      g(z)=\sum\limits_{n=1}^{\infty}b(n)e(nz)
\end{align}
be the Fourier expansions of $f$ and $g$.
The Rankin convolution of the modular forms $ f $ and $ g $ is defined by means of the equality
\begin{align}\label{1.2}
       \mathcal{D}(s,f,g) := L_{N}(2s+2-k-l,\psi\omega) L(s,f,g),
\end{align}
where
$$
	   L(s,f,g) = \sum\limits_{n=1}^{\infty} a(n)b(n)n^{-s},
$$
and $ L_{N}(2s+2-k-l,\psi\omega) $ denotes the Dirichlet $ L $-series with character $\psi\omega$, 
and the subscript $N$ indicates that the factors corresponding to the prime divisors of $N$ are
omitted from the Euler product. A classical method of Rankin and Selberg  \cite{RankinSelberg} 
enables one to construct an analytic continuation of the function $ \mathcal{D}(s,f,g) $ to the 
whole  complex plane and prove that it satisfies a functional equation. Let
$$
f^{\rho}(z):= \ \sum\limits_{n=1}^{\infty} \overline{a(n)} e(nz), \ \ \   g^{\rho}(z):= \ \sum\limits_{n=1}^{\infty} 
\overline{b(n)} e(nz).
$$
Further, define
\begin{align}\label{1.4}
       \Psi(s,f,g)&=\gamma(s)\mathcal{D}(s,f,g), 
\end{align}
where $\gamma(s)=(2\pi)^{-2s}\Gamma(s)\Gamma(s-l+1)$ consists of $\Gamma$-functions. Though we 
do not use it here, $\Psi(s,f,g)$ has a well-known functional equation. For instance, if $\psi$, $\omega$ and 
$\psi^{-1}\omega$ all have conductor $N$ and $C_{f} = C_{g} = N$, then the functional 
equation is (see \cite[$\S$9.5, Theorem 1]{hida_93}):
\begin{align}
      \Psi(s,f^{\rho},g)= W(f^\rho,g) N^{3(-s+(k+l-1)/2)} \Psi(k+l-1-s,f,g^{\rho}),
\end{align}
where
\begin{align}
       W(f^\rho,g) = (-1)^{l}  \Lambda(f^{\rho})  \Lambda(g) \frac{G(\psi^{-1} \omega)}{\lvert G(\psi^{-1} \omega)
       \rvert}, \nonumber
\end{align}
$G(\psi^{-1}\omega)$ is the Gauss sum associated to $\psi^{-1}  \omega$ and $\Lambda(f^\rho)$, $\Lambda(g)$ 
are the root numbers associated to $f^\rho$, $g$ respectively (defined in $\S$2).  Shimura \cite{Shi_76} established  
the following algebraicity result for the special values of 
$\mathcal{D}(s,f,g)$ (see \cite[$\S$10.2, Theorem 1]{hida_93}): the numbers
\begin{align}\label{1.5}
       \Psi(s,f,g)(\pi^{1-l}\langle f,f \rangle_{C_f})^{-1} \in \mathbb{\overline{Q}},
\end{align}
for all integers $l \leq s \leq k-1$. Here $\langle f,f \rangle_{C_{f}}$ is the 
Petersson inner product defined by
$$
        \langle f,f	\rangle_{C_{f}} := \int_{\mathcal{H}/\Gamma_{0}(C_{f})} 
        \vert f(z) \vert ^{2} y^{k-2} \ dx  dy, \ \ \  z=x+iy,
$$
where $ \mathcal{H}/\Gamma_{0}(C_{f})$ is a fundamental domain for the upper half plane 
$\mathcal{H}$ modulo the action of $\Gamma_{0}(C_{f})$. The integers $s = l, \ldots , k-1$ in 
\eqref{1.5} are \enquote{critical} in the sense of Deligne \cite{Del79}. They are precisely the 
values of $s$ for which neither of the functions  $\gamma(s)$ and $ \gamma(k+l-1-s)$ 
in the functional equation have poles.

\subsection{Main theorem}

Let  $\mathbb{C}_{p}=\widehat{\overline{\mathbb{Q}}}_p$ be the completion of  the algebraic closure of  
$\mathbb{Q}_{p}$. Let $\vert \cdot \vert_p$ be the norm on $\C_p$, normalized so that $\vert p \vert_p = 1/p$. 
For any topological group $G$, let $ X({G})$ denote the group of continuous homomorphisms 
from  $G$ to $\mathbb{C}_{p}^{\times}$. The domain of definition of  $p$-adic $L$-functions is the 
$\mathbb{C}_{p}$-analytic Lie group $X_{p} = X(\mathbb{Z}_{p}^{\times})$, where $\mathbb{Z}_{p}^{\times}$ is 
the group of units of $\mathbb{Z}_{p}$. We put $X_{p}^{\mathrm{tors}}= \lbrace \chi \in X_{p} \mid \chi 
\mathrm{ \ has \ finite \ order} \rbrace$. Let  $x_{p}$ denote the embedding $\mathbb{Z}_{p} \hookrightarrow 
\mathbb{C}_{p}$. For a precise statement of the results we introduce the notation    
   $$
           g(\chi) := \ \sum\limits_{n=1}^{\infty} \chi(n)b(n) e(nz),
   $$
for the cusp form $g$ twisted by the Dirichlet character $\chi$. 
We fix an  embedding of ${\overline {\mathbb Q}}$ into ${\mathbb{C}}$ and an embedding 
$i_{p}: \mathbb{\overline{Q}} \hookrightarrow \mathbb{C}_{p}$.  
Then every Dirichlet character $\chi$ whose conductor  $C_\chi$ is a power of  $p$  
can be identified with an element of $X_{p}^\mathrm{tors}$ and  vice versa. 
By Theorem~\ref{Algebraicity}, with $g(\chi)$ replaced by $g^\rho(\overline{\chi})$, the numbers 
$$
\frac{\Psi(l+r,f,g^{\rho}(\overline{\chi}))}{\pi^{1-l}\langle f,f \rangle_{C_f}} \in \overline{\mathbb{Q}},
$$
for  $r = 0,1, \ldots,k-l-1$. 
  In this article we construct  a $\mathbb{C}_{p}$-analytic function on  $X_{p}$ which 
  interpolates the numbers
\begin{align*}
        i_{p} \left(\frac{\Psi(l+r,f,g^{\rho}(\overline{\chi}))}{\pi^{1-l}\langle f,f \rangle_{C_f}} \right),
\end{align*}
for all  $\chi \in X_{p}^{\mathrm{tors}}$ and $r = 0, 1, \ldots, k-l-1$. We work under the assumption 
that $f$ is a $p$-ordinary form, i.e., $a(p)$  is a unit in  $\mathbb{C}_{p}$. In other words
\begin{align}\label{1.9}
      \vert i_{p}(a(p))\vert_{p} = 1.
\end{align}
In addition, we suppose that 
\begin{align}\label{1.10}
        (C_{f},C_{g})=1, \ \ (p,C_{f})=(p,C_{g}) =1,     
\end{align}
and we set $C=C_{f}C_{g}$. Let $\alpha(p)$ denote the root  of the  Hecke polynomial  
$X^{2}-a(p)X+\psi(p)p^{k-1}$, for which $\vert i_{p}(\alpha(p))\vert_{p}=1$ and 
let $\alpha'(p)$ be the other root.  For every prime $q\nmid N$, let
\begin{equation}\label{alpha,beta}
       \begin{split}
 	      X^{2} -a(q)X + \psi(q)q^{k-1}  & = (X-\alpha(q)) (X-\alpha'(q)), \\
              X^{2}- b(q)X + \omega(q)q^{l-1}& = (X-\beta(q))(X-\beta'(q)).
      \end{split}
 \end{equation}
We extend the definition of $\alpha(n)$  to all natural numbers of the form $p^{r}$ 
by setting  $\alpha(p^{r}) := \alpha(p)^{r}$. % and  $\beta(p^{r})=\beta(p)^{r}$.
 
\begin{theorem} \label{maintheorem} \emph{(}\textbf{Main theorem}\emph{)} 
 	Under the assumptions \eqref{1.9} and \eqref{1.10}, there  exists  a unique measure $\mu$ on 
        $\mathbb{Z}_{p}^{\times}$ satisfying  the following interpolation property: for all characters 
         $\chi \in X_{p}^{\mathrm{ tors}}$ and all integers $r$ with $0 \leq r \leq k-l-1$, the value of
          the function $x_p^r \chi$ under the measure $\mu$
         $$
           \int_{\mathbb{Z}_{p}^{\times}} x_{p}^{r} \chi  \  d\mu
         $$ is given by the image under $i_{p}$ of the 
         following algebraic number 
         \begin{align*}
 	       (-1)^{r} \omega(C_{\chi})\frac{G(\chi)^{2}C_{\chi}^{l+2r-1}}
 	        {\alpha(C_{\chi}^{2})}   \cdot  \frac{\Psi(l+r,f,g^{\rho} 
 	  	(\overline{\chi}))}{\pi^{1-l} \langle f , f \rangle_{C_{f}}} .
         \end{align*}
\end{theorem}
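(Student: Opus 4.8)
The plan is to realize each critical value as a Petersson pairing against the ordinary form $f$, package the relevant modular forms into a measure valued in a space of $p$-adic modular forms, and then apply a canonical $f$-linear functional to descend to a scalar measure on $\mathbb{Z}_p^\times$; the interpolation formula will then be read off componentwise, and boundedness of the resulting distribution is where the real work lies.

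First I would recall the Rankin--Selberg integral representation underlying \eqref{1.4}: by the unfolding method of \cite{RankinSelberg}, the value $\Psi(l+r, f, g^{\rho}(\overline{\chi}))$ equals, up to explicit archimedean and elementary factors, the Petersson inner product of $f$ with the holomorphic projection of the product of $g^{\rho}(\overline{\chi})$ and a nearly holomorphic Eisenstein series $E_r$ of weight $k-l$ carrying the character $\chi$. This expresses the special value as a pairing in the finite-dimensional space of cusp forms of weight $k$ on $\Gamma_0(Cp^{\nu})$, where the power $p^{\nu}$ is governed by $C_\chi$. The reason for introducing $g^{\rho}(\overline{\chi})$ rather than $g(\chi)$, in combination with \eqref{1.5} and Theorem~\ref{Algebraicity}, is precisely that these pairings become algebraic after division by $\pi^{1-l}\langle f,f\rangle_{C_f}$.

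Next I would exploit $p$-ordinarity. Since $a(p)$ is a $p$-unit by \eqref{1.9}, Hida's ordinary idempotent $e=\lim_n U_p^{\,n!}$ is defined on the relevant spaces, and there is a $p$-integrally normalized linear functional $\ell_f$ isolating the $\alpha(p)$-eigencomponent of $f$, with denominator controlled by $\langle f,f\rangle_{C_f}$. Applying $\ell_f$ to the forms above converts the Petersson products into $p$-adic numbers and simultaneously produces the Euler factor at $p$. It is here that the normalizing quantity $(-1)^r\omega(C_\chi)G(\chi)^2 C_\chi^{\,l+2r-1}/\alpha(C_\chi^2)$ emerges: the Gauss sums and conductor powers come from rewriting the twisted Eisenstein coefficients, the denominator $\alpha(C_\chi^2)$ from the $U_p$- and diamond-action on forms of $p$-power level under the projector $e$, and the sign $(-1)^r$ from the differential (Maass--Shimura) operators used in the holomorphic projection --- this being the sign whose correction is advertised in the introduction. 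To assemble $\mu$ itself, I would build a single measure $\Phi$ on $\mathbb{Z}_p^\times$ valued in $p$-adic modular forms whose integral against $x_p^r\chi$ recovers the form $g^{\rho}(\overline{\chi})\cdot E_r$, using the standard $p$-adic interpolation of Eisenstein series whose Fourier coefficients are Gauss sums times special values of Dirichlet $L$-functions, interpolated by Kubota--Leopoldt measures; then $\mu=\ell_f\circ\Phi$ has the asserted moments.

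The hard part will be proving that $\mu$ is a genuine \emph{bounded} measure rather than a mere distribution. I would establish the abstract Kummer congruences, showing that any linear combination $\sum c_{r,\chi}\,x_p^r\chi$ with $p$-integral coefficients that vanishes modulo $p^m$ on $\mathbb{Z}_p^\times$ forces $\sum c_{r,\chi}\int_{\mathbb{Z}_p^\times} x_p^r\chi\,d\mu\equiv 0 \pmod{p^m}$; this follows from the $p$-integrality of the Fourier coefficients of the Eisenstein family together with the $q$-expansion principle applied to $\ell_f$, and it is exactly the ordinarity hypothesis \eqref{1.9} that guarantees boundedness here. Finally, uniqueness is immediate: the finite-order characters alone (the case $r=0$) span a dense subspace of $C(\mathbb{Z}_p^\times,\mathbb{C}_p)$, so any bounded measure is determined by the stated values, whence $\mu$ is unique.
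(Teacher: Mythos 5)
Your proposal is not the paper's argument: it is Hida's route \cite{Hid88}, and it is a genuinely different one. The paper (following Panchishkin \cite{Panrankin}) never introduces the ordinary idempotent $e=\lim_n U_p^{\,n!}$, $p$-adic modular forms, or an Eisenstein measure. Instead it proceeds entirely with classical objects: it defines complex-valued distributions $\Psi_{l+r}$ directly from the twisted special values and proves they are well defined and algebraic (Proposition~\ref{Propn 4.1}, Corollary~\ref{Psi is algebraic}); it uses the Rankin--Selberg integral, the trace operator, and Zagier-style holomorphic projection (Lemma~\ref{hol}) to compute completely explicit, algebraic Fourier coefficients of the cusp forms $\widetilde{K}_{M'}(s)$ representing those distributions (Proposition~\ref{propn5.3}); it writes the Petersson functional $\mathcal{L}$ as a finite $\overline{\mathbb{Q}}$-linear combination of Fourier coefficients via duality with the Hecke algebra (Lemma~\ref{Lemma 6.1}); and it then verifies the abstract Kummer congruences by direct manipulation of the coefficients $d(n_1,n_2;s,\chi)$ (Proposition~\ref{prop Kummer}), ordinarity entering only through the $p$-adic unit $\alpha(pM')$. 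Your approach, if carried out, buys more --- it is the one that extends to $p$-adic families --- while the paper's is more elementary and self-contained; both use ordinarity and both terminate in a boundedness/congruence argument.

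As written, however, your sketch has a genuine gap at its central step. For $r>0$ the form $g^\rho(\overline{\chi})\cdot E_r$ is nearly holomorphic, not holomorphic, so it cannot literally be a moment of a measure $\Phi$ valued in $p$-adic modular forms. What one can interpolate $p$-adically is $g^\rho(\overline{\chi})\cdot \theta^r(\mathcal{E})$, where $\theta=q\,d/dq$ and $\mathcal{E}$ is a $p$-depleted Eisenstein series; to deduce the interpolation property of $\mu=\ell_f\circ\Phi$ one then needs the bridging lemma that the ordinary projector does not see the discrepancy between the Maass--Shimura operators $\delta_k^r$ (which govern holomorphic projection and hence the Petersson pairing) and $\theta^r$ --- in essence that $e\circ\theta=0$, which follows from $U_p\theta=p\,\theta U_p$. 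That lemma is the technical heart of Hida's method and is absent from your plan; without it the asserted moments of $\mu$ do not follow from the construction of $\Phi$. Two smaller points: the natural denominator of $\ell_f$ is the congruence number of $f$ (essentially $\langle f_0,f_0\rangle_{pC_f}$) rather than $\pi^{1-l}\langle f,f\rangle_{C_f}$, and reconciling the two normalizations introduces extra Euler factors at $p$ that must be tracked to land exactly on the formula of Theorem~\ref{maintheorem}, including $\alpha(C_\chi^2)^{-1}$ and the sign $(-1)^r$; and your boundedness discussion conflates the two methods --- in the Eisenstein-measure approach boundedness is essentially automatic (a bounded functional applied to a measure with $p$-integral $q$-expansions), whereas it is in the paper's approach that the Kummer congruences require the genuine hand computation.
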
 

This is exactly\footnote{Except that we have added the sign $(-1)^r$ which we feel is necessary 
(see subsequent footnotes).} \cite[Thm. 1.4]{Panrankin}, noting that the extra Euler factors 
$A(r,\chi)$ there do not appear here because $S = \{ p \}$. 
%We have also added the sign $(-1)^r$ which we feel is necessary (see subsequent footnotes).
%\noindent \textbf{Remark}: $\Psi(\chi x_{p}^{r})$ is bounded because of the assumption $\lvert i_{p}(\alpha(p)) \rvert_{p}=1$. 
%
%\subsection{Analytic structure on character groups}
%
Finally we remark that if $\mu$ is a %(bounded) 
$\mathbb{C}_{p}$-valued measure on $\mathbb{Z}_{p}^{\times}$, as in the main theorem above, 
then the function $L_\mu$ (the $p$-adic $L$-function attached to $\mu$) 
%the non-Archimedean Mellin transform 
defined by
\begin{align}
       L_{\mu}(\chi) =   \mu(\chi) = \ \int_{\mathbb{Z}_{p}^{\times}} \chi 
         \ d\mu, \ \ \forall \ \chi \in X_{p} ,
\end{align}
always turns out to be a 
%bounded 
$\mathbb{C}_{p}$-analytic function $L_{\mu}:X_{p}\longrightarrow 
\mathbb{C}_{p}$. 

To make sense of the last statement we briefly recall the $\mathbb{C}_{p}$-analytic structure on 
$X_p = X(\mathbb{Z}_{p}^{\times})$.  
%mentioned in the main theorem. 
We set
$$
       U = \lbrace x \in \mathbb{Z}_{p}^{\times} 
        \mid x \equiv 1 \> \textrm{mod} \  p \rbrace,
$$
 units of $\mathbb{Z}_{p}$ congruent to 1 mod $p$. Then we have the following decomposition
$$
        X_{p} = X (( \mathbb{Z}  / p\mathbb{Z})^{\times})  \times X(U).
$$
Therefore every $\chi \in X_{p}$ can be written as
$\chi_{0} \chi_{1}$ with $\chi_{0} \in  X (( \mathbb{Z}  / p \mathbb{Z}) ^{\times})$ and $\chi_{1} \in X(U)$. 
The characters $\chi_{0}$ and $\chi_{1}$ are called the tame part and the wild part of the character $\chi$ 
respectively. 

We claim the function $\varphi$ defined by $\varphi(\chi) := \chi(1+p)$, where $1+p$ is a topological 
generator of the % multiplicative 
group $U$, induces an isomorphism of groups

$$
    \varphi: X(U) \stackrel{\sim}{\longrightarrow} T :=  \lbrace t \in \mathbb{C}_{p}^{\times} \mid  \vert t -1 
    \vert_{p}  < 1 \rbrace.
$$
This isomorphism defines an analytic structure on $X(U)$, which can easily be checked to be independent 
of the choice of generator $1+p$. 
We first check that $\varphi$ is well defined, i.e., $\varphi$ takes values in $T$. Let $\chi \in X(U)$. 
Since $(1+p)^{p^{n}} \rightarrow 1 $ as $n \rightarrow \infty $, by the continuity of $\chi$ we have 
$(\chi(1+p))^{p^{n}} \rightarrow 1$.  Hence, $ \vert \chi (1+p) \vert_{p} = 1 $ and  
$\lvert \chi(1+p) - 1 \rvert_{p} \leq\mathrm{max} \lbrace \lvert \chi(1+p) \rvert _{p}, 1 \rbrace \leq 1$. 
We now claim that  $\lvert \chi(1+p) -1 \rvert_{p} < 1$. Suppose not, then $\lvert \chi(1+p)-1 \rvert_{p} =1$. 
Therefore
\begin{align*}
         1 & =  \vert (\chi(1+p)-1)^{p^{n}} \vert_{p}  \\
         &=   \Big\vert \sum\limits_{k=1}^{p^{n}} \binom{ \ p^{n} }{k} (\chi(1+p)-1)^{k}  \Big\vert_{p} 
         %&= \max  \lbrace \lvert \binom{ \ p^{n} }{k} (\chi(1+p)-1)^{k}  \rvert_{p}  : \ k=1,\ldots,p^{n} \rbrace \\
         \ \ \ \  (\mathrm{ as} \  p  \Bigm\vert  \binom{ p^{n}}{k \ }  (\chi(1+p)-1)^{k},  \ \forall \  1 \leq  k  < p^{n})\\   
         &=  \lvert (\chi(1+p)-1+1)^{p^{n}} - 1 \rvert _{p} \\
         & =  \lvert (\chi(1+p))^{p^{n}} - 1 \rvert _{p} .
\end{align*}
But, this contradicts $(\chi(1+p))^{p^{n}}  \rightarrow 1$. Hence,  $\vert \chi(1+p)-1 \vert_{p} < 1$.  
We now show that $\varphi$ is an isomorphism. Every character $\chi \in X_{p}$ is uniquely determined by 
$\chi(1+p)$, since  $1+p$ is a topological generator of $U$, hence $\varphi$ is injective. 
For $t \in T$, define  $\chi_{t}((1+p)^{n}) =t^{n}$, for all $n \in \mathbb{Z}$ . Extending  $\chi_{t}$ to 
all of $1+p\mathbb{Z}_{p}$ by continuity we get an element of $X(U)$ which maps to $t$ under $\varphi$. 
Hence $\varphi$ is also surjective. 
A function $F: T \rightarrow \mathbb{C}_{p}$ is said to be  analytic if $F(t)$ can be expressed as a
power series, i.e.,  $F(t)=\sum_{i=0}^{\infty} a_{i}(t-1)^{i}, \ a_{i} \in \mathbb{C}_{p}$, which converges 
absolutely for all $t \in T$. The isomorphism  $\varphi : X(U) \simeq T$ allows us to define an analytic structure
 on $X(U)$.  Finally the notion of analyticity can be extended to all of $X_{p}$ by translation. 

%Note that any %bounded 
%$\mathbb{C}_{p}$-analytic function $F$ on $X_{p}$ is uniquely determined by its values  
%$F(\chi_{0}\chi)$, where $\chi_0 \in X_{p}$  and $\chi$ runs through all elements of finite  
%order in $X_{p}$, by the Weierstrass preparation theorem. 

In closing this introduction, we remark that Hida \cite{Hid88} subsequently 
constructed a more general measure interpolating the critical Rankin product 
$L$-values of two cusp forms which themselves vary in $p$-adic families,
\footnote{The sign mentioned in the previous footnote is consistent with the sign in 
 \cite[Theorem I]{Hid88}.} and in a different direction, Vienney \cite{Vie00}
has generalized Panchishkin’s argument to cases where $a(p)$ is not a $p$-adic unit.
\footnote{Again, the author adds a sign of $(-1)^r$.}

\subsection{Outline  of the paper} 

We recall notation and results from the theory of modular forms in 
$\S2$. In $\S$3 we recall  generalities about distributions and measures and state a 
criterion for a distribution whose values are known on a specific set of functions to be a measure
 in terms of the abstract Kummer congruences. The measure  in Theorem~\ref{maintheorem}  is 
 obtained from certain complex-valued distributions $\Psi_{s}$, which we construct in  $\S$4 using 
 the definition of the convolution \eqref{1.2}. The distributions $\Psi_{s}$  take values 
 in $\Qbar$ on $X_{p}^{\mathrm{tors}}$ for integers $l \leq s \leq k-1$. In $\S$5 we obtain an integral representation for these 
 distribution values using the Rankin-Selberg method  and holomorphic projection. 
%Since $\Psi_{s}(\chi)$ is algebraic  for $l \leq s \leq k-1 $,  we use $i_{p}$ to obtain the 
%$\mathbb{C}_{p}$-valued distributions $i_{p}\circ \Psi_{s}$ . 
In $\S$6  we prove that the ${\mathbb{C}}_p$-valued distributions  $i_{p}(\Psi_{s})$ satisfy the 
abstract Kummer congruences to finish the proof of Theorem \ref{maintheorem}. 

\section{Background on Modular forms}

In this section we  recall a few results from  the theory of  classical modular forms.  
Most of the material covered here is well known. In this section  $f$ and $g$  
are arbitrary functions which need not satisfy the assumptions of $\S1$ unless  otherwise
 stated. Also, $\chi$, $\psi$, $\omega$ denote Dirichlet characters. 
Let $\mathrm{M}_{2}(\mathbb{Z})$ denote the set of $2\times2$ matrices with entries in 
$\mathbb{Z}$. Let $\SL_{2}(\mathbb{Z})$ denote the set of matrices with determinant 1 
in $\mathrm{M}_{2}(\mathbb{Z})$.   Let $\mathbb{C}$ denote the complex plane. We write an 
element $z \in \mathbb{C}$ as $x+iy$, where $x,y \in \mathbb{R}$ and $i^{2}=-1$. For 
$z =x+iy \in \mathbb{C} $, sometimes we denote $x$ and $y$ by Re($z$) and Im($z$) 
respectively. 

\subsection{Classical modular forms}
Let  $\mathcal{H}= \lbrace z  \in \mathbb{C} \mid \mathrm{Im}(z)> 0 \rbrace $ denote the complex 
upper half plane, on which the group $\gl_{2}^{+}(\mathbb{R})$ of real  $2 \times 2$ matrices with 
positive determinant acts by fractional linear transformations. For any natural number $k$, we have a weight $k$ 
action of $\gl_{2}^{+}(\mathbb{R})$ on functions $f: \mathcal{H} \rightarrow \mathbb{C}$ 
given by:
\begin{equation*}
      (f\vert_{k}\gamma)(z) =( \mathrm{det}\gamma)^{k/2} (cz+d)^{-k} f\bigg (\frac{az+b}{cz+d}\bigg ), 
       \ \ \forall \ \gamma =   \begin{pmatrix}  a & b \\ c & d \end{pmatrix} \in  \gl_{2}^{+}(\mathbb{R}).
\end{equation*}
For any natural number $N$, we have the following subgroups:
\begin{eqnarray*}
       \Gamma_{0}(N)  & =& \bigg\lbrace  \begin{pmatrix} a & b \\ c & d  \end{pmatrix}
         \in \SL_{2}(\mathbb{Z}) \mid c \equiv 0 \  \ \text{mod } N \bigg\rbrace, \\
       \Gamma_{1}(N) & =& \bigg\lbrace \begin{pmatrix} a & b \\ c & d \end{pmatrix}
      \in \Gamma_{0}(N) \mid a \equiv d \equiv 1 \  \ \text{mod } N \bigg\rbrace, \\
      \Gamma(N) & =& \bigg\lbrace \begin{pmatrix} a & b \\ c & d  \end{pmatrix}
        \in \Gamma_{1}(N) \mid b \equiv 0 \  \ \text{mod } N \bigg\rbrace.
\end{eqnarray*}
\begin{definition}
       A subgroup $\Gamma$ of $\SL_{2}(\mathbb{Z})$ is  called a congruence subgroup if 
       $\Gamma(N)\subset\Gamma$ for some $N>0$. The smallest $N$ satisfying this condition 
       is called  the level of the congruence subgroup.
\end{definition}
If $\Gamma$ is a congruence group, then $M_{k}(\Gamma)$ denotes the  complex vector space of 
modular forms of weight $k$ for $\Gamma$.  These consist of holomorphic functions 
$f : \mathcal{H} \rightarrow \mathbb{C}$ which satisfy $f \vert_k \gamma = f$, for all 
$\gamma \in \Gamma$, and a  holomorphicity condition at the cusps of $\Gamma$. Let
$S_{k}(\Gamma)$ denote the subspace of cusp forms consisting of those $f$ which in addition 
vanish at the cusps.
 
 \begin{notation*} Throughout the article we use the following notation:
 	\begin{enumerate} [label=\emph{(\roman*)}]
 		\item For every integer $M$, let $S(M)$ denote the set of primes dividing $M$.
 		\item For every $\gamma= \begin{psmallmatrix} a & b \\ c & d \end{psmallmatrix} 
 		          \in  \mathrm{M}_{2}(\mathbb{Z})$, and Dirichlet character $\psi$ we put 
 		          $\psi(\gamma)= \psi(d)$.
 		\item Let $\chi_{0}$ denote the principal character on $\mathbb{Z}$. It is given by 
 		          $\chi_{0}(n)=1, \ \forall\  n \in \mathbb{Z}$. 
 	\end{enumerate}
\end{notation*}
\noindent If $\psi$  is a Dirichlet character mod $N$, we set
\begin{eqnarray*}
       M_{k}(N,\psi) & = & \lbrace f \in M_{k}(\Gamma_{1}(N)) \> \mid \> f\vert_{k}  
                           \gamma = \psi(\gamma) f, \ \forall \ \gamma  \in \Gamma_{0}(N) \rbrace, \\
       S_{k}(N,\psi) & = & S_{k}(\Gamma_{1}(N)) \cap M_{k}(N,\psi).
\end{eqnarray*}	     
For an arbitrary modular form $h \in M_{k}(N,\psi)$ with $k \geq 1$ and a cusp form 
$f \in S_{k}(N, \psi)$ the Petersson inner product is defined by  the integral 
\begin{equation}
         \langle f,h \rangle_{N}= \int_{\mathcal{H}/\Gamma_{0}(N)} \overline{f(z)}  h(z)  y^{k-2} \ dx  dy, 
\end{equation}
where $ \mathcal{H}/\Gamma_{0}(N)$ is a fundamental domain for the  upper half plane 
$\mathcal{H}$  modulo the action of $\Gamma_{0}(N)$.  Observe that if 
$\gamma \in \gl_{2}^{+}(\mathbb{R})$ normalizes $\Gamma_{0}(N)$ and $\gamma^2$ is a scalar matrix, then 
(see \cite[Theorem 2.8.2]{Miyake})
\begin{align}\label{2.2}
        \langle f \vert_{k} \gamma,h \rangle_{N}= \langle f  ,h\vert_{k}\gamma \rangle_{N}.
\end{align}
For the rest of this subsection assume that $M$, $N$ are positive integers such that $S(NM)=S(N)$. 
 Since $S(NM) = S(N)$ it can be checked that 
$[ \Gamma_{0}(N):\Gamma_{0}(NM)]=M$ and 
$$
\lbrace \beta_{u}  = \begin{psmallmatrix}1 & 0 \\ uN & 1 \end{psmallmatrix} \mid   u = 1, \ldots, M  \rbrace
$$ 
is a set of coset representatives for $\Gamma_{0}(NM) \backslash \Gamma_{0}(N)$. 
%Observe that for every $\gamma= \begin{psmallmatrix}
%a & b \\ c & d \end{psmallmatrix} \in \Gamma_{0}(N)$ and $0 \leq u,u' < M$  we have 
%$$
%\begin{pmatrix} 1 & 0 \\ uN & 1 \end{pmatrix}		\begin{pmatrix} a & b \\ c & d \end{pmatrix} 
%\begin{pmatrix} 1 & 0 \\ u'N & 1 \end{pmatrix} ^{-1} =  \begin{pmatrix} a -bu'N & b \\ N(au+ \frac{c}{N}-u'(buN+d))  & buN+d \end{pmatrix}.	
%$$
Therefore, for every $\gamma\in \Gamma_{0}(N)$ and $\beta_{u}$, there exists unique
$\gamma_{u} \in \Gamma_{0}(NM)$ and $\beta_{u'}$ such that $\beta_{u} \gamma = \gamma_{u} \beta_{u'}$. 
Since  $\beta_{u}$, $\beta_{u'} \equiv \mathrm{I}_{2}$ mod $N$ we have  
$$
      \gamma \equiv \beta_{u} \gamma  = \gamma_{u} \beta_{u'} \equiv \gamma_{u} \ \mathrm{mod} \ N.
$$
%From the condition $S(NM) = S(N)$, we have for every $p\in S(M)$,  $p\mid N$ and $p \nmid d$. So, $(buN+d) \in (\mathbb{Z}/ M \mathbb{Z})^{\ast}$. Since $N \mid c$ we have $(c/N) \in \mathbb{Z}$. Therefore, for every $u$ mod $M$ there exists unique $u'$ mod $M$ such that $\gamma_{u} := \beta _{u}\gamma\beta_{u'}^{-1} \in \Gamma_{0}(NM) $ and $\psi(\gamma_{u}) = \psi(\gamma)$.  For a Dirichlet character $\psi$ modulo $N$ and $h \in M_{k}(NM,\psi)$ we have 
 For a Dirichlet character $\psi$ modulo $N$ and $h \in M_{k}(NM,\psi)$ we have 
$$
\left ( \sum_{u=1}^{M} h\vert_{k} \beta_{u} \right ) \vert_{k}\gamma = 	\sum\limits_{u= 1}^{M} h\vert_{k}  \gamma_{u}	\beta_{u'}	
    = \sum\limits_{u = 1}^{M}  \psi(\gamma_{u}) \cdot h\vert_{k} \beta_{u'} =\psi(\gamma) \cdot \sum\limits_{u = 1}^{M} h\vert_{k} \beta_{u}.$$
Therefore  $\sum_{u =1}^{M} h\vert_{k} \beta_{u} \in M_{k}(N,\psi)$. %For any set of coset representatives $\lbrace \gamma_{1}, \ldots, \gamma_{M} \rbrace $  of  $\Gamma_{0}(NM) \backslash \Gamma_{0}(N)$, it can be checked that $\sum_{u =0}^{M-1} h\vert_{k} \beta_{u} = \sum_{i=1}^{M} h \vert_{k} \gamma_{i}$. 
For $M$, $N$  positive integers such that $S(NM) = S(N)$ and  a Dirichlet character $\psi$ modulo $N$, 
define the trace operator $Tr_{N}^{NM}: M_{k}(NM,\psi) \rightarrow M_{k}(N,\psi)$  by the equality 
\begin{equation}
 	  Tr_{N}^{NM}(h) =  \sum\limits_{u=1}^{M} h \vert_{k} \beta_{u} 
 	  =  \sum\limits_{u=1}^{M} h \vert_{k} \left (\begin{smallmatrix}1 & 0 \\ uN & 1 \end{smallmatrix} \right).
\end{equation}

\begin{remark}
 The definition of the trace above depends on the choice of coset representatives 
 $ \lbrace \beta_1, \ldots, \beta_M \rbrace$ of $\Gamma_{0}(NM) \backslash \Gamma_{0}(N)$. 
 In the computations below, we always use this choice.
\end{remark}
\begin{lemma}\label{lemma2.2}
	Let $\psi$ be a Dirichlet character modulo $N$. Let  $f \in S_{k}(N,\psi)$ and $h \in M_{k}(NM,\psi) $. 
	If $S(M) \subset S(N)$, then $\langle f , h \rangle _{NM}= \langle f, Tr_{N}^{NM}(h)\rangle_{N}$. 
\end{lemma}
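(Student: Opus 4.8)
The plan is to compute $\langle f,h\rangle_{NM}$ by breaking up a fundamental domain for $\Gamma_0(NM)$ into translates of a fundamental domain for $\Gamma_0(N)$ under the coset representatives $\beta_u$, and then to check that the automorphy factors conspire to turn the integrand on the $u$-th piece into $\overline{f}\cdot(h\vert_k\beta_u)$. Summing over $u$ then assembles $Tr_N^{NM}(h)$.

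First I would record that the hyperbolic measure $d\mu = y^{-2}\,dx\,dy$ is invariant under $\gl_2^+(\mathbb{R})$, so that writing $\Phi(z) = \overline{f(z)}\,h(z)\,y^k$ we have $\langle f,h\rangle_{NM} = \int_{\mathcal{F}_{NM}} \Phi\,d\mu$ for any fundamental domain $\mathcal{F}_{NM}$ of $\Gamma_0(NM)$, and likewise $\langle f,h\rangle_N = \int_{\mathcal{F}_N}\Phi\,d\mu$. The hypothesis $S(M)\subset S(N)$ (equivalently $S(NM)=S(N)$) guarantees, as recalled before the lemma, that $[\Gamma_0(N):\Gamma_0(NM)]=M$ with $\Gamma_0(N)=\bigsqcup_{u=1}^M \Gamma_0(NM)\beta_u$. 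Combining this with the tiling $\mathcal{H}=\bigsqcup_{\gamma\in\Gamma_0(N)}\gamma\mathcal{F}_N$ shows that $\bigsqcup_{u=1}^M \beta_u\mathcal{F}_N$ is a fundamental domain for $\Gamma_0(NM)$, up to a set of measure zero.

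Splitting the integral over this fundamental domain and changing variables $z\mapsto\beta_u z$ on each piece (justified by the invariance of $d\mu$) reduces everything to integrals over $\mathcal{F}_N$ of $\Phi(\beta_u w)$. The heart of the argument is the identity $\Phi(\beta_u w) = \overline{f(w)}\,(h\vert_k\beta_u)(w)\,(\mathrm{Im}\,w)^k$. To verify it I would use $\det\beta_u=1$ and the fact that $\beta_u=\begin{psmallmatrix} 1 & 0 \\ uN & 1 \end{psmallmatrix}\in\Gamma_0(N)$ has lower-right entry $1$, so the nebentypus convention gives $\psi(\beta_u)=\psi(1)=1$ and hence $f\vert_k\beta_u=f$, i.e. $f(\beta_u w)=(uNw+1)^k f(w)$. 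Together with $h(\beta_u w)=(uNw+1)^k(h\vert_k\beta_u)(w)$ and $\mathrm{Im}(\beta_u w)=\mathrm{Im}(w)/|uNw+1|^2$, the factors $(uN\overline{w}+1)^k(uNw+1)^k=|uNw+1|^{2k}$ cancel against the $k$-th power of the denominator, leaving precisely $\overline{f(w)}\,(h\vert_k\beta_u)(w)\,(\mathrm{Im}\,w)^k$.

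Finally, summing over $u$ and pulling the sum inside gives $\sum_{u=1}^M (h\vert_k\beta_u)(w)=Tr_N^{NM}(h)(w)$, so that $\langle f,h\rangle_{NM}=\int_{\mathcal{F}_N}\overline{f(w)}\,Tr_N^{NM}(h)(w)\,(\mathrm{Im}\,w)^{k-2}\,dx\,dy=\langle f,Tr_N^{NM}(h)\rangle_N$, as claimed. I expect the main obstacle to be purely bookkeeping: getting the coset direction right so that $\bigsqcup_u \beta_u\mathcal{F}_N$ genuinely tiles a fundamental domain for $\Gamma_0(NM)$, and tracking the complex-conjugate automorphy factor carefully through the cancellation. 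Convergence of all integrals is not a concern, since $f$ is a cusp form and hence $\overline{f}\,h\,y^k$ decays rapidly at every cusp.
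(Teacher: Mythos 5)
Your proposal is correct and follows essentially the same route as the paper's proof: decompose a fundamental domain for $\Gamma_{0}(NM)$ as $\coprod_{u=1}^{M}\beta_{u}\mathcal{D}$ with $\mathcal{D}$ a fundamental domain for $\Gamma_{0}(N)$, change variables on each piece, use $f\vert_{k}\beta_{u}=f$ (since $\psi(\beta_{u})=\psi(1)=1$), and sum over $u$ to assemble $Tr_{N}^{NM}(h)$. You merely make explicit the automorphy-factor cancellation and measure invariance that the paper compresses into the slash-operator notation.
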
	
\begin{proof}
       Let $\lbrace \beta_{1}, \ldots, \beta_{M} \rbrace $  be as above. If $\mathcal{D}$ is a fundamental 
       domain for $ \Gamma_{0}(N)$, then $\coprod_{u=1}^{M} \beta_{u} \mathcal{D}$ is a fundamental domain 
       for $\Gamma_{0}(NM)$. Therefore
       \begin{align*}
              \langle f,h \rangle_{NM} 
               &= \int_{\mathcal{H}/\Gamma_{0}(NM)} \overline{f(z)} h(z) y^{k-2} \ dx dy\\
	       & = \sum\limits_{u = 1}^{M}  \int_{\beta_{u} \mathcal{D}} \overline{f(z)} h(z) y^{k-2} \ dx   
	           dy  \\
               & = \sum\limits_{u =1}^{M}  \int_{\mathcal{D}} \overline{(f\vert_{k}\beta_{u})(z)}      
                   (h\vert_{k}\beta_{u})(z) y^{k-2} \ dx  dy \\
	       & = \sum\limits_{u =1}^{M} \int_{\mathcal{D}} \overline{f(z)}\ (h\vert_{k}\beta_{u})(z) 
	           y^{k-2} \ dx  dy \\
               & = \langle f, Tr_{N}^{NM}(h) \rangle_{N}. 
                \tag*{\qedhere}
       \end{align*} 
\end{proof}	
For any integer $k$, complex number $s$ and Dirichlet characters $\chi$, $\psi$  modulo $L$, $M$ 
respectively, we define (see \cite[Chapter 7]{Miyake}) the non-holomorphic Eisenstein series of weight $k$ by
\begin{equation}\label{Eisenstein series}
       E_{k}(z,s; \chi, \psi) = y^{s} \sideset{}{'} \sum\limits_{c,d \> =- \infty} ^ 
       {\infty}  \chi(c) \psi(d) (cz+d)^{-k} \vert cz+d \vert ^{-2s}, \ \forall \ z \in \mathcal{H},
\end{equation}
where the prime means that the sum  is over all $(c,d) \in \mathbb{Z}^{2} \smallsetminus 
\lbrace (0,0) \rbrace $. The series converges for Re$(k+2s) > 2$ and can be  continued 
meromorphically to the whole complex plane as a function of $s$. Further,  if $k\geq 3$, 
then  $E_{k}(z,0;\chi,\chi_{0}) \in M_{k}(L,\chi)$ \cite[Lemma 7.1.4, Lemma 7.1.5]{Miyake}.  

\subsection{Operators acting on  modular forms }
\label{operators}

 Let $f \in S_{k}(N,\psi)$ be a cusp form with the Fourier expansion 
$$
	    f(z)= \sum\limits_{n=1}^{\infty}a(n)e(nz).
$$
If $d$ is a natural number, then define
\begin{eqnarray*}
	    f \vert  U_{d} & = & \sum\limits_{n = 1}^{\infty} a(dn)e(nz) 
	     \> = \> d^{k/2-1} \sum\limits_{u \ \mathrm{mod} \ d} f\vert_{k}	 
         \begin{pmatrix} 1 &  u  \\ 0 & d \end{pmatrix}, \\
         f \vert  V_{d} & = & f(dz) \> = \> d^{-k/2} f\vert_{k} 
         \begin{pmatrix}  d & 0 \\ 0 & 1  \end{pmatrix} \ \in \ S_{k}(Nd, \psi),\\
	    f^{\rho}(z)   & = & \overline{f(-\overline{z})}
	     \> = \> \sum\limits_{n=1}^{\infty}\overline{a(n)} e(nz) \ \in \ S_{k}(N,\overline{\psi}),  \\ 
	    f\vert w_{d}   & = & (\sqrt{d}z)^{-k} f \Big ( \frac{-1}{dz} \Big ) \> = \> f\vert_{k} 
         \begin{pmatrix}  0 & -1 \\   d & 0  \end{pmatrix}, \ f\vert w_{N} \ \in \ S_{k}(N,\overline{\psi}).
\end{eqnarray*}
Also, the Hecke operators $T_{n}: M_{k}(N, \psi) \rightarrow M_{k}(N, \psi)$  are  defined by 
$(T_{n}f)(z)= \sum\limits_{m=0}^{\infty} a(m,T_{n}f)e(mz)$, where 
$a(m,T_{n}f)= \sum\limits_{0< d \mid (m,n)}\psi(d) d^{k-1}a(mn/d^{2})$. 

When $S(NM)=S(N)$, we have the following identity, which will be used for explicit computations:
\begin{eqnarray}\label{2.10}
        Tr_{N}^{NM}(f) & = & (-1)^{k} M^{1-k/2} \ f \vert w_{NM}U_{M}w_{N}, \  \forall \ f \in S_{k}(N,\psi).
\end{eqnarray}
%As observed in Lemma \ref{lemma2.2},  $ \lbrace \begin{psmallmatrix} 1  & 0 \\ uN & 1 \\ \end{psmallmatrix} \  \vert \  u \ \text{mod} \ M  \rbrace$ is a set of coset representatives for $\Gamma_{0}(NM) / \Gamma_{0}(N)$. 
The above identity follows from the definitions and the matrix identity:
\begin{equation*}
        \begin{pmatrix} 1 & 0 \\	uN & 1  \end{pmatrix}
        = -(NM)^{-1} \begin{pmatrix} 0 & -1 \\ NM & 0  \end{pmatrix} 
        \begin{pmatrix} 1 & -u \\ 0 & M  \end{pmatrix} \begin{pmatrix} 0 & -1 \\ N & 0  \end{pmatrix} .
\end{equation*}
\begin{lemma}\label{prop2.1}
	Let $f(z) = \sum_{n=1}^{\infty} a(n)e(nz) \in S_{k}(N,\psi)$  and $U_{d}$, $V_{d}$, $T_{n}$ be as above. 
	\begin{enumerate}[label={\emph{(\arabic*)}}]
	\item If $d^{2} \mid N$ and $\psi$ is a Dirichlet character mod $N/d$, then $f\vert  U_{d} \in S_{k}(N/d,\psi).$
	\item For $n \geq 1$, we have $ T_{n}(f) = \sum\limits_{ad=n} \psi(d) d ^{k-1} f \vert U_{a} V_{d}$.  Hence, 
	          $T_{p} =  U_{p}$ if  $p \mid N$ is a prime. 
    \end{enumerate}
\end{lemma}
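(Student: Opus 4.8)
The plan is to prove the two assertions separately: (2) is a direct comparison of Fourier coefficients, while (1) requires checking the modular transformation law via the matrix description of $U_d$.

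For (2), I would compute the Fourier expansion of the right-hand side. Since $f\vert U_a = \sum_{m\geq 1} a(am)e(mz)$ and $V_d$ sends $z \mapsto dz$, the form $f\vert U_a V_d = \sum_{m\geq 1} a(am)e(dmz)$ contributes $a(aM/d)$ to the coefficient of $e(Mz)$ exactly when $d \mid M$. Writing $a = n/d$ and summing over the factorizations $ad = n$ subject to $d \mid M$, the coefficient of $e(Mz)$ in $\sum_{ad=n}\psi(d)d^{k-1}\,f\vert U_a V_d$ becomes $\sum_{d \mid (n,M)} \psi(d)\,d^{k-1}\,a(nM/d^2)$, which is precisely $a(M,T_n f)$ by the definition of $T_n$; this proves the identity. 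For the final clause, taking $n=p$ with $p \mid N$ leaves only the two factorizations $(a,d)=(p,1),(1,p)$, giving $T_p f = f\vert U_p + \psi(p)p^{k-1}f\vert V_p$ (since $U_1 = V_1 = \mathrm{id}$), and $\psi(p)=0$ because $\psi$ is a character modulo $N$; hence $T_p = U_p$.

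For (1), I would use $f\vert U_d = d^{k/2-1}\sum_{u \bmod d} f\vert_k \alpha_u$ with $\alpha_u = \begin{psmallmatrix} 1 & u \\ 0 & d \end{psmallmatrix}$, and verify $(f\vert U_d)\vert_k \gamma = \psi(\gamma)(f\vert U_d)$ for every $\gamma = \begin{psmallmatrix} a & b \\ c & \delta \end{psmallmatrix} \in \Gamma_0(N/d)$. The crux is the factorization $\alpha_u \gamma = \gamma_u \alpha_{u'}$, where $\gamma_u = \frac1d \alpha_u \gamma \begin{psmallmatrix} d & -u' \\ 0 & 1 \end{psmallmatrix}$ lies in $\SL_2(\mathbb{Z})$ and $u \mapsto u'$ permutes the residues modulo $d$. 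The lower-left entry of $\gamma_u$ is $dc$; since $d^2 \mid N$ we have $d \mid N/d$, so $c \equiv 0 \bmod N/d$ forces $c \equiv 0 \bmod d$ and hence $dc \equiv 0 \bmod N$, placing $\gamma_u$ in $\Gamma_0(N)$. Integrality of $\gamma_u$ amounts to choosing $u'$ with $au' \equiv b + u\delta \pmod d$ (using $a + uc \equiv a$ since $d \mid c$), which is solvable and defines a bijection $u \mapsto u'$ because $a\delta \equiv 1 \bmod d$ follows from $\det\gamma = 1$ and $c \equiv 0$, so $a$ is invertible modulo $d$.

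It then remains to track the nebentypus: the lower-right entry of $\gamma_u$ equals $\delta - cu' \equiv \delta \bmod N/d$, so $\psi(\gamma_u) = \psi(\delta) = \psi(\gamma)$ as $\psi$ has modulus $N/d$. Applying $f\vert_k \gamma_u = \psi(\gamma_u)f$ and reindexing by $u'$ yields $(f\vert U_d)\vert_k \gamma = \psi(\gamma)(f\vert U_d)$. Holomorphicity on $\mathcal{H}$ is immediate, and vanishing at the cusps of $\Gamma_0(N/d)$ follows from the standard fact that the slash of a cusp form by any matrix in $\gl_2^+(\mathbb{R})$ vanishes at every cusp, so $f\vert U_d \in S_k(N/d,\psi)$. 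The main obstacle is precisely this level-lowering step in (1): both the membership $\gamma_u \in \Gamma_0(N)$ and the descent $\psi(\gamma_u) = \psi(\gamma)$ rely on the hypothesis $d^2 \mid N$, which guarantees $d \mid N/d$ and hence $c \equiv 0 \bmod d$; without it the factorization no longer lands in the right group and the argument collapses. Part (2) is by comparison routine coefficient bookkeeping.
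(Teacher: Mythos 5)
Your proof is correct and takes essentially the same approach as the paper: part (2) by the identical Fourier-coefficient comparison, and part (1) by the same factorization $\alpha_u\gamma = \gamma_u\alpha_{u'}$, using $d^2 \mid N$ to get $d \mid c$, $\gamma_u \in \Gamma_0(N)$, and $\psi(\gamma_u)=\psi(\gamma)$ since the lower-right entry is congruent to $\delta$ mod $N/d$. The only differences are that you spell out a few details the paper leaves implicit (the bijectivity of $u \mapsto u'$ used in reindexing, the vanishing at the cusps, and the deduction $T_p = U_p$ from $\psi(p)=0$), which are welcome but not a different method.
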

\begin{proof} 
	 Let  $\begin{pmatrix} x & y \\	z & w	\end{pmatrix} \in \Gamma_{0}(N/d)$ and 
	 $0 \leq u,u' < d$. Then
	\begin{align*}
	       \begin{pmatrix} 1 & u \\  0 & d  \end{pmatrix}  \begin{pmatrix} x & y \\ z & w\end{pmatrix} 
	       \begin{pmatrix}  1 & u' \\ 0 & d \end{pmatrix}^{-1} 
	       = \begin{pmatrix} x+uz & \frac{y+uw-(x+uz)u'}{d} \\ dz & w-zu' \end{pmatrix}.
	\end{align*}
	 We observe that  if $d^{2} \mid N$, then $d \mid z$ and $(x,d)=1$. So $x+uz$ is a unit in 
	 $\mathbb{Z}/d    \mathbb{Z}$.
     Hence, for every $0 \leq u< d$, there exists unique $u'$ mod $d$  such that $d \mid (y+uw-(x+uz)u')$. 
This implies that for every $u$ mod $d$ there exist unique $u'$ mod $d$ such that
  \begin{align*}
  \begin{pmatrix} x_{u} & y_{u} \\  z_{u} & w_{u} \end{pmatrix} \coloneqq
  \begin{pmatrix}  x+uz & \frac{y+uw-(x+uz)u'}{d} \\ dz & w-zu' \end{pmatrix} \in \Gamma_{0}(N). 
   \end{align*}    
  Therefore 
  \begin{align*}
          (f \vert U_{d}) \vert_{k} \begin{pmatrix}x & y \\  z & w \end{pmatrix} & =
           \sum\limits_{u \ \text{mod}\  d}f \vert_{k} \begin{pmatrix} x_{u} & y_{u} \\ z_{u} & w_{u} \end{pmatrix}   
           \begin{pmatrix} 1 & u' \\ 0 & d\end{pmatrix}\\
          &=\sum\limits_{u' \ \text{mod} \ d}^{} \psi(w_{u})f \vert_{k} 
          \begin{pmatrix} 1 & u' \\ 0 & d  \end{pmatrix}\\
          &=\psi(w) \sum\limits_{u' \ \text{mod} \ d}^{} f \vert_{k} 
          \begin{pmatrix}  1 & u' \\ 0 & d \end{pmatrix} \ \ \ (\because w_{u} \equiv w \ \mathrm{mod} \ (N/d)) \\
          &=\psi(w) (f \vert U_{d}). 
  \end{align*} 
Hence (1) follows. For the  second statement we compare the Fourier expansion of both sides. 
From  the definition of $U_{a}$, $V_{d}$ it follows that
\begingroup
\allowdisplaybreaks
  \begin{alignat*}{3}
        \sum\limits_{ad= n}\psi(d) d ^{k-1} f \vert U_{a} V_{d}(z) 
        & = \sum\limits_{d \mid n}\psi(d) d ^{k-1} (f\vert U_{n/d})(dz) \ \ (\mathrm{substituting \ } a=n/d)\\
        & = \sum\limits_{d \mid n}\psi(d) d ^{k-1} \sum_{m =1}^{\infty} a(mn/d)e(mdz) \\
        & = \sum\limits_{d \mid n,  d \mid m}\psi(d) d ^{k-1} \sum_{m =1}^{\infty} a(mn/d^{2})e(mz) \\
        & = \sum_{m=1}^{\infty} \Big ( \sum\limits_{d \mid (m, n)}\psi(d) d ^{k-1} a(mn/d^{2})  \Big )e(mz) \\
        & = ( T_{n}f)(z).
   \tag*{\qedhere}
  \end{alignat*}
  \endgroup
\end{proof}
\begin{definition}
	We call an element  $f \in S_{k}(N,\psi)$ a primitive cusp form of conductor $N$ if the following conditions are satisfied:
	\begin{enumerate}[label = {\emph{(\arabic*)}}]
		\item $f$ is an eigenform, i.e., $f(z)$ is an eigenvector for the Hecke operators $T_{n}$, for all $n \in \mathbb{N},$
		\item $a(1) = 1$, where $f(z) = \sum_{n=1}^{\infty} a(n)e(nz)$,
		\item $f$ is a newform, i.e., it is orthogonal to all (old)forms lying in the 
                      images of the maps $V_{d}: S_{k}(N/d, \psi) \rightarrow S_{k}(N,\psi)$, for $d \mid N$,
                      $C_\psi \mid (N/d)$, under $\langle \> , \> \rangle_N$. 
	\end{enumerate} 
\end{definition}
If $f \in S_{k}(N,\psi)$ is a primitive cusp form,  then $ T_{q}(f) =a(q)f$ and $f \vert U_{q'} = T_{q'}(f) = a(q')f$ 
for all $q \nmid N$ and $q' \mid N$ respectively. Hence, $f$ is uniquely determined by the 
eigenvalues of the Hecke operators $T_{n}$. Further, we also have the following:
\begin{description}
	\item[Euler Product] $L(s,f) = \sum\limits_{n=1}^{\infty}a(n)e(nz)= %\prod\limits_{q' \vert N}(1-a(q')q'^{-s})
	          \prod\limits_{q }^{}(1-a(q)q^{-s}+\psi(q)q^{k-1-2s}).$
	\item[Functional Equation] $ \Lambda_{N}(s;f) = i^{k} \Lambda_{N}(k-s;f \vert w_{N})$ 
	           where $\Lambda_{N}(s;f)=(2 \pi / \sqrt{N})^{-s} \Gamma(s)L(s,f).$
\end{description}
From the theory of newforms (see \cite[Theorem 4.6.15]{Miyake}) it follows that if 
$f \in S_{k}(N, \psi)$ is a primitive cusp form of conductor $C_{f}$, then 
\begin{equation}\label{rootnumber}
        f \vert w_{C_{f}} = \Lambda(f)f^{\rho},
\end{equation}  where $\Lambda(f)$ is called the root number associated to $f$. 

Let $g\in S_{k}(N,\omega)$ be a primitive cusp form of conductor $C_{g}$. If the conductor $C_{\chi}$ 
of the primitive Dirichlet character $\chi$ is coprime to $C_{g}$, 
then the twisted cusp form $g(\chi) \in S_{k}(C_{g}C_{\chi}^{2},\omega \chi^{2})$ \cite[Lemma 4.3.10 (2)]{Miyake} is primitive, and 
\begin{equation}\label{rootnumber2}
     	\Lambda(g(\chi))= \omega(C_{\chi}) \chi(C_{g})\frac{G(\chi)^{2}}{C_{\chi}}\Lambda(g), 
\end{equation}
where
\begin{align*}
    	G(\chi) = \sum\limits_{u \ \mathrm{mod} \ C_{\chi}} \chi{(u)} e^{2 \pi i u / C_{\chi}},
\end{align*}
is the Gauss sum \cite[Theorem 4.3.11]{Miyake}. 

\subsection{Rankin-Selberg convolution}

The proof of Theorem \ref{maintheorem}  makes constant use of the classical Rankin-Selberg method (see \cite{RankinSelberg}, 
\cite{Ranscalar}). For the sake of completeness, we  recall a few consequences  of the Rankin-Selberg method in this section. 
Let $f \in S_{k}(N,\psi)$, $g\in S_{l}(N,\omega)$ be primitive cusp forms. Let  $\alpha(q)$, $\alpha'(q)$, $\beta(q)$, $\beta'(q)$ 
be as  in   \eqref{alpha,beta} for all $q \nmid N$. Put $\alpha(q) = a(q)$ and $\beta(q) = b(q)$ and $\alpha'(q)= \beta'(q)=0$ 
for all $q \mid  N$. Then the $L$-function associated to $f$ and $g$ has the Euler product
\begin{equation}\label{2.8}
       \begin{split}
           L(s,f)= \sum\limits_{n=1}^{\infty} a(n)n^{-s}= \prod\limits_{q} 
           [(1-\alpha(q)q^{-s})(1-\alpha'(q)q^{-s})] ^{-1}, \\
            L(s,g)=\sum\limits_{n=1}^{\infty} b(n)n^{-s}= \prod\limits_{q} 
            [(1-\beta(q)q^{-s})(1-\beta'(q)q^{-s})] ^{-1}.
       \end{split}
\end{equation}
Before we state the result we introduce another class of Eisenstein series which are different from 
\eqref{Eisenstein series}. For every Dirichlet character $\psi$ mod $N$, we set
\begin{eqnarray}
 \label{E_{k,N}}
 E_{k,N}(z,s,\psi)= y^{s} \sideset{}{'} \sum\limits_{c,d =- \infty} ^ 
 {\infty}   \psi(d) (cNz+d)^{-k} \vert cNz+d \vert ^{-2s}.
\end{eqnarray}
Let $f \in S_{k}(N,\psi)$ and $g\in S_{l}(N,\omega)$ be primitive cusp forms as in the Introduction 
(so $l<k$) and let $\mathcal{D}(s,f,g)$ be as in \eqref{1.2}. 
Then the Rankin-Selberg method states that
\begin{enumerate}[label={(\arabic*)}]
	\item{The Rankin product $L$-function has the Euler product  
	 \begin{equation}\label{2.11}
		\begin{split}
		\mathcal{D}(s,f,g)= \prod_{q}   [  (1- & \alpha(q)\beta(q)q^{-s}) (1-\alpha(q)\beta ' (q)q^{-s}) \\
		  & \times (1-\alpha ' (q)\beta(q)q^{-s}) (1-\alpha '(q)\beta '(q)q^{-s}) ] ^{-1}.
		\end{split}
	\end{equation} }
    \item{For $s \in \mathbb{C}$ with Re$(s) > 1+\frac{k+l}{2}$, the Rankin product $L$-function 
         $\mathcal{D}(s,f,g)$ has the integral representation given by
         \begin{equation}\label{2.12}
	       2 (4 \pi )^{-s} \Gamma(s)\mathcal{D}(s,f,g)= 
	       \langle f^{\rho},gE_{k-l,N}(z,s-k+1,\psi\omega) \rangle_{N} .
       \end{equation} }
\end{enumerate}
We now state an algebraicity result for the Rankin product $L$-function which is crucial for the 
construction of the $p$-adic Rankin product $L$-function, due to Shimura.
\begin{theorem} \label{Algebraicity}  \emph{(\cite[Theorem 4]{Shi_76}, \cite[$\S$10.2, Corollary 1]{hida_93}) }
	Let $f \in S_{k}(N,\psi)$ and $g \in S_{l}(N,\omega)$ be primitive cusp forms of conductor $C_{f}$ and $C_{g}$ 
	respectively. Then for every Dirichlet character $\chi$ and for all integers $s$ with $l \leq s \leq k-1$, we have 
	\begin{align}
		\frac{\Psi(s,f,g(\chi))}{\pi ^{1-l} \langle f , f \rangle_{C_{f}} } \in \overline{\mathbb{Q}}.
	\end{align}
\end{theorem}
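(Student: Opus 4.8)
The plan is to follow Shimura's strategy: convert the critical value into a Petersson inner product by the Rankin--Selberg integral \eqref{2.12}, identify the Eisenstein factor at the critical point as a \emph{nearly holomorphic} modular form with algebraic Fourier coefficients, and then invoke the algebraicity of Petersson products of such forms against a fixed Hecke eigenform. Since $g(\chi)$ is again a primitive cusp form of weight $l$ and level $C_gC_\chi^2$ with Fourier coefficients in $\overline{\mathbb{Q}}$ (by \cite[Lemma 4.3.10]{Miyake}), it suffices to treat $\Psi(s,f,g)$ for a primitive second form $g$ of weight $l<k$ and then apply the result with $g$ replaced by $g(\chi)$ at the appropriate level. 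Writing $s=l+r$ with $0\le r\le k-l-1$ and substituting $\gamma(s)=(2\pi)^{-2s}\Gamma(s)\Gamma(s-l+1)$ from \eqref{1.4} into $\Psi=\gamma\mathcal{D}$, the factor $\Gamma(s)$ coming from \eqref{2.12} cancels and the powers of $2$ collapse, leaving
\begin{equation*}
\Psi(l+r,f,g)=\frac{r!}{2}\,\pi^{-l-r}\,\bigl\langle f^{\rho},\,g\,E_{k-l,N}(z,\,l+r-k+1,\,\psi\omega)\bigr\rangle_{N}.
\end{equation*}
Everything thus reduces to the Petersson product on the right at the non-positive integer argument $m:=l+r-k+1\le 0$.

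First I would analyse the Eisenstein factor. At these integer points the non-holomorphic series $E_{k-l,N}(z,m,\psi\omega)$ is no longer holomorphic, but it is nearly holomorphic: it arises by applying a power of the Shimura--Maass raising operator $\delta_\kappa=\frac{1}{2\pi i}\bigl(\frac{\partial}{\partial z}+\frac{\kappa}{2iy}\bigr)$ to a holomorphic Eisenstein series. Because the Fourier coefficients of holomorphic Eisenstein series are built from generalized Bernoulli numbers and lie in $\overline{\mathbb{Q}}$ once one factors out the transcendental normalization $(2\pi i)^{\kappa}$, and because each $\delta_\kappa$ acts on $q$-expansions by an explicit arithmetic operation up to a power of $2\pi i$, one finds that $\pi^{-(1+r)}E_{k-l,N}(z,m,\psi\omega)$ is a nearly holomorphic form of weight $k-l$ with algebraic coefficients; here the exponent $1+r=(k-l)+m$ is exactly the power of $\pi$ produced by the critical normalization. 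Multiplying by $g$, whose coefficients $b(n)$ are algebraic, the product $g\,E_{k-l,N}(z,m,\psi\omega)$ equals $\pi^{1+r}$ times a nearly holomorphic modular form of weight $k$ and level $N$ with algebraic Fourier coefficients.

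Next I would remove the non-holomorphicity. Since $f^{\rho}$ is holomorphic, its Petersson product against a nearly holomorphic form equals its product against the holomorphic projection of that form, and holomorphic projection carries nearly holomorphic forms with algebraic coefficients to honest cusp forms $h\in S_k(N,\overline\psi)$ with algebraic Fourier coefficients, the projection being a finite arithmetic operation on $q$-expansions. Thus $\langle f^{\rho},gE\rangle_N=\pi^{1+r}\langle f^{\rho},h\rangle_N$ with $h$ having coefficients in $\overline{\mathbb{Q}}$. The ratio $\langle f^{\rho},h\rangle_N/\langle f^{\rho},f^{\rho}\rangle_N$ is then algebraic by Hecke theory: $f^{\rho}$ spans a one-dimensional eigenspace, so a suitable polynomial in the Hecke operators (which preserve the $\overline{\mathbb{Q}}$-structure on Fourier coefficients) projects $h$ onto $c\,f^{\rho}$ with $c\in\overline{\mathbb{Q}}$, whence $\langle f^{\rho},h\rangle_N=c\,\langle f^{\rho},f^{\rho}\rangle_N$. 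Finally $\langle f^{\rho},f^{\rho}\rangle_N=\langle f,f\rangle_N$, and passing between levels $N$ and $C_f$ only introduces the rational index $[\Gamma_0(C_f):\Gamma_0(N)]$, so $\langle f^{\rho},gE\rangle_N\in\pi^{1+r}\,\overline{\mathbb{Q}}\cdot\langle f,f\rangle_{C_f}$.

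Assembling the powers of $\pi$ gives $\Psi(l+r,f,g)\in\pi^{-l-r}\cdot\pi^{1+r}\,\overline{\mathbb{Q}}\cdot\langle f,f\rangle_{C_f}=\pi^{1-l}\,\overline{\mathbb{Q}}\cdot\langle f,f\rangle_{C_f}$, which is the claim; replacing $g$ by $g(\chi)$ changes nothing, since $g(\chi)$ is primitive with algebraic coefficients and one runs the identical argument at the level $\mathrm{lcm}(C_f,C_gC_\chi^2)$ with Eisenstein character $\psi\omega\chi^2$. I expect the main obstacle to be the middle step: verifying precisely that the Eisenstein series at the critical point is $\pi^{1+r}$ times a nearly holomorphic form with algebraic coefficients, and that holomorphic projection preserves this algebraicity with exactly this power of $\pi$. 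This is where the arithmetic of the Maass--Shimura operators and the generalized Bernoulli numbers must be tracked with care, and it is the genuine content underlying Shimura's theorem; by comparison the Hecke-theoretic Petersson algebraicity and the level bookkeeping are routine.
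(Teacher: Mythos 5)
Note first that the paper offers no proof of Theorem~\ref{Algebraicity}: it is quoted from \cite[Theorem 4]{Shi_76} and \cite[\S 10.2, Corollary 1]{hida_93}, so your attempt can only be measured against the classical arguments and against the machinery the paper itself sets up in \S 2 and \S 5. Your skeleton is indeed Shimura's, and your bookkeeping is correct as far as it goes: combining \eqref{1.4} with \eqref{2.12} gives exactly $\Psi(l+r,f,g)=\frac{r!}{2}\,\pi^{-l-r}\langle f^{\rho},\, g\,E_{k-l,N}(z,l+r-k+1,\psi\omega)\rangle_{N}$, and $1+r$ is the correct power of $\pi$ carried by the Eisenstein factor \emph{whenever} that factor really is a Maass--Shimura transform of a holomorphic Eisenstein series. (A minor point: $g(\chi)$ need not be primitive for arbitrary $\chi$; but only the algebraicity of its coefficients matters, so this is harmless.)

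The genuine gap is that this last description is unavailable on roughly half of the critical strip, and your proposal contains no mechanism to replace it there. By \eqref{2.18}, $E_{k-l}(z,m;\cdot,\cdot)$ at $m=s-k+1$ is $\delta^{\,k-1-s}$ applied to an Eisenstein series of weight $2s-k-l+2$; this base weight is $\geq 3$ only for $s>(k+l)/2$ (which is why Proposition~\ref{prop2.9} carries the hypothesis $r<k/2-1$ in its notation), it is $\leq 2$ for $l\leq s\leq (k+l)/2$, and it is $\leq 0$ for $s\leq (k+l)/2-1$, where no holomorphic Eisenstein series exists at all. In that lower range the depth of $g\cdot E$ can also exceed the bound $r<k/2$ required by Lemma~\ref{lemma2.10}, so the structure-theoretic holomorphic projection of Theorem~\ref{thm2.8} that you invoke is not available either; the paper explicitly flags exactly this problem in \S 5 (``for $l\leq s\leq (k+l)/2$ it is not clear (to us) that $K^{*}(s-k+1)$ is a nearly holomorphic form'') and circumvents it, for its own distributions, by applying Zagier's Lemma~\ref{hol} to the rapidly decreasing function $g\cdot E$ instead of the nearly holomorphic machinery. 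Moreover, in the lower range the constant term of the Eisenstein series (the $C(s)$, $D(s)$ of Theorem~\ref{thm5.2}) feeds, through the convolution defining the Fourier coefficients of $g\cdot E$, Dirichlet $L$-values at integer arguments $2s-k-l+2\leq 2$ whose powers of $\pi$ do not match $\pi^{1+r}$; in the paper's $p$-adic construction this contribution vanishes because $\chi(n_{1})=0$ for $p\mid n_{1}$, but for Shimura's theorem ($\chi$ arbitrary, e.g.\ trivial) it does not, and neutralizing it requires either a parity/trivial-zero analysis of those $L$-values, which you never perform, or the functional equation $s\leftrightarrow k+l-1-s$, whose constants (root numbers, Gauss sums, powers of $N$) are algebraic and which reduces the lower half of the critical strip to the upper half. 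Some such device appears in every complete proof, including Shimura's and Hida's; without one, your argument establishes the theorem only for $(k+l)/2<s\leq k-1$.
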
 

\subsection{Nearly holomorphic modular forms}

In this section we recall some facts about nearly holomorphic modular forms due to Shimura (see \cite[\S 10.1]{hida_93}).
 
%We state few results about holomorphic projection operator which we need later. We  begin by defining  
The Maass-Shimura  differential operator of weight $k\in \mathbb{C}$  on $C^{\infty}$-functions on $\mathcal{H}$ is the operator:
\begin{equation}
\delta_{k}= \frac{1}{2\pi i} \Big ( \frac{k}{2iy} + \frac{\partial }{\partial z} \Big ),   \ \ \mathrm{where} \ \  z= x+iy, \ 
\frac{\partial }{\partial z} =  \frac{1}{2}\Big ( \frac{\partial }{\partial x} - i\frac{\partial }{\partial y} \Big ). 
\end{equation}
For every positive integer $r$, we define $\delta_{k}^{r}:= \delta_{k+2r-2} \circ \cdots \circ \delta_{k+2}\circ \delta_{k}$ 
 and $\delta_{k}^{0}f = f$. Let $d:=\frac{1}{2 \pi i}\frac{\partial} {\partial z}$.
The Maass-Shimura differential operator satisfies the following properties:
\begin{enumerate}[label={(\arabic*)}]
	\item{$\delta_{k+s}(fg) = (\delta_{k}f)g+f(\delta_{s}g)= (\delta_{s}f)g+f(\delta_{k}g), \ \forall \     
	      s,k \in \mathbb{C}$,} 
	\item $\delta_{k}(f)= y^{-k}d(y^{k}f), \ \forall \ k \in \mathbb{C}$,
	\item $\delta_{k}^{r}= \delta_{k+2}^{r-1} \circ \delta_{k},$
	\item $\delta_{k}^{r}(f)= \sum\limits_{j=0}^{r} \binom{r}{j} \frac{\Gamma(r+k)}{\Gamma(j+k)} 
	      (-4\pi y)^{j-r}d^{j}f, \ \forall \ k \in \mathbb{C}, \ r \in \mathbb{N}.$
\end{enumerate}
\begin{definition}\label{Definition nearly holomorphic}
 Let $k$, $r$ be non-negative integers.  A function $f: \mathcal{H}\rightarrow \mathbb{C}$   
 is said to be a nearly holomorphic modular form of weight $k$ and depth less than or equal to $r$ for the 
 congruence subgroup $\Gamma$, if the following hold:
\begin{enumerate}[label={\emph{(\arabic*)}}]
 \item $f$ is smooth as a function of $x$ and $y$,
 \item $f\vert_{k}\gamma$ = $f$, \ for all $\gamma \in \Gamma$,
 \item there exist holomorphic functions $f_{0},\ldots,f_{r}$ on $\mathcal{H}$ such that 
            $ f(z) = \sum_{j=0}^{r} (4\pi y)^{-j} f_{j}(z)$,
\item f is slowly increasing, i.e.,  for every  $\alpha \in \SL_{2}(\mathbb{Z})$, there exists positive real 
numbers $A$ and $B$ such that $\lvert (f \vert_{k}\alpha)(z) \rvert \leq A (1+y^{-B})$ as $y \rightarrow \infty$.
\end{enumerate}
\end{definition}
%\begin{fact}\label{fact}
	%The condition (4) of Definition \ref*{Definition nearly holomorphic}  holds if and only if for every $\alpha \in \SL_{2}(\mathbb{Z})$ there exists $a_{j,n,\alpha} \in \mathbb{C}$ and positive integer $N_{\alpha}$ such that $(f \vert_{k}\alpha)(z)= \sum_{j=0}^{r} (4\pi y)^{-j} \sum_{n=0}^{\infty}a_{j,n,\alpha} e(nz/N_{\alpha}). $
%\end{fact}

The space of nearly holomorphic modular forms of  weight $k$ and depth less than or equal to $r$ 
for the congruence subgroup $\Gamma$ is denoted by $\mathcal{N}_{k}^{r}(\Gamma)$. It is clear 
that for $r =0$ we obtain the space of (holomorphic) modular forms $M_{k}(\Gamma)$. Let 
$\mathcal{N}_{k} (\Gamma )= \cup_{r=0}^{\infty} \mathcal{N}_{k}^{r}( \Gamma )$, then 
$\oplus_{k=0}^{\infty}\mathcal{N}_{k}(\Gamma)$ is a graded $\mathbb{C}$-algebra. Further, let 
$\mathcal{N}_{k}^{r}(N,\chi) = \lbrace f \in  \mathcal{N}_{k}^{r}(\Gamma_{1}(N)) \mid 
(f\vert_{k}\gamma)(z) = \chi(\gamma)f(z), \forall \ \gamma \in \Gamma_{0}(N)\rbrace$. 

We say a function $h \in \mathcal{N}_{k}^{r} (\Gamma)$  is rapidly decreasing if for every 
$B \in \mathbb{R}$ and $\alpha \in \SL_{2}(\mathbb{Z})$, there exists a positive constant $A$ such 
that  $\lvert (h \vert_{k}\alpha)(z) \rvert \leq A (1+y^{B})$ as $y \rightarrow \infty$. We denote the 
subspace of  rapidly decreasing functions in $\mathcal{N}_{k}^{r}(\Gamma)$, $\mathcal{N}_{k}^{r}(N,\chi)$ 
and $\mathcal{N}_{k}(\Gamma)$ by $\mathcal{NS}_{k}^{r}(\Gamma)$, $\mathcal{NS}_{k}^{r}(N,\chi)$ and
 $\mathcal{NS}_{k}(\Gamma)$ respectively (cf. Lemma~\ref{Lemmabounds}).

\begin{lemma}\label{Mass-Shimura Operator}
   If $h: \mathcal{H} \rightarrow \mathbb{C}$ is a $C^\infty$-function, then 
   $(\delta_{k}^{r}h)\vert_{k+2r} \gamma  = \delta_{k}^{r}(h\vert_{k}\gamma) $,  
   for all $\gamma \in \gl_{2}^{+}(\mathbb{R}).$
\end{lemma}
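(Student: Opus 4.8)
The plan is to induct on $r$, reducing everything to the single weight-raising step $r=1$. The base case $r=0$ is immediate, since $\delta_k^0$ is the identity and the weight-$k$ slash action of $h$ is unchanged. For the inductive step I would invoke property (3), namely $\delta_k^r = \delta_{k+2}^{r-1}\circ\delta_k$. Applying the $r=1$ case to $h$ gives $\delta_k(h\vert_k\gamma) = (\delta_k h)\vert_{k+2}\gamma$; feeding this into $\delta_{k+2}^{r-1}$ and applying the inductive hypothesis to the $C^\infty$-function $\delta_k h$ in weight $k+2$ yields
$$\delta_k^r(h\vert_k\gamma) = \delta_{k+2}^{r-1}\big((\delta_k h)\vert_{k+2}\gamma\big) = \big(\delta_{k+2}^{r-1}\delta_k h\big)\vert_{(k+2)+2(r-1)}\gamma = (\delta_k^r h)\vert_{k+2r}\gamma,$$
as desired. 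Thus the entire content lies in the base step $r=1$.

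For the $r=1$ identity I would compute both sides directly. Write $\gamma = \begin{psmallmatrix} a & b\\ c& d\end{psmallmatrix}$, $\Delta = \det\gamma$, and record the two standard transformation facts: $\mathrm{Im}(\gamma z) = \Delta y/\lvert cz+d\rvert^2$, and, since $z\mapsto\gamma z$ is holomorphic, the chain rule for Wirtinger derivatives gives $\frac{\partial}{\partial z}\big(h(\gamma z)\big) = \frac{\partial h}{\partial w}(\gamma z)\cdot \Delta/(cz+d)^2$ (the antiholomorphic part drops out because $\gamma z$ is holomorphic in $z$). Expanding $\delta_k\big((h\vert_k\gamma)(z)\big) = \frac{1}{2\pi i}\big(\tfrac{k}{2iy}+\tfrac{\partial}{\partial z}\big)\big(\Delta^{k/2}(cz+d)^{-k}h(\gamma z)\big)$ with the product rule produces three terms: a $\tfrac{k}{2iy}$-term, a $-kc(cz+d)^{-k-1}$-term from differentiating $(cz+d)^{-k}$, and the holomorphic-derivative term carrying $\Delta/(cz+d)^{k+2}$.

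On the other side, $\big((\delta_k h)\vert_{k+2}\gamma\big)(z) = \Delta^{(k+2)/2}(cz+d)^{-(k+2)}(\delta_k h)(\gamma z)$, and substituting $\mathrm{Im}(\gamma z) = \Delta y/\lvert cz+d\rvert^2$ into the $\tfrac{k}{2i\,\mathrm{Im}}$ factor produces exactly two terms. The holomorphic-derivative contributions match term-by-term after writing $\lvert cz+d\rvert^2 = (cz+d)(c\bar z+d)$ and tracking the powers of $\Delta$. The only genuine check is that the two remaining terms proportional to $h(\gamma z)$ agree, which after clearing $k(cz+d)^{-k-1}$ reduces to $\tfrac{1}{2iy}(cz+d)-c = \tfrac{1}{2iy}(c\bar z+d)$, i.e. to $c(z-2iy)=c\bar z$. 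This is the heart of the matter, and it holds by the elementary identity $z-2iy=\bar z$ for $z=x+iy$.

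The main obstacle is purely bookkeeping: correctly tracking the non-holomorphic factor $\tfrac{k}{2iy}$ and the $\bar z$ introduced through $\lvert cz+d\rvert^2$, so that the $1/y$-terms cancel. There is no analytic subtlety, since $h$ is assumed $C^\infty$ and $\gamma z$ is holomorphic in $z$; everything reduces to the chain rule together with the single algebraic identity $z-2iy=\bar z$. Alternatively, one could phrase the $r=1$ step via property (2), $\delta_k f = y^{-k}\,d(y^k f)$, but the direct computation above seems cleanest.
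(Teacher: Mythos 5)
Your proposal is correct and follows essentially the same route as the paper: an induction on $r$ via $\delta_k^r = \delta_{k+2}^{r-1}\circ\delta_k$ (stated for all weights so the inductive hypothesis applies to $\delta_k h$ in weight $k+2$), followed by a direct product-rule/chain-rule computation for $r=1$ whose crux is exactly the identity $cz+d-2icy = c\bar z+d$, i.e.\ $z-2iy=\bar z$, that the paper also uses to combine the two terms proportional to $h(\gamma z)$. If anything, your version is slightly more careful than the paper's, since you track the factors of $\Delta=\det\gamma$ explicitly, whereas the paper's displayed computation suppresses them (in effect working in $\SL_2(\mathbb{R})$).
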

\begin{proof}
	   Observe that by induction on $r$, it is enough to prove the lemma for $r=1$ and for all 
	   $k \in \mathbb{C}$. So, it is enough to show
	   $$
	          (\delta_{k}h)\vert_{k+2} \gamma = \  \delta_{k}(h\vert_{k}\gamma).
	   $$
	   For $\gamma= \begin{pmatrix}  a & b \\c & d \end{pmatrix} \in \gl_{2}^{+}(\mathbb{R})$,
	   the left hand side is given by 
	   \begin{align*}
	       ((\delta_{k} h)\vert_{k+2}\gamma )(z) \  
	         & = \ \frac{1}{2\pi i} \Big ( \Big ( \frac{kh}{2i\mathrm{Im}(z)} 
	                + \frac{\partial h}{\partial z} \Big )\Big \vert_{k+2}  \gamma \Big )(z) \\
	        & = \ \frac{1}{2\pi i}  \Big ((cz+d)^{-k-2} \frac{k}{2i\mathrm{Im}(\gamma z)} h(\gamma z) 
	          + (cz+d)^{-k-2}\frac{\partial h}{\partial z} (\gamma z)\Big ) \\
	       & = \ \frac{1}{2\pi i}  \Big ((cz+d)^{-k-2}  \vert cz+d \vert ^{2}\frac{k}{2iy} h(\gamma z) 
	         + (cz+d)^{-k-2}\frac{\partial h}{\partial  z}(\gamma z) \Big ).
	   \end{align*}
	   The right hand side is given by 
	   \begin{align*}
	   \delta_{k}(h\vert_{k}\gamma) (z) \ & = \delta_{k}((cz+d)^{-k}h(\gamma z)) \\
	   &= \frac{1}{2\pi i} \Big ( \frac{k}{2iy} + \frac{\partial }{\partial z} \Big ) ((cz+d)^{-k}h(\gamma z)) \\
	   & =\frac{1}{2\pi i} \Big ( \frac{k}{2iy} (cz+d)^{-k}h(\gamma z) -ck(cz+d)^{-k-1}h(\gamma z)
	        + (cz+d)^{-k-2}\frac{\partial h}{\partial z}(\gamma z)\Big ) \\	
	   & = \ \frac{1}{2\pi i}  \Big ( \frac{k}{2iy} (cz+d)^{-k-1}  h(\gamma z) (cx+ciy+d-2ciy)
	        + (cz+d)^{-k-2}\frac{\partial h}{\partial  z}(\gamma z) \Big ) \\
	   & = \ \frac{1}{2\pi i}  \Big ( \frac{k}{2iy} (cz+d)^{-k-2}  \vert cz+d \vert ^{2} h(\gamma z) 
	        + (cz+d)^{-k-2}\frac{\partial h}{\partial  z}(\gamma z) \Big ), 
	   \end{align*}
	   which proves that $(\delta_{k}h)\vert_{k+2} \gamma = \  \delta_{k}(h\vert_{k}\gamma)$ and completes the proof.
\end{proof}
Let $h : \mathcal{H} \rightarrow \mathbb{C}$ be a holomorphic function such that $h(z)=\sum_{n=0}^{\infty} a(n) e(nz/N)$.  
%Further,  assume that the function $h$ is holomorphic at $\infty$.  
Then $e(-z/N)(h(z) -a_{0})$ is holomorphic on $\mathcal{H} \cup \lbrace \infty \rbrace$.  
%Therefore $\lvert h(z) \rvert \leq \lvert a_{0} \rvert + C e^{-2\pi  y/N}$ as $y \rightarrow \infty$. 
%Thus we have that proved for every holomorphic function $h:\mathcal{H} \cup \lbrace \infty\rbrace \rightarrow \mathbb{C}$, 
Thus, there exists a positive real number $C$ such that
\begin{align}\label{slowly increasing}
      \lvert h(z) \rvert \leq \lvert h(\infty) \rvert + C e^{-2\pi  y/N} \ \mathrm{as} \ y \rightarrow \infty.
\end{align}
\begin{proposition} \label{prop2.6}
For $k$, $r \in \mathbb{N}$, the operator $\delta_k^r$ induces a linear map of $\mathbb{C}$-vector spaces
 $\delta_{k}^{r}:M_{k}(\Gamma) \rightarrow \mathcal{N}_{k+2r}^{r}(\Gamma)$.
\end{proposition}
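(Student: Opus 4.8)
The plan is to fix $f \in M_{k}(\Gamma)$ and verify the four defining conditions of Definition~\ref{Definition nearly holomorphic} for the function $\delta_{k}^{r}f$, regarded as a candidate element of $\mathcal{N}_{k+2r}^{r}(\Gamma)$. Linearity of the assignment $f \mapsto \delta_{k}^{r}f$ is immediate, since $\delta_{k}^{r}= \delta_{k+2r-2}\circ\cdots\circ\delta_{k}$ is a composite of the $\mathbb{C}$-linear operators $\delta_{k+2j}$. Smoothness (condition (1)) is equally clear: $f$ is holomorphic, hence $C^{\infty}$, and each $\delta_{k+2j}=\frac{1}{2\pi i}\big(\frac{k+2j}{2iy}+\frac{\partial}{\partial z}\big)$ has coefficients that are smooth on $\mathcal{H}$, where $y>0$, so the iterated application stays $C^{\infty}$.

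For the automorphy condition (2), I would invoke Lemma~\ref{Mass-Shimura Operator}: for every $\gamma\in\Gamma$ we have $(\delta_{k}^{r}f)\vert_{k+2r}\gamma = \delta_{k}^{r}(f\vert_{k}\gamma)=\delta_{k}^{r}f$, using $f\vert_{k}\gamma=f$. For the depth condition (3), I would read the decomposition off directly from property (4) of the Maass-Shimura operator. Since $d=\frac{1}{2\pi i}\frac{\partial}{\partial z}$ preserves holomorphy, the formula $\delta_{k}^{r}f=\sum_{j=0}^{r}\binom{r}{j}\frac{\Gamma(r+k)}{\Gamma(j+k)}(-4\pi y)^{j-r}d^{j}f$ can be reindexed by $i=r-j$ into $\sum_{i=0}^{r}(4\pi y)^{-i}f_{i}$, where each $f_{i}$ is a fixed scalar multiple of the holomorphic function $d^{r-i}f$. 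This exhibits $\delta_{k}^{r}f$ as nearly holomorphic of depth at most $r$.

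The remaining condition (4), slow increase, is where the real work lies. Given $\alpha\in\SL_{2}(\mathbb{Z})$, I would again use Lemma~\ref{Mass-Shimura Operator} to write $(\delta_{k}^{r}f)\vert_{k+2r}\alpha=\delta_{k}^{r}(f\vert_{k}\alpha)$, and then expand via property (4) into a sum of terms $(4\pi y)^{j-r}d^{j}(f\vert_{k}\alpha)$. Because $f$ is holomorphic at every cusp, $f\vert_{k}\alpha$ admits a Fourier expansion $\sum_{n\geq 0}a_{\alpha}(n)e(nz/N)$ at $\infty$; differentiating term by term shows $d^{j}(f\vert_{k}\alpha)=\sum_{n\geq 0}a_{\alpha}(n)(n/N)^{j}e(nz/N)$ is of the same shape, with vanishing constant term once $j\geq 1$. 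The estimate \eqref{slowly increasing} then bounds each $d^{j}(f\vert_{k}\alpha)$ as $y\to\infty$ -- by a constant when $j=0$ and by a term of size $e^{-2\pi y/N}$ when $j\geq 1$. Since $j-r\leq 0$, the polynomial factors $y^{j-r}$ remain bounded for $y\geq 1$, so the whole sum is bounded as $y\to\infty$ and in particular satisfies a bound of the form $A(1+y^{-B})$. This confirms condition (4) and completes the verification.

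I expect condition (4) to be the main obstacle: one must combine the explicit formula for $\delta_{k}^{r}$ with the cuspidal estimate \eqref{slowly increasing}, keeping careful track of how $d^{j}$ kills the constant term for $j\geq 1$ and how the factors $y^{j-r}$ behave at the cusp after conjugating by an arbitrary $\alpha\in\SL_{2}(\mathbb{Z})$. The other three conditions are essentially formal consequences of Lemma~\ref{Mass-Shimura Operator} together with property (4) of the Maass-Shimura operator.
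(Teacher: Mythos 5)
Your proposal is correct and follows essentially the same route as the paper's proof: linearity is formal, conditions (1)--(3) are read off from Lemma~\ref{Mass-Shimura Operator} and the explicit expansion $\delta_{k}^{r}(f)= \sum_{j=0}^{r} \binom{r}{j} \frac{\Gamma(r+k)}{\Gamma(j+k)} (-4\pi y)^{j-r}d^{j}f$, and condition (4) is checked by slashing with $\alpha$, expanding, and applying the estimate \eqref{slowly increasing} to each $d^{j}(f\vert_{k}\alpha)$. Your treatment of (4) is in fact slightly more careful than the paper's, since you justify explicitly that term-by-term differentiation of the Fourier expansion of $f\vert_{k}\alpha$ preserves the form needed to invoke \eqref{slowly increasing} and kills the constant term for $j \geq 1$.
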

\begin{proof}
%Observe that by induction on $s$, it is enough to prove the proposition  for $s=1$ for all $k$. We show that if $f \in \mathcal{N}_{k}^{r} (\Gamma)$, then $\delta(f)\in \mathcal{N}_{k+2}^{r+1} (\Gamma)$. 
Clearly $\delta_{k}^{r}$ is $\mathbb{C}$-linear. So it is enough to show $\delta_{k}^{r}(f) \in \mathcal{N}_{k+2r}^{r}(\Gamma), \ \forall  \ f \in M_{k}(\Gamma)$. 
Let $f\in M_{k}(\Gamma)$.  
%Further, assume that for every $\alpha \in \SL_{2}(\mathbb{Z}),  f\vert_{k} \alpha $  has the Fourier expansion given by $\sum_{n=0}^{\infty}a(n,f\vert_{k} \alpha) e(nz/N)$.
Recall that
$$
     \delta_{k}^{r}(f)= \sum\limits_{j=0}^{r} \binom{r}{j} \frac{\Gamma(r+k)}{\Gamma(j+k)} (-4\pi y)^{j-r}d^{j}f.
 $$
  Clearly $d^{j}f$ is holomorphic and $y^{j-r}$ is smooth. Hence, $\delta_{k}^{r}(f)$ satisfies (1) and (3) of Definition \ref{Definition nearly holomorphic}.  From  Lemma \ref{Mass-Shimura Operator},  it follows that 
  \begin{align}\label{Propn2.8 eq1}
  (\delta_{k}^{r}f)\vert_{k+2r} \gamma =   \delta_{k}^{r}(f\vert_{k}\gamma) =\delta_{k}^{r}(f), \mathrm{ \ for \ all } \ \gamma \in \Gamma,
  \end{align}
hence (2) also holds.  
It remains to check that  $\delta_{k}^{r}f$ is slowly increasing. If $\alpha \in \SL_{2}(\mathbb{Z})$, then $f \vert_k \alpha$ is also $C^\infty$, so  
  $$
        (\delta_{k}^{r}f)\vert_{k+2r} \alpha= \delta_{k}^{r}(f\vert_{k}\alpha)= \sum\limits_{j=0}^{r} \binom{r}{j} \frac{\Gamma(r+k)}{\Gamma(j+k)} (-4\pi y)^{j-r}d^{j}(f\vert_{k}\alpha).
  $$
Note that the $(-4 \pi y)^{j-r}$ are bounded as $y \rightarrow \infty$ and the $d^{j}(f\vert_{k}\alpha)$ are holomorphic.
It follows from \eqref{slowly increasing} that, for every $0\leq j \leq r$, there exists  
positive numbers $A_{j},B_{j}$ such that  $\lvert (-4\pi y)^{j-r}d^{j}(f\vert_{k}\alpha)\rvert 
\leq A_{j}(1+e^{-2 \pi y / B_{j}}) $  as $y \rightarrow \infty$. 
Since $e^{-y}$ decays faster than  $y^{-n}$ for any $n \geq 0$ as $y \rightarrow \infty$, we have
$\lvert (\delta_{k}^{r}f)\vert_{k+2r} \alpha \rvert \leq A_{\alpha} (1+y^{-B_{\alpha}})$ as $y \rightarrow \infty$ 
for some positive numbers $A_{\alpha}$, $B_{\alpha}$. 
%Let $\SL_{2}(\mathbb{Z}) = \coprod \Gamma \alpha_{i}$ and  $A = \max  A_{\alpha_{i}}, \ B = \min B_{\alpha_{i}}$. If $\alpha \in \Gamma \alpha_{i}$, then it follows from \eqref{Propn2.8 eq1}   that $(\delta_{k}^{r}f)\rvert_{k+2r} \alpha =(\delta_{k}^{r}f)\vert_{k+2r} \alpha_{i}$. Therefore,  $\lvert (\delta_{k}^{r}f)\rvert_{k+2r} \alpha \rvert = \lvert (\delta_{k}^{r}f)\vert_{k+2r} \alpha_{i} \rvert \leq A (1+y^{-B})$ as $y \rightarrow \infty$. Since $i$ is arbitrary, this shows  that $f$ is slowly increasing and finishes the proof.
%From the Fourier expansion of $f\vert_{k}\alpha$, it follows that for  every integer $j\geq0$, $d^{j}(f\vert_{k} \alpha)= \delta_{j,0} a(0,f\vert_{k}\alpha)  + \sum_{n=1}^{\infty} a(n,f\vert_{k} \alpha)(2\pi i n/N)^{j} e(nz/N).$ Since, $d^{j}(f)$ is holomorphic on some disk $D_{\epsilon}= \lbrace z \in \mathbb{C} \mid \lvert z \rvert \leq \epsilon\rbrace$ it follows that $ e(-z/N)(d^{j}f-\delta_{j,0} a(0,f\vert_{k}\alpha))$ is bounded on $D_{\epsilon}.$ Hence, $\lvert (d^{j}f\vert_{k} \alpha) \rvert \leq \delta_{j,0} \lvert a(0,f\vert_{k}\alpha) \rvert + C_{\alpha}e^{-2\pi  y/N}$ as $y \rightarrow \infty$, for every $\alpha \in \SL_{2}(\mathbb{Z})$. Therefore, $y^{j-r}(d^{j}f\vert_{k} \alpha)$ decreases exponentially for $j\geq 1$ and $\lvert y^{-r}f\vert_{k}\alpha \rvert $ is bounded as $y \rightarrow \infty$. Putting all these together we get $(\delta_{k}^{r}f)\vert_{k+2r} \alpha \leq A_{\alpha}(1+\lvert y \rvert ^{-B_{\alpha}})$ as $y \rightarrow \infty$ for some $A_{\alpha}, B_{\alpha}$. 
\end{proof}

Now we will show that $E_{k}(z,s;\chi,\chi_{0})$ is a nearly holomorphic modular form if $\chi$ is a 
Dirichlet character modulo $N$ and $s \leq 0$ is an integer such that $k+2s > 2$. To prove this, we 
need to consider the action of the Maass-Shimura operator on Eisenstein series. Observe that for 
$k$, $r$ positive integers and $s \leq 0$ an integer such that $k+2s > 2$, we have 
\begin{align}\label{2.13} 
        \delta_{k}^{r}(y^{s})  
        & =  \sum\limits_{j=0}^{r} \binom{r}{j} \frac{\Gamma(k+r)}{\Gamma(k+j)} 
	     (-4\pi y)^{j-r}d^{j}y^{s}  \nonumber \\
	& =  \sum\limits_{j=0}^{r} \binom{r}{j} \frac{\Gamma(k+r)}{\Gamma(k+j)} 
	     (-4\pi y)^{j-r} \left( \frac{-1}{ 4 \pi} \right)^{j} 
	     \frac{\Gamma(s+1)}{\Gamma(s-j+1)} y^{s-j} \nonumber \\
        %& = (- 4 \pi)^{-r} y^{s-r} \sum\limits_{j=0}^{r} \binom{r}{j}       
	%    \frac{\Gamma(r+k)}{\Gamma(j+k)} \frac{\Gamma(s+1)}{\Gamma(s-j+1)} \\   
	& =  (-4 \pi)^{-r} y^{s-r} r! \sum\limits_{j=0}^{r} \binom{k+r-1}{r-j} \binom{s}{j} 
	      \nonumber \\
	& =  (-4 \pi)^{-r} \frac{\Gamma(s+k+r)}{\Gamma(s+k)} y^{s-r}, 
\end{align}
where the last equality follows by comparing the coefficient of $X^{r}$ in $(1+X)^{s} (1+X)^{k+r-1}$ and 
$(1+X)^{s+k+r-1}$. 
For $(c,d) \in \mathbb{Z}^{2} \setminus \lbrace (0,0) \rbrace$, let  
$\gamma = \begin{psmallmatrix} d & -c \\ c & d \end{psmallmatrix}$. 
Since $y^{s} \vert_{k} \gamma = (cz+d)^{-k} \lvert cz+d \rvert^{-2s} y^{s}$, we have
\begin{eqnarray*}
       \delta_{k}^{r}((cz+d)^{-k}\vert cz+d \vert^{-2s} y^{s}) 
       & = &  \delta_{k}^{r} (y^{s} \vert_{k} \gamma) 
        ~ = ~ \delta_{k}^{r} (y^{s}) \vert_{k+2r} \gamma \ \ \  \ \left(\mathrm{By \ Lemma} \ 
              \ref{Mass-Shimura Operator} \right) \\
       & \stackrel{\eqref{2.13}}{=} & (-4 \pi)^{-r} \frac{\Gamma(s+k+r)}{\Gamma(s+k)} 
          (y^{s-r}) \vert_{k+2r} \gamma \\
       & = & (-4 \pi)^{-r} \frac{\Gamma(s+k+r)}{\Gamma(s+k)}
               (cz+d)^{-k-2r}\vert cz+d \vert^{-2(s-r)}  y^{s-r}. 
\end{eqnarray*}
Let $\chi$ be  a Dirichlet character modulo $N$. Multiplying both sides of the equation above by $\chi(c)$ 
and then taking the sum over all $(c,d) \in \mathbb{Z}^{2} \smallsetminus \lbrace (0,0)\rbrace$ (ignoring 
convergence issues) we get 
 \begin{equation}\label{2.18}
 	(-4\pi)^{r}\frac{\Gamma(s+k)} {\Gamma(s+k+r)} \delta_{k}^{r}(E_{k}(z,s;\chi,\chi_{0}))
 	 = E_{k+2r}(z,s-r;\chi,\chi_{0}). 
 \end{equation}
 From \cite[Chapter 7]{Miyake} we know that  if $k \geq 3$, $E_{k}(z,0; \chi,\chi_{0}) = \sum_{c,d}^{'}  \chi(c) (cz+d)^{-k}$ 
is a usual holomorphic modular form in $M_{k}(N,\chi)$. 
 It follows from \eqref{2.18} and Proposition \ref*{prop2.6} that for all integers $r \geq 0$ and
 $k \geq 3$:
\begin{align*}
       %E_{k}(z,0; \chi,\chi_{0}) \in M_{k}(N,\chi) \Rightarrow 
        E_{k+2r}(z,-r;\chi,\chi_{0}) \in \mathcal{N}_{k+2r}^{r}(N, \chi) .
 %E_{k,N}(z,0,\psi) &= \sideset{}{'} \sum\limits_{c,d = - \infty} ^ {\infty}  \psi(d) (cNz+d)^{-k} \in M_{k}(N,\overline{\psi)} \Rightarrow.
\end{align*}
A similar argument for the Eisenstein series $E_{k,N}(z,0,\overline{\chi})  \in M_{k}(N,\chi)$,  $k \geq 3$ (cf. \eqref{E_{k,N}}), gives: 
%$$
%       (-4\pi)^{r}\frac{\Gamma(s+k)} {\Gamma(s+k+r)}	 \delta_{k}^{r}(E_{k,N}(z,s,\psi)) 
%       = E_{k+2r,N}(z,s-r,\psi) \in \mathcal{N}_{k+2r}^{r}(N, \overline{\psi}). 
%$$
%Thus we have proved
%
\begin{proposition}\label{prop2.9}
	Let $k$, $r$ be integers such that $0 \leq r < k/2-1$. If $\chi$ is a Dirichlet character mod $N$, 
        then $E_{k}(z,-r; \chi, \chi_{0})$, $E_{k,N}(z,-r,\overline{\chi})  \in \mathcal{N}_{k}^{r}(N,\chi)$.
\end{proposition}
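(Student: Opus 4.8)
The plan is to obtain both membership statements by reindexing the two consequences derived immediately before the proposition, so that essentially no new computation is required. The starting observation is that, for integers $k$ and $r$, the hypothesis $0 \le r < k/2 - 1$ is equivalent to $r \ge 0$ together with $k - 2r \ge 3$: indeed $r < k/2 - 1 \iff 2r \le k-3 \iff k - 2r \ge 3$. I would therefore set $k_{0} := k - 2r$, so that $k_{0} \ge 3$ and $k_{0} + 2r = k$. The whole point of the bound $r < k/2 - 1$ is precisely to guarantee $k_{0} \ge 3$, which is exactly what makes the base-weight Eisenstein series $E_{k_{0}}(z,0;\chi,\chi_{0})$ (resp. $E_{k_{0},N}(z,0,\overline{\chi})$) an honest holomorphic modular form in $M_{k_{0}}(N,\chi)$.

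For $E_{k}(z,-r;\chi,\chi_{0})$, I would simply instantiate the displayed consequence of \eqref{2.18} and Proposition~\ref{prop2.6} with base weight $k_{0} \ge 3$ and depth $r \ge 0$, which reads $E_{k_{0}+2r}(z,-r;\chi,\chi_{0}) \in \mathcal{N}_{k_{0}+2r}^{r}(N,\chi)$. Since $k_{0}+2r = k$, this is exactly $E_{k}(z,-r;\chi,\chi_{0}) \in \mathcal{N}_{k}^{r}(N,\chi)$. The nebentypus is preserved because, by Lemma~\ref{Mass-Shimura Operator}, $(\delta_{k_{0}}^{r}f)\vert_{k_{0}+2r}\gamma = \delta_{k_{0}}^{r}(f\vert_{k_{0}}\gamma) = \chi(\gamma)\,\delta_{k_{0}}^{r}(f)$ for $f \in M_{k_{0}}(N,\chi)$ and $\gamma \in \Gamma_{0}(N)$, and the proportionality constant $(-4\pi)^{-r}\Gamma(k_{0}+r)/\Gamma(k_{0})$ from \eqref{2.18} is nonzero since $k_{0} \ge 1$.

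For $E_{k,N}(z,-r,\overline{\chi})$ I would first record the analogue of \eqref{2.18}, namely
\[
(-4\pi)^{r}\frac{\Gamma(s+k)}{\Gamma(s+k+r)}\,\delta_{k}^{r}\bigl(E_{k,N}(z,s,\overline{\chi})\bigr) = E_{k+2r,N}(z,s-r,\overline{\chi}).
\]
This is proved by the identical term-by-term computation used for \eqref{2.18}: each summand $y^{s}(cNz+d)^{-k}\lvert cNz+d\rvert^{-2s}$ is written as $y^{s}\vert_{k}\gamma$ for the matrix $\gamma = \begin{psmallmatrix} d & -cN \\ cN & d\end{psmallmatrix}$ with bottom row $(cN,d)$, one applies Lemma~\ref{Mass-Shimura Operator} to pull $\delta_{k}^{r}$ past the slash, and then uses the formula \eqref{2.13} for $\delta_{k}^{r}(y^{s})$ before summing over $(c,d)$ against $\overline{\chi}(d)$. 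Specializing to $s=0$ and $k = k_{0} \ge 3$, and combining $E_{k_{0},N}(z,0,\overline{\chi}) \in M_{k_{0}}(N,\chi)$ with Proposition~\ref{prop2.6}, gives $E_{k_{0}+2r,N}(z,-r,\overline{\chi}) \in \mathcal{N}_{k_{0}+2r}^{r}(N,\chi)$; reindexing via $k_{0}+2r = k$ yields $E_{k,N}(z,-r,\overline{\chi}) \in \mathcal{N}_{k}^{r}(N,\chi)$.

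I do not expect a serious obstacle here: once the pre-proposition material is in hand, both claims are routine. The only thing demanding care is the index bookkeeping — translating the ``$k+2r$'' normalization of the displays preceding the proposition into the ``$k$'' normalization of the statement, which is precisely where the equivalence $r < k/2 - 1 \iff k - 2r \ge 3$ is invoked to keep the base weight $\ge 3$ — together with verifying that the $E_{k,N}$ analogue of \eqref{2.18} really does follow verbatim from the $E_{k}$ argument with $(cz+d)$ replaced by $(cNz+d)$.
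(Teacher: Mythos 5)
Your proposal is correct and follows essentially the same route as the paper: the paper's ``proof'' of Proposition~\ref{prop2.9} is precisely the displayed consequence of \eqref{2.18} and Proposition~\ref{prop2.6} (reindexed via the base weight $k_{0}=k-2r\geq 3$, which is what the hypothesis $0\leq r<k/2-1$ encodes), together with the remark that ``a similar argument'' handles $E_{k,N}(z,-r,\overline{\chi})$. You have simply spelled out that similar argument (the $(cNz+d)$ analogue of \eqref{2.18}) and the bookkeeping of the reindexing, both of which are exactly what the paper intends.
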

 \begin{theorem} \emph{\cite[$\S$10.1, Theorem 1]{hida_93}} \label{thm2.8}
 	Suppose that $r \geq 0$ and $k \geq 1$. If $f \in \mathcal{N}_{k+2r}^{r}(N,\chi)$, then  
 	\begin{align}\label{2.20}
 	     f = \sum_{j=0}^{r} \delta_{k+2r-2j}^{j}h_{j}, \  \mathrm{where} \ h_{j}  
 	     \in M_{k+2r-2j}(N,\chi).
 	 \end{align}  
 	More precisely, 
       \begin{align*}
            \mathcal{N}_{k+2r}^{r}(N,\chi) \cong \oplus_{j=0}^{r} M_{k+2r-2j}(N,\chi),  \\	
            \mathcal{N}S_{k+2r}^{r}(N,\chi) \cong \oplus_{j=0}^{r} S_{k+2r-2j}(N,\chi), 
       \end{align*}
       and the isomorphism is obtained via $(f) \mapsto (h_{j})$, where $h_{j}$ are as in \eqref{2.20}. 
       Moreover these isomorphisms are equivariant under the $\vert_k$ action of $\mathrm{GL}_{2}^{+}
       (\mathbb{R})$.
 \end{theorem}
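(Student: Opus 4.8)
The plan is to prove the theorem by induction on the depth $r$, with the engine being the identification of the top nearly-holomorphic coefficient of $f$ as a genuine holomorphic modular form. Write $\kappa = k+2r$. By Definition~\ref{Definition nearly holomorphic} we may write $f(z) = \sum_{j=0}^{r}(4\pi y)^{-j}f_{j}(z)$ with each $f_{j}$ holomorphic on $\mathcal{H}$, and this expansion is unique: applying $\partial/\partial\bar z$ repeatedly annihilates the $f_{j}$ while lowering the power of $y$, so the coefficients can be isolated one at a time. For $r=0$ we have $\mathcal{N}_{k}^{0}(N,\chi)=M_{k}(N,\chi)$, so $f=\delta_{k}^{0}f=h_{0}$ and there is nothing to prove. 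The two assertions I must package are well-definedness of $(f)\mapsto(h_{j})$ and its inverse; the assembly map $(h_{j})\mapsto\sum_{j}\delta_{k+2r-2j}^{j}h_{j}$ already lands in $\mathcal{N}_{k+2r}^{r}(N,\chi)$ by Proposition~\ref{prop2.6}.

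The key step is to show $f_{r}\in M_{k}(N,\chi)$, where $k=\kappa-2r$. For $\gamma=\begin{psmallmatrix} a & b \\ c & d\end{psmallmatrix}\in\Gamma_{0}(N)$ I would use $\mathrm{Im}(\gamma z)=y/\lvert cz+d\rvert^{2}$ and the identity $\overline{cz+d}=(cz+d)-2icy$ to expand $\lvert cz+d\rvert^{2j}=(cz+d)^{j}(\overline{cz+d})^{j}$ and then collect powers of $(4\pi y)^{-1}$ on both sides of $f\vert_{\kappa}\gamma=\chi(\gamma)f$. The coefficient of the top power $(4\pi y)^{-r}$ receives a contribution only from the $j=r$ summand and equals $(cz+d)^{-\kappa+2r}f_{r}(\gamma z)=(cz+d)^{-k}f_{r}(\gamma z)$; comparing with the right-hand side yields $f_{r}\vert_{k}\gamma=\chi(\gamma)f_{r}$. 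Since $f_{r}$ is holomorphic this is weak modularity of weight $k$ and character $\chi$. The same computation with arbitrary $\alpha\in\SL_{2}(\mathbb{Z})$ shows that the leading coefficient of $f\vert_{\kappa}\alpha$ is $f_{r}\vert_{k}\alpha$; the slowly-increasing condition on $f$ is inherited by its holomorphic coefficients (they are recovered from $f$ by applying $\partial/\partial\bar z$ and multiplying by powers of $y$, operations that preserve polynomial growth), and a holomorphic function of polynomial growth at $\infty$ has no negative Fourier modes, so $f_{r}\vert_{k}\alpha$ is holomorphic at $\infty$ for every $\alpha$. Hence $f_{r}\in M_{k}(N,\chi)$.

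For the inductive descent, property (4) of $\delta_{k}^{r}$ shows that the coefficient of $(4\pi y)^{-r}$ in $\delta_{k}^{r}h_{r}$ is $(-1)^{r}\,\Gamma(k+r)/\Gamma(k)\,h_{r}$. Choosing $h_{r}=(-1)^{r}\,\Gamma(k)/\Gamma(k+r)\,f_{r}\in M_{k}(N,\chi)$ makes the top coefficients agree, so $f-\delta_{k}^{r}h_{r}\in\mathcal{N}_{\kappa}^{r-1}(N,\chi)$. Writing $\kappa=(k+2)+2(r-1)$ and applying the inductive hypothesis gives $h_{j}\in M_{k+2r-2j}(N,\chi)$ for $0\le j\le r-1$ with $f-\delta_{k}^{r}h_{r}=\sum_{j=0}^{r-1}\delta_{k+2r-2j}^{j}h_{j}$, completing the decomposition. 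Injectivity of $(f)\mapsto(h_{j})$ follows by the same leading-coefficient comparison: if $\sum_{j}\delta_{k+2r-2j}^{j}h_{j}=0$, reading off the coefficients of $(4\pi y)^{-r},(4\pi y)^{-(r-1)},\dots$ forces $h_{r}=h_{r-1}=\cdots=0$. For cusp forms, $f$ rapidly decreasing makes $f_{r}$, and inductively all $h_{j}$, cusp forms, while conversely $\delta^{j}$ sends cusp forms to rapidly decreasing functions (cf. Lemma~\ref{Lemmabounds}); thus the isomorphism restricts to $\mathcal{N}S_{k+2r}^{r}(N,\chi)\cong\oplus_{j}S_{k+2r-2j}(N,\chi)$. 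Finally, $\gl_{2}^{+}(\mathbb{R})$-equivariance is immediate from Lemma~\ref{Mass-Shimura Operator}: since $(\delta_{k+2r-2j}^{j}h_{j})\vert_{\kappa}\gamma=\delta_{k+2r-2j}^{j}(h_{j}\vert_{k+2r-2j}\gamma)$, applying $\vert_{\kappa}\gamma$ to $f=\sum_{j}\delta^{j}h_{j}$ replaces each component $h_{j}$ by $h_{j}\vert_{k+2r-2j}\gamma$.

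I expect the only genuine obstacle to be the leading-coefficient lemma: the bookkeeping in expanding $(\overline{cz+d})^{j}$ and collecting powers of $1/y$ so that exactly the top coefficient transforms with weight $k$, together with the verification — via slow increase — that $f_{r}$ is holomorphic at the cusps rather than merely weakly modular. Everything downstream (injectivity, the inductive step, the cusp-form refinement, and equivariance) is then formal.
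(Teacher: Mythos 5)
You should first note that the paper contains no proof of Theorem~\ref{thm2.8} at all: it is quoted from Hida's book \cite[$\S$10.1, Theorem 1]{hida_93}, so there is no internal argument to compare against. Your proof is, in essence, the standard Shimura--Hida argument that the citation points to: uniqueness of the expansion $f=\sum_{j}(4\pi y)^{-j}f_{j}$, the observation that the top coefficient $f_{r}$ is weakly modular of weight $k$, correction by $\delta_{k}^{r}h_{r}$ with $h_{r}=(-1)^{r}\Gamma(k)\Gamma(k+r)^{-1}f_{r}$ to drop the depth, induction, injectivity by reading off leading coefficients, and equivariance from Lemma~\ref{Mass-Shimura Operator}. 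The bookkeeping is correct (including the constant: the $j=0$ term of property (4) of $\delta_{k}^{r}$ has coefficient $(-1)^{r}\Gamma(k+r)/\Gamma(k)$, and the top coefficient of $(cz+d)^{-k-2r}f(\gamma z)$ is indeed $(cz+d)^{-k}f_{r}(\gamma z)$), and the reduction of the cusp-form refinement to the fact that $\delta^{j}$ takes cusp forms to rapidly decreasing functions is also sound.

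The one step whose stated justification would fail is exactly the one you flagged as the crux: the claim that the holomorphic coefficients inherit the growth of $f$ because they are \emph{recovered from $f$ by applying $\partial/\partial\bar{z}$ and multiplying by powers of $y$, operations that preserve polynomial growth}. Differentiation does not preserve growth conditions for general smooth functions, and here the reasoning is circular: $\partial f/\partial\bar{z}=\sum_{j\geq 1}c_{j}y^{-j-1}f_{j}$ is a combination of the same unknown coefficients, so bounding it requires precisely the coefficient bounds you are trying to establish. The standard repair uses periodicity rather than differentiation. Since $\Gamma(N)$ is normal in $\SL_{2}(\mathbb{Z})$ and $f$ is $\Gamma_{1}(N)$-invariant, for every $\alpha\in\SL_{2}(\mathbb{Z})$ the function $F=f\vert_{k+2r}\alpha$ is invariant under $z\mapsto z+N$; by your binomial computation $F=\sum_{i=0}^{r}(4\pi y)^{-i}g_{i}$ with $g_{i}$ holomorphic and $g_{r}=f_{r}\vert_{k}\alpha$, and by uniqueness of such expansions each $g_{i}$ is itself $N$-periodic, say $g_{i}(z)=\sum_{n\in\mathbb{Z}}a_{i,n}e(nz/N)$. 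The $n$-th Fourier coefficient of $F$ in the variable $x$ equals $\sum_{i}(4\pi y)^{-i}a_{i,n}e^{-2\pi ny/N}$ and is bounded for large $y$ by the slowly-increasing hypothesis; for $n<0$ this forces the polynomial $\sum_{i}(4\pi y)^{-i}a_{i,n}$ in $(4\pi y)^{-1}$ to decay exponentially, hence (letting $y\to\infty$, then multiplying by $4\pi y$ and repeating) $a_{i,n}=0$ for all $i$. Thus every $g_{i}$, in particular $f_{r}\vert_{k}\alpha$, is holomorphic and bounded at $\infty$ for every $\alpha$, which is what membership in $M_{k}(N,\chi)$ requires; the same computation under the rapid-decay hypothesis kills the constant terms $a_{i,0}$ and yields the cusp-form case. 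With this substitution your argument is complete and coincides with the proof in the cited reference.
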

\noindent The projection $ f  \mapsto h_{0} $ induces  a map
\begin{align}
   \label{HolN}
	{\mathcal{H}ol}: \mathcal{N}_{k+2r}^{r}(N,\chi) \rightarrow M_{k+2r}(N,\chi),
\end{align}
which is called the holomorphic projection.
%
%\begin{lemma}\label{lemma2.11}
%	Let $f: \mathcal{H} \rightarrow \mathbb{C}$ be a smooth function which is slowly increasing. 
%	Further, if $f\vert_{k}\gamma =\psi(\gamma) f $ for every $\gamma \in \Gamma_{0}(N)$, then 
%	$\langle f , g \rangle_{N} := \int_{\mathcal{H}/\Gamma_{0}(N)} \overline{f(z)}g(z) y^{k-2} dx  dy$ 
%	converges, $\forall \ g \in S_{k}(N, \psi)$. 
%\end{lemma}
\begin{lemma}\label{lemma2.11}
	Let $f \in S_k(N,\psi)$ and let $g : \mathcal{H} \rightarrow \mathbb{C}$ be a smooth function which is 
        slowly increasing 
	such that $g\vert_{k}\gamma =\psi(\gamma) g $ for every $\gamma \in \Gamma_{0}(N)$. Then 
	$\langle f , g \rangle_{N} := \int_{\mathcal{H}/\Gamma_{0}(N)} \overline{f(z)}g(z) y^{k-2} dx  dy$ converges. 
\end{lemma}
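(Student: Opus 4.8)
The plan is to separate the question of whether the integral is \emph{well defined} on the quotient from the question of \emph{convergence}, and then to localize the convergence problem at the finitely many cusps. First I would record that the integrand is genuinely a form on $\mathcal{H}/\Gamma_{0}(N)$: writing $\Phi(z):=\overline{f(z)}\,g(z)\,y^{k}$, the hypotheses $f\vert_{k}\gamma=\psi(\gamma)f$ and $g\vert_{k}\gamma=\psi(\gamma)g$ for $\gamma\in\Gamma_{0}(N)$, together with $\lvert\psi(\gamma)\rvert=1$ and $\mathrm{Im}(\gamma z)=y/\lvert cz+d\rvert^{2}$, give $\Phi(\gamma z)=\Phi(z)$; since $d\mu=y^{-2}\,dx\,dy$ is $\gl_{2}^{+}(\mathbb{R})$-invariant, the integrand $\overline{f}\,g\,y^{k-2}\,dx\,dy=\Phi\,d\mu$ descends to the quotient and the integral is independent of the chosen fundamental domain.

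Second, I would choose a fundamental domain that is a union of a compact core $K$ and finitely many cuspidal neighborhoods, one for each of the finitely many $\Gamma_{0}(N)$-inequivalent cusps $\mathfrak{a}_{1},\ldots,\mathfrak{a}_{t}$. On $K$ the integrand is continuous, hence its contribution is finite, and convergence becomes a purely local question at each cusp.

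Third, I would reduce each cusp to the cusp at $\infty$. For $\mathfrak{a}_{i}$ pick $\sigma_{i}\in\SL_{2}(\mathbb{Z})$ with $\sigma_{i}\infty=\mathfrak{a}_{i}$; the same computation as in the first step shows $\Phi(\sigma_{i}z)=\overline{(f\vert_{k}\sigma_{i})(z)}\,(g\vert_{k}\sigma_{i})(z)\,y^{k}$, so by invariance of $d\mu$ the contribution of the neighborhood of $\mathfrak{a}_{i}$ becomes an integral over a strip $\{0\le x\le w_{i},\ y\ge Y\}$ of $\overline{(f\vert_{k}\sigma_{i})}\,(g\vert_{k}\sigma_{i})\,y^{k-2}\,dx\,dy$. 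Now the estimates finish the argument: because $f$ is a cusp form, $f\vert_{k}\sigma_{i}$ is holomorphic with vanishing constant term, so by \eqref{slowly increasing} it decays exponentially, $\lvert(f\vert_{k}\sigma_{i})(z)\rvert\le C\,e^{-2\pi y/w_{i}}$ as $y\to\infty$; because $g$ is slowly increasing, $\lvert(g\vert_{k}\sigma_{i})(z)\rvert$ is at worst of polynomial growth. Hence the integrand is bounded by a constant times $e^{-cy}y^{M}$ for some $c>0$, and $\int_{Y}^{\infty}e^{-cy}y^{M}\,dy<\infty$; integrating over the bounded $x$-range gives a finite contribution from each cusp, and summing over $i=1,\ldots,t$ together with $K$ proves convergence.

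The main obstacle I anticipate is the bookkeeping at the cusps rather than any single estimate: one must verify that $f\vert_{k}\sigma_{i}$ really is cuspidal at $\infty$ (zero constant term in its Fourier expansion) so that the exponential decay of \eqref{slowly increasing} applies, and must carry out the reduction $z\mapsto\sigma_{i}z$ cleanly so that the slash transformations of both $f$ and $g$ and the invariance of $d\mu$ combine correctly. Once that is in place, the decisive point is the elementary fact that the exponential decay of $f$ dominates the at-most-polynomial growth of $g$, which is exactly what forces the integral to converge.
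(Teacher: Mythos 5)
Your proof is correct and is in substance the same argument the paper uses: the paper's proof of Lemma~\ref{lemma2.11} is a one-line citation of Lemma~\ref{Lemmabounds}~(1) (cusp forms are rapidly decreasing at every cusp, proved there exactly as you do it, via the vanishing of the constant term and \eqref{slowly increasing}) together with the convergence criterion of \cite[\S 9.3, (6)]{hida_93}, which packages your fundamental-domain decomposition and the exponential-beats-polynomial estimate at each cusp. You have simply written out in full the two ingredients that the paper delegates to those references.
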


\begin{proof}
  This follows from  Lemma~\ref{Lemmabounds} (1) below and \cite[$\S$9.3, (6)]{hida_93}.
\end{proof}

\begin{lemma}\label{lemma2.10}
       Suppose $f \in S_{k}(N, \chi)$ and $g \in \mathcal{N}_{k}^{r}(N,\chi)$. If $r < k/2$, then 
       $\langle f,g \rangle_N = \langle f, \mathcal{H}ol(g) \rangle_N$. Further, if $g \in 
       \mathcal{N}S_{k}^{r}(N,\chi)$, then $\mathcal{H}ol(g)$ is the unique cusp form with the property 
       $\langle  f , g \rangle_{N} = \langle f , \mathcal{H}ol(g) \rangle_{N},\ \forall \ f \in 
       S_{k}(N, \chi)$.
\end{lemma}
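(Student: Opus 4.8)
The plan is to deduce everything from Theorem~\ref{thm2.8} together with a single vanishing statement: $\langle f,\delta_{k-2}G\rangle_{N}=0$ for every cusp form $f\in S_{k}(N,\chi)$ and every slowly increasing nearly holomorphic $G$ of weight $k-2$ and nebentypus $\chi$. First I would apply Theorem~\ref{thm2.8} with weight parameter $k-2r$ in place of its ``$k$''; this is legitimate precisely because the hypothesis $r<k/2$ forces $k-2r\geq 1$ (and keeps every weight $k-2j$ positive for $0\le j\le r$). It gives $g=\sum_{j=0}^{r}\delta_{k-2j}^{\,j}h_{j}$ with $h_{j}\in M_{k-2j}(N,\chi)$ and $\mathcal{H}ol(g)=h_{0}$. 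Since $g$ is slowly increasing, Lemma~\ref{lemma2.11} ensures $\langle f,g\rangle_{N}$ converges, as does each $\langle f,\delta_{k-2j}^{\,j}h_{j}\rangle_{N}$; by linearity it remains to kill the terms with $j\ge 1$.

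For $j\ge 1$ I write $\delta_{k-2j}^{\,j}h_{j}=\delta_{k-2}(G_{j})$ with $G_{j}:=\delta_{k-2j}^{\,j-1}h_{j}\in\mathcal{N}_{k-2}^{\,j-1}(N,\chi)$, slowly increasing by Proposition~\ref{prop2.6}. Using property (2) of the Maass--Shimura operator, $\delta_{k-2}G_{j}=y^{-(k-2)}d(y^{k-2}G_{j})$ with $d=\frac{1}{2\pi i}\frac{\partial}{\partial z}$, so
$$\langle f,\delta_{k-2}G_{j}\rangle_{N}=\frac{1}{2\pi i}\int_{\mathcal{H}/\Gamma_{0}(N)}\overline{f}\,\frac{\partial}{\partial z}\!\left(y^{k-2}G_{j}\right)dx\,dy.$$
Because $f$ is holomorphic, $\frac{\partial}{\partial z}\overline{f}=\overline{\frac{\partial}{\partial\bar z}f}=0$, so the integrand equals $\frac{\partial}{\partial z}\Phi$ with $\Phi:=\overline{f}\,y^{k-2}G_{j}$. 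A direct transformation check shows $\Phi\,d\bar z$ is a $\Gamma_{0}(N)$-invariant $1$-form: the automorphy factor $\overline{(cz+d)}^{k}$ from $\overline f$, the factor $\overline{(cz+d)}^{-(k-2)}$ from $y^{k-2}G_{j}$, and $\overline{(cz+d)}^{-2}$ from $d\bar z$ multiply to $1$, while the nebentypus factors $\overline{\chi(\gamma)}\chi(\gamma)=1$ cancel. Writing $\frac{\partial}{\partial z}\Phi\,dx\,dy=\frac{i}{2}\,d(\Phi\,d\bar z)$ and applying Stokes' theorem on a truncation of the fundamental domain, the side identifications cancel by invariance, and the boundary pieces near each cusp vanish because $f$ is a cusp form: after conjugating by the cusp-normalizing matrices, $\overline f$ decays exponentially while $y^{k-2}G_{j}$ grows at most polynomially. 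Hence $\langle f,\delta_{k-2}G_{j}\rangle_{N}=0$, and therefore $\langle f,g\rangle_{N}=\langle f,\mathcal{H}ol(g)\rangle_{N}$.

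For the cuspidal refinement, when $g\in\mathcal{N}S_{k}^{r}(N,\chi)$ the rapidly decreasing version of the isomorphism in Theorem~\ref{thm2.8} gives $h_{j}\in S_{k-2j}(N,\chi)$ for all $j$, so in particular $\mathcal{H}ol(g)=h_{0}\in S_{k}(N,\chi)$; the identity just proved shows it has the stated property. Uniqueness follows from positive-definiteness of the Petersson product on $S_{k}(N,\chi)$: if two cusp forms $h,h'$ satisfy $\langle f,h\rangle_{N}=\langle f,h'\rangle_{N}$ for all $f\in S_{k}(N,\chi)$, then choosing $f=h-h'$ gives $\langle h-h',h-h'\rangle_{N}=0$, whence $h=h'$.

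The main obstacle is the final part of the Stokes step: rigorously justifying that $\Phi\,d\bar z$ descends to the open modular curve and that the cuspidal decay of $f$ genuinely dominates the polynomial growth of the nearly holomorphic $G_{j}$ at every cusp, so that all boundary contributions vanish in the truncation limit. This is exactly where cuspidality of $f$ and the slowly-increasing condition on $G_{j}$ are both essential. Once that convergence and decay bookkeeping is in place, the remaining ingredients---linearity, the decomposition from Theorem~\ref{thm2.8}, and the positive-definiteness argument---are purely formal.
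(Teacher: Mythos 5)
Your proof is correct, and its skeleton is the same as the paper's: decompose $g=\sum_{j=0}^{r}\delta_{k-2j}^{j}h_{j}$ via Theorem~\ref{thm2.8} (legitimate, as you note, exactly because $r<k/2$ forces $k-2r\geq 1$), identify $\mathcal{H}ol(g)=h_{0}$, and get uniqueness from positive-definiteness of the Petersson product on $S_{k}(N,\chi)$, with Lemma~\ref{lemma2.11} guaranteeing all the inner products converge. Where you genuinely depart from the paper is the first assertion: the paper disposes of $\langle f,g\rangle_{N}=\langle f,\mathcal{H}ol(g)\rangle_{N}$ with a citation to \cite[\S 10.1, Corollary 1]{hida_93}, whereas you prove the underlying orthogonality $\langle f,\delta_{k-2}G\rangle_{N}=0$ from scratch. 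Your argument is sound: the identity $\delta_{k-2j}^{j}=\delta_{k-2}\circ\delta_{k-2j}^{j-1}$ follows from the definition of the iterated operator, Proposition~\ref{prop2.6} (plus Lemma~\ref{Mass-Shimura Operator} for the nebentypus) puts $G_{j}=\delta_{k-2j}^{j-1}h_{j}$ in $\mathcal{N}_{k-2}^{j-1}(N,\chi)$, the form $\overline{f}\,y^{k-2}G_{j}\,d\bar z$ is indeed $\Gamma_{0}(N)$-invariant by the automorphy-factor count you give, the identity $\partial_{z}\Phi\,dx\,dy=\tfrac{i}{2}\,d(\Phi\,d\bar z)$ is correct, and the boundary terms at the cusps vanish because $f|_{k}\alpha$ decays exponentially (Lemma~\ref{Lemmabounds}~(1)) while slowly increasing functions are at worst bounded as $y\to\infty$. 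This Stokes computation is essentially the standard proof behind Hida's cited corollary, so the mathematics coincides at bottom; what your version buys is self-containedness and visibility of where each hypothesis enters (the bound $r<k/2$ keeps every weight $k-2j$ positive so the structure theorem applies, and cuspidality of $f$ is precisely what kills the truncation boundary), at the cost of the convergence and cusp-by-cusp bookkeeping you correctly flag as the only remaining routine work.
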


\begin{proof}
      The first part  follows from \cite[$\S$10.1, Corolllary 1]{hida_93}. By Theorem \ref{thm2.8}, we have 
      $\mathcal{H}ol(g) \in S_{k}(N,\chi)$. Now the first part shows that $\mathcal{H}ol(g)$ satisfies the 
      required property. From  Lemma \ref{lemma2.11}, we have $f \mapsto \langle f,g \rangle_{N}$ 
      defines an anti-linear functional on $S_{k}(N,\chi)$.  The uniqueness statement follows from the fact that  
      Petersson inner product induces a non-degenerate pairing $ \langle  \>\>  , \>\> \rangle_N : S_{k}(N, \chi ) 
      \times S_{k}(N,\chi) \rightarrow \mathbb{C}$.
\end{proof}
\begin{lemma} \label{hol} \textbf{\emph{(}Holomorphic Projection lemma\emph{)}} \
              \emph{\cite[Appendix C]{Zagier}}
	Let $\varPhi:\mathcal{H} \rightarrow \mathbb{C}$ be a smooth function satisfying:
	\begin{enumerate}[label={\emph{(\arabic*)}}]
		\item  $\varPhi(\gamma(z)) = (cz+d)^{k} \varPhi(z), \  \forall \ \gamma = 
		       \begin{pmatrix} a & b \\ c & d \end{pmatrix} \in \Gamma$  and $ z \in \mathcal{H}$,
		\item $\varPhi(z) = c_{0}+O(y^{-\epsilon})$  as  $y =\mathrm{Im(z)} \rightarrow \infty$,
	\end{enumerate}
       for some integer $k > 2$ and numbers $c_{0} \in \mathbb{C}$ and $\epsilon > 0$. If 
       $\varPhi(z)  = \sum_{n=0}^{\infty} c_{n}(y)e(n x)$, then the function 
       $\phi(z):= \sum_{n=0}^{\infty}c_{n}e(nz)$ with  $$c_{n}= \frac{(4 \pi n)^{k-1}}{(k-2)!} 
       \int_{0}^{\infty} c_{n}(y)e^{-2 \pi n y}y^{k-2} dy $$ for $n >0$ belongs to $M_{k}(\Gamma)$  and 
       satisfies $\langle f , \phi\rangle _{\Gamma}= \langle f, \varPhi \rangle_{\Gamma},\ \forall  
       \ f \in S_{k}(\Gamma).$
\end{lemma}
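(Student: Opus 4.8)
The plan is to test the desired identity against the holomorphic Poincaré series, which span the target space, and to read off the Fourier coefficients of $\phi$ from the unfolded inner products. For each integer $m\geq 1$ I set $P_m(z)=\sum_{\gamma\in\Gamma_\infty\backslash\Gamma}(e_m|_k\gamma)(z)$, where $e_m(z)=e(mz)$ and $\Gamma_\infty$ is the stabiliser of the cusp $\infty$; since $k>2$ this series converges absolutely, $P_m\in S_k(\Gamma)$, and the family $\{P_m\}_{m\geq 1}$ spans $S_k(\Gamma)$ because pairing with $P_m$ recovers the $m$-th Fourier coefficient of a cusp form, so a form orthogonal to all of them vanishes. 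As the Petersson product is non-degenerate on $S_k(\Gamma)$, it then suffices to prove $\langle P_m,\phi\rangle_\Gamma=\langle P_m,\varPhi\rangle_\Gamma$ for every $m\geq1$.

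First I would record Petersson's formula. Unfolding the sum defining $\overline{P_m}$ against a form $\psi=\sum_n a(n)e(nz)\in M_k(\Gamma)$, the weight-$k$ invariance collapses the integral over $\Gamma\backslash\mathcal{H}$ to one over the strip $\Gamma_\infty\backslash\mathcal{H}$; extracting the $m$-th Fourier coefficient in $x$ and using the elementary evaluation $\int_0^\infty e^{-4\pi m y}y^{k-2}\,dy=\Gamma(k-1)/(4\pi m)^{k-1}$ gives
\[
\langle P_m,\psi\rangle_\Gamma=\frac{(k-2)!}{(4\pi m)^{k-1}}\,a(m).
\]
Applied to $\psi=\phi$—legitimate once we know $\phi\in M_k(\Gamma)$, which I address last—this yields $\langle P_m,\phi\rangle_\Gamma=\frac{(k-2)!}{(4\pi m)^{k-1}}c_m$.

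Next I would run the same unfolding with $\varPhi$ in place of $\psi$. The hypothesis (1) is exactly the invariance needed to unfold $\langle P_m,\varPhi\rangle_\Gamma$ to the strip, and since now $\int_0^1\varPhi(x+iy)e^{-2\pi i m x}\,dx=c_m(y)$ rather than $a(m)e^{-2\pi m y}$, one obtains
\[
\langle P_m,\varPhi\rangle_\Gamma=\int_0^\infty c_m(y)e^{-2\pi m y}y^{k-2}\,dy.
\]
Comparing with the definition of $c_m$ shows $\langle P_m,\varPhi\rangle_\Gamma=\frac{(k-2)!}{(4\pi m)^{k-1}}c_m=\langle P_m,\phi\rangle_\Gamma$ for all $m\geq 1$, and the spanning property then upgrades this to $\langle f,\phi\rangle_\Gamma=\langle f,\varPhi\rangle_\Gamma$ for every $f\in S_k(\Gamma)$.

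Two analytic points remain, and the second is where the real work lies. The convergence of $\langle P_m,\varPhi\rangle_\Gamma$ and the legitimacy of interchanging summation and integration during the unfolding follow from the absolute convergence of $P_m$ (valid precisely because $k>2$) together with the moderate growth of $\varPhi$ guaranteed by (1)--(2); granting this, the convergence of the defining integral for $c_n$ is automatic, being equal to a convergent integral over a fundamental domain. The main obstacle is showing that $\phi\in M_k(\Gamma)$ at all: one must check that the $c_n$ grow at most polynomially, so that $\phi(z)=\sum_{n\geq0}c_n e(nz)$ defines a holomorphic function on $\mathcal{H}$, and, crucially, that this function inherits the weight-$k$ transformation law under $\Gamma$. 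That modularity is the substance of the holomorphic projection theorem—that the projection carries a weight-$k$ function of moderate growth into $M_k(\Gamma)$—and I would invoke the analysis of \cite{Zagier} for it. It is precisely here, rather than in the pairing computation, that the hypotheses $k>2$ and $\varPhi=c_0+O(y^{-\epsilon})$ (which also pins down the constant term $c_0$) are indispensable.
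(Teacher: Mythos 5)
The paper itself gives no proof of this lemma; it simply quotes \cite[Appendix C]{Zagier}, so the comparison is with the standard argument there, whose engine is exactly the Poincar\'e-series computation you set up. Your two unfolding identities are correct: $\langle P_m,\psi\rangle_\Gamma=\frac{(k-2)!}{(4\pi m)^{k-1}}a(m)$ for $\psi \in M_k(\Gamma)$, and $\langle P_m,\varPhi\rangle_\Gamma=\int_0^\infty c_m(y)e^{-2\pi my}y^{k-2}\,dy$, both legitimate for $k>2$ under hypotheses (1)--(2). Nevertheless your proof has a genuine gap, which you flag but do not close: the assertion $\phi\in M_k(\Gamma)$. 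You defer it to ``the analysis of \cite{Zagier}'', i.e.\ to the very result being proved (and to the very citation the paper gives in lieu of a proof), so at the decisive point the argument is circular. Worse, the gap infects the half you do carry out: applying Petersson's coefficient formula to $\psi=\phi$, to get $\langle P_m,\phi\rangle_\Gamma=\frac{(k-2)!}{(4\pi m)^{k-1}}c_m$, already presupposes that $\phi$ transforms with weight $k$ under $\Gamma$, since otherwise $\langle P_m,\phi\rangle_\Gamma$ does not unfold. So, as written, neither the modularity nor the pairing identity is actually established.

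The fix is not more analysis but a reversal of the logical order, after which your own formulas finish the job and the modularity of $\phi$ costs nothing. Since $S_k(\Gamma)$ is finite dimensional and the Petersson product is positive definite on it, and since $f\mapsto\langle f,\varPhi\rangle_\Gamma$ is a well-defined antilinear functional on $S_k(\Gamma)$ (convergence is Lemma~\ref{lemma2.11}; compare the non-degeneracy argument in Lemma~\ref{lemma2.10}), there is a unique $\phi_0\in S_k(\Gamma)$ with $\langle f,\phi_0\rangle_\Gamma=\langle f,\varPhi\rangle_\Gamma$ for all $f\in S_k(\Gamma)$; membership in $S_k(\Gamma)$ is now automatic, not something to be verified. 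Taking $f=P_m$ and comparing your two unfolding identities shows that the $m$-th Fourier coefficient of $\phi_0$ equals $\frac{(4\pi m)^{k-1}}{(k-2)!}\int_0^\infty c_m(y)e^{-2\pi my}y^{k-2}\,dy=c_m$, i.e.\ $\phi_0=\sum_{n\ge 1}c_ne(nz)$. This proves the lemma when $c_0=0$, which is the only case the paper uses (it applies the lemma to rapidly decreasing $\Phi$). For $c_0\neq 0$, choose (possible since $k>2$) a holomorphic Eisenstein series $E\in M_k(\Gamma)$ with constant term $1$ at $\infty$; then $\varPhi-c_0E$ satisfies (1)--(2) with constant $0$, the evaluation $\int_0^\infty e^{-4\pi ny}y^{k-2}\,dy=\Gamma(k-1)/(4\pi n)^{k-1}$ shows the coefficient recipe reproduces $E$ from itself, and $\langle f,E\rangle_\Gamma=0$ for cusp forms $f$; hence $\phi=c_0E+\phi_1$, with $\phi_1$ the projection of $\varPhi-c_0E$, lies in $M_k(\Gamma)$, has Fourier coefficients exactly $c_n$, and pairs with every $f \in S_k(\Gamma)$ as $\varPhi$ does. (One pedantic caveat, present already in your unfolding step: for $\Gamma$ with several cusps, hypothesis (2) is needed at every cusp, not only at $\infty$; this holds in all of the paper's applications.)
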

Any rapidly decreasing function $\Phi: \mathcal{H} \rightarrow \mathbb{C}$ which satisfies  hypothesis (1)
of Lemma~\ref{hol}, automatically satisfies hypothesis (2) with $c_0 = 0$. 
For such $\Phi$, we set  $$\mathcal{H}ol(\Phi) : = \phi,$$ 
where $\phi$ is as defined in Lemma~\ref{hol}. It is easy to see that $\phi$ is a cusp form. Recall that the 
elements of $\mathcal{N}S_{k}^{r}(N,\chi)$ are rapidly decreasing.
%satisfies the hypotheses of Lemma~\ref{hol},
%with $c_0 = 0$. 
The definition of $\mathcal{H}ol$ given just above in fact extends the 
definition of the holomorphic projection $\mathcal{H}ol$ given in \eqref{HolN}, by the uniqueness part of 
Lemma~\ref{lemma2.10}.

%we know that  if $g \in \mathcal{N}S_{k}^{r}(N,\chi)$ with $r<k/2$, then 
%$\mathcal{H}$ol$(g)$ is the unique  cusp form with the property $\langle f, g \rangle_{N} = 
%\langle f , \mathcal{H}ol(g) \rangle_{N},\ \forall \ f \in S_{k}(N,\chi)$. 
%Further, if 
%$f = \sum_{n=0}^{\infty} c_{n}(y) e(nx)$ satisfies the hypotheses of Lemma  \ref{hol}, then we have 
%$\mathcal{H}ol(f)= \sum_{n=1}^{\infty} c_{n} e(nz)$. For any rapidly decreasing function $\Phi: \mathcal{H} 
%\rightarrow \mathbb{C}$ satisfying hypotheses of Lemma \ref{hol}, we define  $\mathcal{H}ol(\Phi):=\phi$, 
%where $\phi$ is as defined  in Lemma \ref{hol}. It can  be checked that $\mathcal{H}ol$ commutes with 
%$\vert_{k}$ action of $\gl_{2}^{+}(\mathbb{R})$. This extends the definition of the  holomorphic projection. 

We now state a result %about Eisenstein series 
which will enable one to apply the lemma above.
%to  $E_{k}(z,s;\chi,\chi_{0})$.
%
\begin{lemma}\label{Lemmabounds}
       Let $k$, $N$ be a positive integers and $\chi$ a Dirichlet character mod $N$. Then
       \begin{enumerate}[label={\emph{(\arabic*)}}]
	      \item If $h \in S_{k}(N,\chi)$, then  $\lvert (h \vert_{k} \gamma)(z)\rvert  =  O(y^{-B})$, 
	            for all positive real numbers $B$ and all $\gamma \in \SL_{2}(\mathbb{Z})$, 
	            as $y \rightarrow \infty$. In particular $h$ is rapidly decreasing.
	      \item For any compact set $T \subset \mathbb{R}$ and $\gamma \in \SL_{2}(\mathbb{Z})$, there 
	            exists positive real numbers $A$ and $B$ such that if $\chi \neq \chi_{0}$ 
		    $$
			\lvert E_{k}(z,s;\chi,\chi_{0})\vert_{k}\gamma \rvert \leq A(1+y^{-B}), 
			\mathrm{ \ as}\  y \rightarrow \infty \mathrm{ \ as \ long \ as \  Re}(z) \in T.
		    $$
	\end{enumerate}   
\end{lemma}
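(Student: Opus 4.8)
The plan is to handle the two parts by different means: part (1) follows from the vanishing of the constant Fourier coefficient of a cusp form at every cusp, while part (2) follows by reindexing the defining lattice sum after slashing and estimating it term by term.

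For part (1), fix $\gamma\in\SL_{2}(\Z)$. Because $\Gamma(N)$ is normal in $\SL_{2}(\Z)$ and each $\delta\in\Gamma(N)$ has lower-right entry $\equiv 1\bmod N$, we have $h\vert_{k}\delta=\chi(\delta)h=h$; applying this to the conjugate $\gamma\begin{psmallmatrix}1&N\\0&1\end{psmallmatrix}\gamma^{-1}\in\Gamma(N)$ shows that $h\vert_{k}\gamma$ is invariant under $z\mapsto z+N$. Thus $h\vert_{k}\gamma$ admits a Fourier expansion in $e(z/N)$, and since $h$ is a cusp form this expansion has neither negative nor constant terms, so $(h\vert_{k}\gamma)(z)=\sum_{n\geq 1}c_{n}e(nz/N)$. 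Bounding $c_{n}$ by integrating $h\vert_{k}\gamma$ over a horizontal segment at height $y_{0}$ gives $\lvert c_{n}\rvert=O(e^{2\pi n y_{0}/N})$, whence for $y>y_{0}$ the series is dominated by a geometric series and $\lvert (h\vert_{k}\gamma)(z)\rvert=O(e^{-2\pi(y-y_{0})/N})$. As $e^{-cy}$ decays faster than any power $y^{-B}$, this gives $\lvert(h\vert_{k}\gamma)(z)\rvert=O(y^{-B})$ for every $B>0$, i.e. $h$ is rapidly decreasing.

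For part (2), I would first compute the slash. Writing $\gamma=\begin{psmallmatrix}a&b\\c_{0}&d_{0}\end{psmallmatrix}$ and using $\mathrm{Im}(\gamma z)=y\lvert c_{0}z+d_{0}\rvert^{-2}$ together with $c\gamma z+d=(c_{0}z+d_{0})^{-1}\big((ca+dc_{0})z+(cb+dd_{0})\big)$, the factors $c_{0}z+d_{0}$ cancel, and reindexing $(c',d')=(c,d)\gamma$ yields
\begin{equation*}
  \big(E_{k}(z,s;\chi,\chi_{0})\vert_{k}\gamma\big)(z)=y^{s}\sideset{}{'}\sum_{c',d'}\chi(c'd_{0}-d'c_{0})\,(c'z+d')^{-k}\lvert c'z+d'\rvert^{-2s}.
\end{equation*}
I then split the sum according to whether $c'=0$. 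When $\gamma$ is the identity the $c'=0$ terms carry the factor $\chi(c')=\chi(0)=0$ and vanish precisely because $\chi\neq\chi_{0}$; in general they sum to a constant multiple $\chi(-c_{0})\big(1+\chi(-1)(-1)^{k}\big)L(k+2s,\chi)$ of $y^{s}$, which is bounded as $y\to\infty$ in the range $\mathrm{Re}(s)\leq 0$ relevant to the applications ($s=-r$).

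The substance lies in the terms with $c'\neq 0$. There $\lvert c'z+d'\rvert^{2}=(c'x+d')^{2}+(c'y)^{2}\geq (c'y)^{2}$, and the absolute value of each term is $\lvert c'z+d'\rvert^{-k-2\mathrm{Re}(s)}$. Summing over $d'$ by comparison with $\int_{\R}\big((c'x+t)^{2}+(c'y)^{2}\big)^{-(k+2\mathrm{Re}(s))/2}\,dt\asymp (\lvert c'\rvert y)^{1-k-2\mathrm{Re}(s)}$, uniformly for $x\in T$, and then over $c'\neq 0$, the double sum converges exactly because $\mathrm{Re}(k+2s)>2$ (the convergence range of the Eisenstein series). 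Reinstating the prefactor $y^{\mathrm{Re}(s)}$ gives a bound of the form $C\,y^{1-k-\mathrm{Re}(s)}$ with $1-k-\mathrm{Re}(s)<0$, so this part decays; combined with the bounded constant term this yields $\lvert E_{k}(z,s;\chi,\chi_{0})\vert_{k}\gamma\rvert\leq A(1+y^{-B})$. I expect the main obstacle to be the uniform lattice-sum estimate over $x\in T$ and the careful bookkeeping of the constant term, which is the only non-decaying contribution and is exactly what forces the use of $\chi\neq\chi_{0}$ and $\mathrm{Re}(s)\leq 0$.
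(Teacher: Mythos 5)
Your part (1) is correct and is in substance the paper's own argument: the paper proves it by noting that $h$ vanishes at the cusps and invoking \eqref{slowly increasing}, which is exactly the Fourier-expansion-plus-exponential-decay argument you spell out (normality of $\Gamma(N)$ giving the $e(z/N)$-expansion of $h\vert_{k}\gamma$ with no constant term, then geometric-series domination).

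Part (2), however, has a genuine gap. Your entire estimate works with the defining lattice sum, and you invoke absolute convergence, i.e.\ $\mathrm{Re}(k+2s)>2$, to carry out the sum over $c'\neq 0$ (besides restricting to $\mathrm{Re}(s)\leq 0$ for the $c'=0$ term). But in the lemma $E_{k}(z,s;\chi,\chi_{0})$ means the meromorphic continuation in $s$ (see the sentence following \eqref{Eisenstein series}), and the place where the paper actually needs part (2) is precisely outside the region of convergence: in $\S$5 the holomorphic projection $\mathcal{H}ol(K^{*}(s-k+1))$ must be defined via Lemma~\ref{hol} for integers $l\leq s\leq (k+l)/2$, where the relevant Eisenstein series is $E_{k-l}(z,s-k+1;\cdot,\chi_{0})$ of weight $k-l$, and there $(k-l)+2(s-k+1)=2s-k-l+2\leq 2$, so the defining series diverges. (For $s>(k+l)/2$ the paper can instead appeal to Theorem~\ref{thm2.8}, so the divergent range is exactly the one that matters.) A term-by-term bound of a divergent series proves nothing in that range; this is why the paper's proof simply defers to \cite[$\S$9.3, Lemma 3]{hida_93}, whose proof goes through the Fourier--Whittaker expansion of the continued Eisenstein series (cf.\ Theorem~\ref{thm5.2}): after slashing, the constant term has the shape $C_{\gamma}(s)y^{s}+D_{\gamma}(s)y^{1-k-s}$, which is bounded for $\mathrm{Re}(s)\leq 0$ in the relevant strip, and the nonconstant terms decay exponentially in $y$, uniformly for $\mathrm{Re}(z)$ in a compact set, using bounds for $W(y;\alpha,\beta)$. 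To repair your proof you would have to replace the lattice-sum estimate by this Fourier-expansion estimate; as written, your argument establishes the lemma only for $\mathrm{Re}(k+2s)>2$, $\mathrm{Re}(s)\le 0$, which is not the statement the paper uses.
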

\begin{proof}
       Observe that if $h \in S_{k}(N,\chi)$, then $h$ vanishes at the cusps. Now, the first part of the 
       lemma follows from \eqref{slowly increasing}. For the second part see \cite[$\S$9.3, Lemma 3]{hida_93}.
\end{proof}
 It follows from Lemma~\ref{Lemmabounds} that if $h$ is a (holomorphic) cusp form of weight $2 \leq l < k $
(in our application below $h$ will be the slash of a twist of $g$ from the Introduction), then 
$h(z) E_{k-l}(z,s;\chi,\chi_{0})$ has weight $k > 2$ and satisfies the hypotheses of Lemma \ref{hol}, with 
$c_{0}=0$. So $\mathcal{H}ol(h(z)E_{k-l}(z,s;\chi,\chi_{0}))$ is defined, and we can calculate its Fourier expansion 
using Lemma~\ref{hol} if we know the Fourier expansion of $h(z)E_{k-l}(z,s;\chi,\chi_{0})$.
\section{Distributions and Measures}
In this section, we define distributions and measures following \cite{Panrankin}. Most of the material 
covered in this section can also be found in \cite{Wash97}, \cite{MSD}. Finally, we state the abstract 
Kummer congruences which is the key tool used in the construction of the $p$-adic $L$-function.   
\subsection{Distributions}
 Let $Y$ be a  
compact, Hausdorff and totally disconnected topological space. Then $Y$ is a projective limit of finite discrete 
spaces $Y_i$, 
\begin{equation}
       Y = \varprojlim \ Y_{i}, 
\end{equation}
with respect to transition maps $\pi_{ij}: Y_{i} \rightarrow Y_{j}$, for $i \geq j$, $i$, $j$ in some directed set $I$. 
We assume that the $\pi_{ij}$ are surjections, so the canonical maps $\pi_{i} : Y \rightarrow Y_{i}$ are
projections. Let $R$ be a commutative ring and let Step($Y,R$) be the set of $R$-valued 
locally constant functions on $Y$. % which are compactly supported.
\begin{definition}
       A distribution on $Y$ with values in an $R$-module $\mathcal{A}$  is a homomorphism of 
       $R$-modules 
      $$
	  \mu: \mathrm{Step}(Y,R) \rightarrow \mathcal{A}.
      $$
\end{definition}
\noindent We use the notation 
\begin{align*}
       \mu(\varphi)= \int\limits_{Y}^{} \varphi \ d \mu = \int\limits_{Y} \varphi(y) \ d\mu(y),
\end{align*}
for $\varphi \in \mathrm{Step}(Y,R)$. Any distribution $\mu$ can be given by a system of functions 
$\lbrace \mu^{(i)}:Y_{i} \rightarrow \mathcal{A} \rbrace$, satisfying the following finite additivity
condition:
\begin{equation}\label{finadd}
       \mu^{(j)}(y)= \sum\limits_{x \in \pi_{ij}^{-1}(y)} \mu^{(i)}(x), \  \forall \ y \in Y_{j}, \  
        x \in Y_{i}, \ i \geq j.
\end{equation}
Indeed, given such a system of functions $\lbrace \mu^{(i)}: Y_{i} \rightarrow 
\mathcal{A} \mid i \in I \rbrace$, if $\delta_{i,x}$ is the characteristic function of the inverse image 
$\pi_{i}^{-1}(x) \subset Y$, for $x \in Y_{i}$, define
$$
	\mu(\delta_{i,x}) = \mu^{(i)}(x)
$$
and extend the definition of $\mu$ to all of $\mathrm{Step}(Y,R)$ by linearity. 
Conversely, given  a distribution $\mu$, in order to construct such a system, set
$\mu^{(i)}(x) =  \mu(\delta_{i,x})  \in  \mathcal{A}, \  \forall \ x \in Y_{i}$. 

%Each function $\varphi_{j} : Y_{j} \rightarrow R$ defines a family of functions 
%$ \lbrace \varphi_{i} := \varphi_{j}\circ \pi_{ij} : Y_i \rightarrow R \mid i \geq j \rbrace$. 
%Further, let $ \varphi := \varphi_{j} \circ \pi_{j} \in$ Step$(Y,R)$. 
It can be checked that a system of functions $ \lbrace \mu^{(i)}:Y_{i} \rightarrow\mathcal{A} \rbrace$
 satisfies \eqref{finadd} if and only if for all $j \in I$ and all $\varphi_{j}:Y_{j} \rightarrow R $, 
\begin{align}\label{finadd1}
        \mathrm{the \ sum} \ %\ \mu(\varphi):= 
          \sum_{x \in Y_{i}}^{} \varphi_{i}(x) \mu^{(i)}(x) \  
          \mathrm{does\ not \ depend \ on \ } i,\ \forall \ i \geq j, 
\end{align}
where $\varphi_{i} := \varphi_{j}\circ \pi_{ij} : Y_i \rightarrow R$.
If $\mu$ is the corresponding distribution and $\varphi = \varphi_{j} \circ \pi_{j} \in$ Step$(Y,R)$, 
then $\mu(\varphi)$ is just the sum above. 
If $Y = G = \varprojlim \ G_{i}$ is a profinite abelian group and $R$ is an integral domain containing all 
roots of unity of order dividing the cardinality of $G$ (perhaps a transfinite cardinal, in which case 
$R$ contains all roots of unity), then one needs to 
verify \eqref{finadd1} only for all characters of finite order $\chi: G \rightarrow R^{\times}$,  since the 
orthogonality relations imply that their linear span over $R \otimes \mathbb{Q}$ coincides with 
Step($Y,R \otimes \mathbb{Q}$) (see \cite{MSD}).  
\begin{example}\label{example}
	Let  $p$ be an odd prime. Then $\mathbb{Z}_{p}^{\times} = \varprojlim \left( \mathbb{Z}/p^{n} 
	\mathbb{Z} \right) ^{\times}$. We consider 
	\begin{align*}
		X_{p} = X(\mathbb{Z}_{p}^{\times}) = \mathrm{Hom}_{\mathrm{cont}} (\mathbb{Z}_{p}^{\times}, 
		\mathbb{C}_{p}^{\times}), \ 
		\mathcal{B} = \lbrace \chi \in X(\mathbb{Z}_{p}^{\times}) \mid  \chi \ 
		\mathrm{has \ finite \ order}\rbrace.
	\end{align*}
	%As  noted earlier every element of $\mathcal{B}$ can be considered as a Dirichlet character. 
    We claim that $\mathcal{B}$ is a basis for Step$(\mathbb{Z}_{p}^{\times} , \mathbb{C}_{p})$ as a 
    $\mathbb{C}_{p}$-vector space. For 	every $x \in ( \mathbb{Z}/p^{n} \mathbb{Z} ) ^{\times}$, let 
    $\delta_{n,x}$ be the characteristic function  of the basic open set 
	$\lbrace a \in \mathbb{Z}_{p}^{\times} \mid a \equiv x \ \mathrm{mod} \ p^{n} \rbrace$. Then, by the 
	orthogonality relations, we have 
	\begin{align}
	       \delta_{n,x} = \frac{1}{\varphi (p^{n})} \sum\limits_{\chi \in X((\mathbb{Z}/ p^{n} 
	       \mathbb{Z})^{\times})} \bar{\chi} (x)\ \chi. 
	\end{align}
	%where $\delta_{n,x}$ is the characteristic function of the basic open set 
	%$\lbrace a \in \mathbb{Z}_{p}^{\times} \mid a \equiv x \ \mathrm{mod} \ p^{n} \rbrace$.
        Since 
	every locally constant function $\mathbb{Z}_{p}^{\times} \rightarrow \mathbb{C}_{p}$ is a $\mathbb{C}_{p}$-linear 
	combination of characteristic functions, we see that $\mathcal{B}$ spans 
	Step$(\mathbb{Z}_{p}^{\times}, \mathbb{C}_{p})$. For linear independence, let $\chi_{1}, \ldots , \chi_{n} \in \mathcal{B}$ 
	and suppose $\sum_{i=1}^{n } a_{i} \chi_{i} = 0$, with $a_{i} \in \mathbb{C}_{p}$. By choosing  $m$ 
	sufficiently large we may assume $\chi_{i} \in X((\mathbb{Z}/ p^{m} \mathbb{Z})^ \times)$, for all 
	$i$.  By linear independence of characters, we have $a_{i} = 0$, for all $i$. 
        %Therefore, $\mathcal{B}$ is a basis of Step$(\mathbb{Z}_{p}^{\times} , \mathbb{C}_{p})$.
\end{example}
\subsection{Measures}
Let $R$ be a topological ring with  topology induced by a norm.  Let $\mathcal{C}(Y,R)$  denote the 
$R$-module of continuous  $R$-valued functions on $Y$ and equip $\mathcal{C}(Y,R)$ with the 
corresponding sup norm topology. In this article we will take $R= \mathbb{C}$ (or) $\mathbb{C}_{p}$
(or) $\mathcal{O}_{p}:= \lbrace x \in \mathbb{C}_{p} \mid \lvert x \rvert_{p} \leq 1 \rbrace $. 
\begin{definition}
	A measure on $Y$ with values in a topological $R$-module $\mathcal{A} $ is a continuous 
	homomorphism of $R$-modules $\mu: \mathcal{C}(Y,R) \rightarrow \mathcal{A}$.
\end{definition}
The restriction of a measure $\mu$ to the $R$-submodule Step($Y,R$) $\subset$ $\mathcal{C}(Y,R)$ is a 
distribution, which we denote by the same symbol. Since $Y$ is compact, we have Step($Y,R$) is dense
in $\mathcal{C}(Y,R)$. So every measure is uniquely determined by its values on  Step($Y,R$).
We take for $R$ a closed subring of $\mathbb{C}_{p}$, and 
let $\mathcal{A}$ be a complete $R$-module with topology induced by a norm 
$\mid \cdot \mid _{\mathcal{A}}$ on $\mathcal{A}$. We further assume that $\mid \cdot \mid _{\mathcal{A}}$  
is compatible with $\mid \cdot \mid _{p}$, i.e., $\lvert ra \rvert_{\mathcal{A}} = \lvert r \rvert _{p} 
\lvert a \rvert _{\mathcal{A}} $ for all $r \in R$ and $a \in \mathcal{A}$. 
Then the condition that a distribution  
$\lbrace \mu^{(i)}:Y_{i} \rightarrow \mathcal{A} \rbrace$ gives rise to an $\mathcal{A}$-valued measure
on $Y$ is equivalent to the condition that the $\mu^{(i)}$ are bounded, i.e., there is a uniform constant
$B>0$ such that for all $i \in I$ and all $x \in Y_{i}$, we have 
$\lvert \mu^{(i)}(x)\rvert_{\mathcal{A}} < B$. The proof of this fact is easy using the non-archimedean property 
and completeness of the norm $\mid \cdot \mid_\mathcal{A}$ (see \cite[Proposition 12.1]{Wash97}). In particular, if
$\mathcal{A}= R= \mathcal{O}_{p}= \lbrace x \in \mathbb{C}_{p} \mid \lvert x \rvert_{p} \leq 1 \rbrace$ is 
the ring of integers of $\mathbb{C}_{p}$, then distributions are the same as 
measures. The most important tool in the construction of the $p$-adic $L$-function 
is the following criterion for the existence of a measure with prescribed properties. 
\begin{theorem} \textbf{\emph{(}The abstract Kummer congruences\emph{)}}
                  \emph{(\cite[Proposition 4.0.6]{Katzp-adic}, \cite{Panchiskhinbook})}
	Let $\lbrace f_{i} \rbrace$ be a system of continuous $\mathcal{O}_{p}$-valued functions on $Y$ such that
	 the $\mathbb{C}_{p}$-linear  span of $\lbrace f_{i} \rbrace$ is dense  in $\mathcal{C}(Y,\mathbb{C}_{p})$. Let 
	 $\lbrace a_{i} \rbrace$ be any system of elements with $a_{i} \in \mathcal{O}_{p}$. Then the existence of an
	  $\mathcal{O}_{p}$-valued measure $\mu$ on $Y$ \emph{(}i.e., $\mu (\mathcal{C}(Y,\mathcal{O}_{p})) 
	  \subset \mathcal{O}_p$\emph{)}  with the property that
	 \begin{equation*}
		\int_{Y} f_{i} \ d\mu =a_{i}
	\end{equation*}
	is equivalent to the following: for an arbitrary choice of elements $b_{i} \in \mathbb{C}_{p}$, 
	almost all of which vanish, and any $n \geq 0$, we have the following implication of congruences:
	\begin{equation}\label{3.4}
		\sum\limits_{i}b_{i}f_{i}(y) \in p^{n} \mathcal{O}_{p},  \ \forall \ y \in Y \ \Longrightarrow \  
		\sum\limits_{i} b_{i}a_{i} \in p^{n}\mathcal{O}_{p}.
	\end{equation}
\end{theorem}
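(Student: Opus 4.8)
The plan is to prove the two implications separately, the forward one being essentially formal and the converse containing the real content, namely manufacturing a bounded functional out of the congruences.

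First I would dispose of necessity. Suppose such a measure $\mu$ exists, and let $b_i \in \mathbb{C}_{p}$ (almost all zero) satisfy $\sum_i b_i f_i(y) \in p^{n}\mathcal{O}_{p}$ for every $y \in Y$. Set $h = \sum_i b_i f_i$, a continuous function since the sum is finite. The hypothesis says $h$ takes values in $p^{n}\mathcal{O}_{p}$, so $p^{-n} h \in \mathcal{C}(Y,\mathcal{O}_{p})$; since $\mu$ is $\mathcal{O}_{p}$-valued we get $\mu(p^{-n}h) \in \mathcal{O}_{p}$, while $\mathbb{C}_{p}$-linearity gives $\mu(h) = \sum_i b_i \mu(f_i) = \sum_i b_i a_i$. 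Hence $\sum_i b_i a_i = p^{n}\mu(p^{-n}h) \in p^{n}\mathcal{O}_{p}$, which is exactly the required implication.

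For the converse, let $V \subset \mathcal{C}(Y,\mathbb{C}_{p})$ be the $\mathbb{C}_{p}$-linear span of the $f_i$, and attempt to define $\mu$ on $V$ by $\mu\bigl(\sum_i b_i f_i\bigr) := \sum_i b_i a_i$. The first task is well-definedness: if $\sum_i b_i f_i$ is the zero function, then it lies in $p^{n}\mathcal{O}_{p}$ pointwise for \emph{every} $n \ge 0$, so the congruence hypothesis forces $\sum_i b_i a_i \in \bigcap_{n\ge 0} p^{n}\mathcal{O}_{p} = \{0\}$. Thus $\mu(g)$ depends only on $g \in V$ and not on the chosen representation, so $\mu$ is a well-defined $\mathbb{C}_{p}$-linear functional on $V$ with $\mu(f_i)=a_i$. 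The crux is then to show $\mu$ is bounded for the sup norm, with operator norm at most $1$. I would first record the integer-level bound: if $g=\sum_i b_i f_i \in V$ satisfies $\lVert g\rVert_{\infty}\le p^{-n}$, i.e.\ $g(y)\in p^{n}\mathcal{O}_{p}$ for all $y$, then the hypothesis gives $\lvert \mu(g)\rvert_{p}\le p^{-n}$. To upgrade this to $\lvert \mu(g)\rvert_{p}\le \lVert g\rVert_{\infty}$ for arbitrary nonzero $g$, I would use a scaling trick: since $Y$ is compact and $g$ continuous, $\lVert g\rVert_{\infty}=\lvert g(y_0)\rvert_{p}$ is attained and lies in the value group $\lvert \mathbb{C}_{p}^{\times}\rvert_{p}=p^{\mathbb{Q}}$, so I may choose $\lambda \in \mathbb{C}_{p}^{\times}$ with $\lvert \lambda\rvert_{p}=\lVert g\rVert_{\infty}^{-1}$. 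Then $\lVert \lambda g\rVert_{\infty}=1$, so the $n=0$ case yields $\lvert \mu(\lambda g)\rvert_{p}\le 1$, and dividing by $\lvert \lambda\rvert_{p}$ gives $\lvert \mu(g)\rvert_{p}\le \lVert g\rVert_{\infty}$.

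Finally, this operator-norm bound makes $\mu$ uniformly continuous on the dense subspace $V$, so by completeness of $\mathbb{C}_{p}$ it extends uniquely to a continuous $\mathbb{C}_{p}$-linear functional $\mu:\mathcal{C}(Y,\mathbb{C}_{p})\to\mathbb{C}_{p}$ still satisfying $\lvert \mu(g)\rvert_{p}\le \lVert g\rVert_{\infty}$; in particular any $g\in\mathcal{C}(Y,\mathcal{O}_{p})$ has $\mu(g)\in\mathcal{O}_{p}$, so $\mu$ is an $\mathcal{O}_{p}$-valued measure, and $\int_Y f_i\,d\mu=a_i$ by construction (and it is unique by density of $V$). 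I expect the main obstacle to be precisely the passage from the discrete congruences, which directly control $\mu$ only at the integer powers $p^{n}$, to the genuine sup-norm estimate; the scaling argument together with the fact that the value group $p^{\mathbb{Q}}$ is exactly $\lvert \mathbb{C}_{p}^{\times}\rvert_{p}$ is what closes this gap, and one must note that $\lVert g\rVert_{\infty}$ is attained (hence a true element of the value group) rather than merely approached.
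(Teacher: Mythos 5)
Your proof is correct, but the sufficiency direction is organized differently from the paper's. You define the functional on the $\mathbb{C}_p$-linear span $V$ of the $f_i$, prove well-definedness from $\bigcap_{n}p^{n}\mathcal{O}_{p}=\{0\}$, upgrade the integer-level congruences to the full operator-norm bound $\lvert\mu(g)\rvert_{p}\le\lVert g\rVert_{\infty}$ by a scaling trick (using that the sup norm is attained on the compact space $Y$ and lies in the value group $\lvert\mathbb{C}_{p}^{\times}\rvert_{p}=p^{\mathbb{Q}}$), and then invoke the standard extension theorem for bounded linear maps into a complete field. The paper never isolates $V$ or any norm bound: for each $f\in\mathcal{C}(Y,\mathcal{O}_{p})$ and each $n$ it picks, by density, a finite combination $\sum_{i}b_{i}f_{i}$ approximating $f$ within $p^{n}$, declares $\int_{Y}f\,d\mu \bmod p^{n}$ to be $\sum_{i}b_{i}a_{i}$, uses the congruence hypothesis (applied to the difference of two approximations) to see this is independent of the choice and coherent as $n$ grows, and assembles the integral as an element of $\varprojlim\,\mathcal{O}_{p}/p^{n}\mathcal{O}_{p}=\mathcal{O}_{p}$, finally extending to $\mathcal{C}(Y,\mathbb{C}_{p})$ by rescaling with integer powers of $p$. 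The trade-off: your route is the cleaner functional-analytic one and yields the explicit norm estimate, but it leans on two facts the paper never needs — attainment of the sup norm and the structure of the value group of $\mathbb{C}_{p}$ — whereas the paper's mod-$p^{n}$/inverse-limit construction stays entirely within congruence arithmetic and only ever scales by powers of $p$. Both arguments use the hypothesis in exactly the same two ways (once with $n=0$ for integrality, once with general $n$ for compatibility), so the underlying mechanism is shared; the packaging is what differs.
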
	
\begin{proof}
      The necessity is obvious. Indeed if $\sum\limits_{i}b_{i}f_{i}(y) \in p^{n} \mathcal{O}_{p}$, then
       \begin{align*}
	      \sum\limits_{i} b_{i}a_{i} 
	       & = \sum\limits_{i} \int\limits_{Y} b_{i}f_{i} \ d \mu   \\
	       & = p^{n} \int\limits_{Y} \left( p^{-n}\sum\limits_{i} b_{i}f_{i} \right)d \mu 
	           \ \in p^{n} \mathcal{O}_{p}.
       \end{align*}
       In order to prove the sufficiency we need to construct a measure $\mu$ from the numbers $a_{i}$. 
       For a function $f \in \mathcal{C}(Y,\mathcal{O}_{p})$ and a positive integer $n$, there exists 
       $b_{i} \in \mathbb{C}_{p}$ such that $b_{i}=0$ for almost all $i$, and
      \begin{align*}
             f- \sum\limits_{i}b_{i}f_{i} \in \mathcal{C}(Y,p^{n} \mathcal{O}_{p})
       \end{align*}
      by the density of the $\mathbb{C}_{p}$-span of  the $\lbrace f_{i} \rbrace$ in 
      $\mathcal{C}(Y,\mathbb{C}_{p})$. Now, we claim that the value $\sum_i b_{i}a_{i}$ belongs to 
      $\mathcal{O}_{p}$ and is well defined modulo $p^{n}$, i.e., it doesn't depend on the choice of 
      $b_{i}$. Since $f \in \mathcal{C}(Y,\mathcal{O}_{p})$, clearly 
      $\sum_{i}b_{i}f_{i} \in \mathcal{C}(Y,\mathcal{O}_{p})$. 
      Therefore, by \eqref{3.4}, we have $\sum_{i}b_{i}a_{i} \in \mathcal{O}_{p}$.  Let 
      $c_{i} \in \mathbb{C}_{p}$ be another set of numbers with $c_{i} \neq 0$ only for finitely many 
      $i$ such that $f- \sum_{i}^{} c_{i}f_{i} \in \mathcal{C}(Y, p^n \mathcal{O}_p)$. Then 
     \begin{align*}
	   \sum\limits_{i}(c_{i}-b_i)f_{i} % =  \sum\limits_{i}c_{i}f_{i} - \sum\limits_{i}b_{i}f_{i} 
	    = (f- \sum\limits_{i}b_{i}f_{i})- (f- \sum\limits_{i}c_{i}f_{i} ) \in  
	   \mathcal{C}(Y,p^{n} \mathcal{O}_{p}). %= p^{n} \mathcal{C}(Y,\mathcal{O}_{p}).
      \end{align*}
     By  \eqref{3.4}, we have 
     $\sum_i c_{i}a_{i}  \equiv \sum_i b_{i}a_{i} \ \mathrm{mod} \ p^{n} \mathcal{O}_{p}$. 
     Therefore, $\sum_{i}^{} b_{i}a_{i}$ is well defined modulo $p^{n}$. We denote this value by 
     $ \int_{Y} f d\mu \ \text{mod}\ p^{n} $. Further, the above argument shows 
     $ (\int_{Y} f d\mu \ \text{ mod}\ p^{n+1})  \equiv (\int_{Y} f d\mu \ \text{mod}\ p^{n})$ mod 
     $p^{n}\mathcal{O}_{p}$. So, we may define $\mu$ on  $\mathcal{C}(Y,\mathcal{O}_{p})$ via 
     $$
         \int_{Y} f \ d\mu = %\underset{n \to \infty}{\lim}  
         \left\{ \left( \int_{Y} f \ d \mu \ \mathrm{mod}\ p^{n}\right) \right\}_{n \geq 1}    
         \in \varprojlim  \> \mathcal{O}_{p}/p^{n}\mathcal{O}_{p}  = \mathcal{O}_{p}.
     $$
     Since every element of $\mathcal{C}(Y,\mathbb{C}_{p})$ is bounded, by rescaling, the above definition 
     of $\mu$ extends to all  of $\mathcal{C}(Y,\mathbb{C}_{p})$. A check shows 
     $\mu :  \mathcal{C}(Y,\mathbb{C}_{p}) \rightarrow \mathbb{C}_{p}$ is continuous, so $\mu$ is an 
     $\mathcal{O}_{p}$-valued measure. Clearly $\int_{Y} f_{i} \  d\mu=a_{i}$.
\end{proof}
Recall that $X_p = \mathrm{Hom}_{\mathrm{cont}} (\mathbb{Z}_{p}^{\times},  \mathbb{C}_{p}^\times)$ has 
an analytic structure described in the Introduction. If 
$\mu :  \mathcal{C}( \mathbb{Z}_p^\times,\mathbb{C}_{p}) \rightarrow \mathbb{C}_{p}$ is a measure, 
the non-archimedean Mellin transform of $\mu$,  defined by
\begin{align}\label{3.5}
       L_{\mu}(\chi) = \  \mu(\chi) = \ \int_{\mathbb{Z}_{p}^{\times}} \chi \ d\mu, 
       \ \forall \  \chi \in X_{p},
\end{align}
gives a bounded $\mathbb{C}_{p}$-analytic function $L_{\mu}:X_{p}\longrightarrow \mathbb{C}_{p}$
(see \cite[$\S$ 7.4]{MSD}, \cite[Theorem 8.7]{Manin}). 
%In fact, boundedness of the function $L_\mu$ follows from the definition, and analyticity reflects the general property that the 
Here `analytic' means that the integral \eqref{3.5} depends analytically on the parameter $\chi \in X_{p}$. 
The converse is also true: any bounded $\mathbb{C}_p$-analytic function on $X_{p}$ is the Mellin transform 
of some measure $\mu$. These measures with the convolution operation form an algebra, which essentially 
coincides with the Iwasawa algebra (see  \cite[$\S$(1.4.3), $\S$(1.5.2)]{Panchiskhinbook}). 
%Therefore defining a bounded analytic function on $X_{p}$ is equivalent to constructing a measure on 
%$\mathbb{Z}_{p}^{\times}$. 
%
\section{ Construction of Complex-valued Distributions  } 
From now on, let $f$ and $g$ be the primitive cusp forms as in the Introduction. 
In this section we define two complex-valued distributions associated to $f$ and $g$ and compare them. 
 
Let $p$ be a prime as in the Introduction. The $p$-stabilization of $f$ is defined by
\begin{equation}\label{4.1}
	f_{0}(z)= f(z)-\alpha'(p) f(pz) = f(z)-\alpha'(p) (f\vert V_{p})(z)   ,
\end{equation}
where as before $f(z) = \sum_{n=1}^{\infty}a(n,f)e(nz) \in S_{k}(C_{f},\psi)$. 
Let $f_{0}(z) = \sum_{n=1}^{\infty} a(n,f_{0}) e(nz)$ be the Fourier expansion of $f_{0}$.
Comparing the Fourier coefficients in  \eqref{4.1}, we get $a(n,f_{0}) = a(n,f)-\alpha'(p)a(n/p,f)$. 
Hence,  we have the following identity for the corresponding Dirichlet series:
\begin{equation} \label{4.2}
	  L(s,f_{0})=  \sum\limits_{n=1}^{\infty} a(n,f_{0}) n^{-s} = (1- \alpha'(p)p^{-s}) 
	  \bigg(\sum\limits_{n=1}^{\infty}a(n,f)n^{-s} \bigg)= (1- \alpha'(p)p^{-s}) L(s,f).
\end{equation}
From \eqref{4.1}, it follows that $f_{0} \in S_{k}(pC_{f},\psi)$ and from
\eqref{2.8} and \eqref{4.2}, we have
\begin{equation}\label{4.3}
\begin{split}
	L(s,f_{0})  & = (1- \alpha'(p)p^{-s})\left( \prod\limits_{ q \ \mathrm{ prime}} 
	     (1- \alpha(q) q^{-s})^{-1}(1- \alpha'(q) q^{-s})^{-1}\right) \\
	& = ( 1- \alpha(p)p^{-s})^{-1} \Bigg( \sum\limits_{\stackrel{n=1}{p \nmid n}}^{\infty} a(n,f) n^{-s}\Bigg ).
	\end{split}
\end{equation}
Thus we have the following multiplicative relation
\begin{equation}\label{(4.5)}
	a(p^{r}n,f_{0})=\alpha(p^{r})a(n,f_{0}), \ \forall \ r, n \geq 0.
\end{equation} 
Hence, $f_{0}$ is a $U_{p}$-eigenvector with eigenvalue $\alpha(p)$, i.e., 
$f_{0}\vert U_{p}=\alpha(p) f_{0}$.

Recall $g \in S_l(N, \omega)$. From the definition of the operators $w_d$ and $V_{d}$ given in 
Section~\ref{operators}, one checks that
\begin{equation}\label{4.17}
g\vert w_{AB} = A^{l/2} g\vert w_{B}V_{A},
\end{equation}
where $A$, $B$ are positive integers.

Recall that complex valued Dirichlet characters $\chi$ of $p$-power conductor  
are the same as finite order characters $\chi: \mathbb{Z}_{p}^{\times}  \rightarrow \mathbb{C}^{\times}$.
As in Example $\ref*{example}$, we have $\mathcal{B} = \lbrace \chi:\mathbb{Z}_{p}^{\times}
\rightarrow \mathbb{C}^{\times} \mathrm{ \ of \ finite \ order}\rbrace $ 
forms a basis of Step$(\Zp^{\times}, \mathbb{C})$. Therefore every complex-valued function 
on $\mathcal{B}$ extends to a complex-valued  distribution on $\Zp^{\times}$.

Let $\chi : \mathbb{Z}_{p}^{\times}  \rightarrow \mathbb{C}^{\times}$
be a Dirichlet character of conductor $C_{\chi}$. 
Then $g(\chi) = \sum_{n=1 }^{\infty} \chi(n) b(n) e(nz)$ lies in $S_{l}(C_{g}C_{\chi}^{2},\omega \chi^{2})$, 
where here and below we use the convention that $\chi(n)=0$ if $n \in p\mathbb{Z}$.

For every $s \in \mathbb{C}$,  define a quantity 
$\Psi_{s}^{(M')}(\chi)$ as follows:
\begin{eqnarray}\label{4.6}
       \Psi_{s}^{(M')} (\chi) & = & \frac{(pM')^{s-l/2} C_{f}^{s-l/2}\overline{\chi}(C_{g})}
        { \Lambda(g) \alpha(pM')} \cdot 
        \frac{\Psi(s,f_{0}\vert V_{C_{f}},g(\chi)\vert w_{C_{0}M' })}{\pi^{1-l}
	\langle f , f \rangle_{C_{f}}},
\end{eqnarray} 
where $C_{0}$, $M'$ are  natural numbers satisfying:
\begin{align}\label{M'}
C_{0}=pC=pC_{f}C_{g}, \ p^{2}C_{\chi}^{2} \vert M' \  \mathrm{and} \ S(M') = \lbrace p \rbrace .  
\end{align}

A priori, the definition of $\Psi_{s}^{(M')}(\chi)$ depends on $M'$, though we show below that
it does not, whence $\Psi_{s}^{(M')}$ extends to a (well-defined) complex-valued distribution on 
$\mathbb{Z}_{p}^{\times}$. To do this, for each $s \in \C$, consider the complex-valued distribution 
$\Psi_{s}$ on $\mathbb{Z}_{p}^{\times}$ whose value on the Dirichlet character 
$\chi:  \mathbb{Z}_{p}^{\times}  \rightarrow \mathbb{C}^{\times}$ is given by:
\begin{eqnarray}\label{4.8}
      \Psi_{s}(\chi) & \coloneqq  & \frac{\omega(C_{\chi}) G(\chi)^{2} C_{\chi}^{2s-l-1}}
      {\alpha(C_{\chi})^{2}} \cdot \frac{\Psi(s,f,g^{\rho}(\overline{\chi}))}{\pi^{1-l}	
      \langle f , f 	\rangle_{C_{f}}}.
\end{eqnarray}

\begin{proposition}\label{Propn 4.1}
	 Let $p$ be an odd prime for which $f$ is a $p$-ordinary form.  
         %Let $\Psi_{s}^{(M')}(\chi)$ and $\Psi_{s}(\chi)$ be as defined above. 
         Then for every Dirichlet character $\chi: \mathbb{Z}_{p}^{\times} \rightarrow \mathbb{C}^{\times}$ 
         and positive integer $M'$ such  that  
	 $p^{2}C_{\chi}^{2} \mid M'$ and  $S(M') = \lbrace p \rbrace$, we have
	\begin{align*}
	     \Psi_{s}^{(M')}(\chi) =  \Psi_{s}(\chi).
	\end{align*}
	In particular, $\Psi_{s}^{(M')}$ does not depend on $M'$.
\end{proposition}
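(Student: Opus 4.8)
The plan is to reduce the numerator $\Psi(s, f_{0}\vert V_{C_{f}}, g(\chi)\vert w_{C_{0}M'})$ occurring in \eqref{4.6} to an explicit scalar multiple of $\Psi(s,f,g^{\rho}(\overline{\chi}))$, and then to verify that, once this scalar is absorbed into the constant in front of \eqref{4.6}, one recovers exactly the constant of \eqref{4.8}. Since $\Psi(s,\cdot,\cdot) = \gamma(s)\,\mathcal{D}(s,\cdot,\cdot)$ with $\mathcal{D}(s,\cdot,\cdot) = L_{N}(2s+2-k-l,\cdot)\,L(s,\cdot,\cdot)$ linear in each argument once the weights and the nebentypus characters are fixed, the computation breaks into three pieces: unwinding the operator $w_{C_{0}M'}$, evaluating the Rankin--Selberg Dirichlet series $L(s,\cdot,\cdot)$ of the resulting forms, and a bookkeeping of constants. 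Throughout, $M'$ is a power of $p$ with $p^{2}C_{\chi}^{2}\mid M'$ and $(p,C_{f})=(p,C_{g})=1$.

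First I would unwind $g(\chi)\vert w_{C_{0}M'}$. Writing $C_{0}M' = A\cdot(C_{g}C_{\chi}^{2})$ with $A = pC_{f}M'/C_{\chi}^{2}$ (an integer, and a power of $p$ times $C_{f}$), the identity \eqref{4.17} gives $g(\chi)\vert w_{C_{0}M'} = A^{l/2}\,\bigl(g(\chi)\vert w_{C_{g}C_{\chi}^{2}}\bigr)\vert V_{A}$. As $g(\chi)$ is primitive of conductor $C_{g}C_{\chi}^{2}$, \eqref{rootnumber} yields $g(\chi)\vert w_{C_{g}C_{\chi}^{2}} = \Lambda(g(\chi))\,g(\chi)^{\rho}$, and comparing Fourier expansions shows $g(\chi)^{\rho} = g^{\rho}(\overline{\chi})$. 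Hence $g(\chi)\vert w_{C_{0}M'} = A^{l/2}\Lambda(g(\chi))\,g^{\rho}(\overline{\chi})\vert V_{A}$, with $\Lambda(g(\chi))$ given by \eqref{rootnumber2}. Pulling the scalar $A^{l/2}\Lambda(g(\chi))$ out by linearity of $\mathcal{D}$ leaves me to analyse $\Psi(s, f_{0}\vert V_{C_{f}}, g^{\rho}(\overline{\chi})\vert V_{A})$.

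Next I would compute $L(s, f_{0}\vert V_{C_{f}}, g^{\rho}(\overline{\chi})\vert V_{A}) = \sum_{m} c_{m}c_{m}'\,m^{-s}$. The $m$-th coefficient of $f_{0}\vert V_{C_{f}}$ is $a(m/C_{f},f_{0})$ and that of $g^{\rho}(\overline{\chi})\vert V_{A}$ is $\overline{\chi}(m/A)\overline{b(m/A)}$, so only $m = An$ contribute (note $C_{f}\mid A$), and there the first coefficient is $a\bigl((A/C_{f})n,f_{0}\bigr) = \alpha(A/C_{f})\,a(n,f_{0})$ by the $U_{p}$-eigenvector relation \eqref{(4.5)}. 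The factor $\overline{\chi}(n)$ forces $p\nmid n$, where $a(n,f_{0}) = a(n)$ by the $p$-stabilization \eqref{4.1}; thus the series equals $\alpha(A/C_{f})\,A^{-s}\,L(s,f,g^{\rho}(\overline{\chi}))$. Because $f_{0}\vert V_{C_{f}}$ and $f$ share the nebentypus $\psi$ while $g^{\rho}(\overline{\chi})\vert V_{A}$ and $g^{\rho}(\overline{\chi})$ share the nebentypus $\overline{\omega}\,\overline{\chi}^{2}$, the common factor $\gamma(s)\,L_{N}(2s+2-k-l,\psi\overline{\omega}\,\overline{\chi}^{2})$ is unchanged, giving $\Psi(s, f_{0}\vert V_{C_{f}}, g^{\rho}(\overline{\chi})\vert V_{A}) = \alpha(A/C_{f})\,A^{-s}\,\Psi(s,f,g^{\rho}(\overline{\chi}))$.

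Finally I would substitute $A = pC_{f}M'/C_{\chi}^{2}$, the value $\alpha(A/C_{f}) = \alpha(pM'/C_{\chi}^{2})$, and \eqref{rootnumber2} into \eqref{4.6} and simplify the constant. The root number $\Lambda(g)$ cancels, $\overline{\chi}(C_{g})\chi(C_{g}) = 1$ since $(p,C_{g})=1$, the powers $(pM')^{s-l/2}C_{f}^{s-l/2}A^{l/2-s}$ telescope to $C_{\chi}^{2s-l}$, and $\alpha(pM'/C_{\chi}^{2})/\alpha(pM') = \alpha(C_{\chi})^{-2}$ using $\alpha(p^{r}) = \alpha(p)^{r}$; together these reproduce exactly the factor $\omega(C_{\chi})G(\chi)^{2}C_{\chi}^{2s-l-1}/\alpha(C_{\chi})^{2}$ of \eqref{4.8}, so $\Psi_{s}^{(M')}(\chi) = \Psi_{s}(\chi)$, manifestly independent of $M'$. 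The step I expect to be the main obstacle is the claim in the third paragraph that the imprimitive Dirichlet factor $L_{N}$ is genuinely the same on both sides: this requires checking that the $p$-stabilization and the shift $V_{A}$ alter the forms only at $p$ (and at the omitted level primes), which is precisely where the twist by $\overline{\chi}$ and the Euler-factor conventions must be reconciled.
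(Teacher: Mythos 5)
Your proposal is correct, and although it uses the same toolkit as the paper --- the identity \eqref{4.17}, the root-number relations \eqref{rootnumber} and \eqref{rootnumber2}, the multiplicativity \eqref{(4.5)}, and the convention $\chi(p)=0$ --- the decomposition is genuinely different. The paper stages the reduction: a first application of \eqref{4.17}, with $A=M_{1}C_{f}$ where $M'=pC_{\chi}^{2}M_{1}$, brings $g(\chi)\vert w_{C_{0}M'}$ down only to $g(\chi)\vert w_{p^{2}C_{g}C_{\chi}^{2}}$, which already proves independence of $M'$ before any root-number input is needed; the precise identification with $\Psi_{s}(\chi)$ is then a separate statement, \eqref{4.12}, proved by a second application of \eqref{4.17} with $A=p^{2}$, the manipulation $L(s,f_{0},g^{\rho}(\overline{\chi})\vert V_{p^{2}})=p^{-2s}L(s,f_{0}\vert U_{p^{2}},g^{\rho}(\overline{\chi}))=\alpha(p^{2})p^{-2s}L(s,f_{0},g^{\rho}(\overline{\chi}))$, and a final comparison $\Psi(s,f_{0},g^{\rho}(\overline{\chi}))=\Psi(s,f,g^{\rho}(\overline{\chi}))$. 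You collapse all of this into one pass: a single application of \eqref{4.17} with $A=pC_{f}M'/C_{\chi}^{2}$ descends directly to the true conductor $C_{g}C_{\chi}^{2}$, and your one coefficient computation absorbs both the paper's $U_{p^{2}}$-step and its $f_{0}$-versus-$f$ comparison (each coming from $\overline{\chi}(n)=0$ for $p\mid n$ together with \eqref{(4.5)}); your constant bookkeeping then reproduces \eqref{4.8} exactly --- the telescoping to $C_{\chi}^{2s-l}$, the cancellation of $\Lambda(g)$ and of $\overline{\chi}(C_{g})\chi(C_{g})$, and the ratio $\alpha(pM'/C_{\chi}^{2})/\alpha(pM')=\alpha(C_{\chi})^{-2}$ all check out. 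Your route buys brevity and makes the final cancellation transparent; the paper's staging buys the $M'$-independence cheaply and isolates the genuinely arithmetic content in \eqref{4.12}. Finally, the step you flag as the likely obstacle --- equality of the imprimitive Dirichlet $L$-factors on the two sides --- is exactly the paper's \eqref{4.20}, and it closes just as you anticipate: for nontrivial $\chi$ the supports $S(pC)$ and $S(CC_{\chi}^{2})$ coincide because $C_{\chi}$ is a positive power of $p$, while for trivial $\chi$ the two subscripts differ only at $p$ and the discrepant Euler factor equals $1$ since $\overline{\chi}(p)=0$ under the convention you already invoke; so no new idea is needed there, only this two-line check.
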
	
\begin{proof}
      First we simplify the right side of \eqref{4.6}.  From  \eqref{1.2} and \eqref{1.4} it follows that
      \begin{equation} \label{(4.7)}
            \begin{split}
             \Psi(s,f_{0}\vert V_{C_{f}},g(\chi)\vert w_{C_{0}M'}) 
              & =  (2\pi)^{-2s}\Gamma(s)\Gamma(s-l+1)L_{pC}(2s+2-k-l,\psi \overline{\omega\chi^{2}}) \\ 
                  & ~~~~~ \times  L(s,f_{0}\vert V_{C_{f}},g(\chi)\vert w_{C_{0}M'}),
             \end{split}
     \end{equation}
     noting that $S(pC) = S(C_0M')$, for the joint level $C_0M'$ of the forms $f_{0}\vert V_{C_{f}}$ and 
     $g(\chi)\vert w_{C_{0}M'}$.
     We define $A(n)$ and $B(n)$ to be the coefficients in the Dirichlet series
     \begin{equation}
           \begin{split}
                 L(s,f_{0}) & =  \sum\limits_{n=1}^{\infty} A(n)n^{-s},  \\
                 L(s,g(\chi) \vert w_{p^{2}C_{g}C_{\chi}^{2}}) & =\sum\limits_{n=1}^{\infty} B(n)n^{-s}  .
	     \end{split}
     \end{equation}
     Then, by the multiplicative property \eqref{(4.5)}, we have
     \begin{equation}\label{(4.9)}
          A(Mn)=\alpha(M)A(n),  \mathrm{ \ for \ all \ } M \mathrm{ \ such \ that \ }  
          S(M)= \lbrace p \rbrace.
     \end{equation}
     Let $M_{1}$ be such that $M'=pC_{\chi}^{2}M_{1}$. Applying \eqref{4.17} with  
     $A=M_{1}C_{f}$ and  $B=p^{2} C_{g} C_{\chi}^{2}$, we get
    \begin{equation}\label{(4.10)}
          \begin{split}
               g(\chi)\vert w_{C_{0}M'} = g(\chi)\vert w_{M_{1}C_{f}p^{2} C_{g} C_{\chi}^{2}} 
               & =  (M_{1}C_{f})^{l/2}  g(\chi) \vert w_{p^{2} C_{g} C_{\chi}^{2}} 
                      V_{M_{1}C_{f}}\\
               & =  (M_{1}C_{f})^{l/2}\sum\limits_{n=1}^{\infty} B(n)e(M_{1}C_{f}nz). 
          \end{split}
    \end{equation}
     %
    %Taking \eqref{(4.9)} and \eqref{(4.10)} into account, 
     We transform the last $L$-function in 
     \eqref{(4.7)} as follows:
     \begingroup
     \allowdisplaybreaks
     \begin{eqnarray}\label{4.11}
           L(s,f_{0}\vert V_{C_{f}},g(\chi)\vert w_{C_{0}M'})  
           & \stackrel{\eqref{(4.10)}}{=}  & (M_{1} C_{f})^{l/2} \sum\limits_{n=1}^{\infty}A(nC_{f}^{-1}) B(nM_{1}^{-1}C_{f}^{-1})n^{-s}  \nonumber \\
           & =  & (M_{1} C_{f})^{l/2} \sum\limits_{n=1}^{\infty}A(nM_{1}) B(n)(nM_{1}C_{f})^{-s} \nonumber \\
           & \stackrel{\eqref{(4.9)}}{=} & (M_{1} C_{f})^{l/2-s} \alpha(M_{1})\sum\limits_{n=1}^{\infty}
               A(n) B(n)n^{-s} \nonumber \\
           & = & (M_{1} C_{f})^{l/2-s} \alpha(M_{1})L(s,f_{0},g(\chi) \vert w_{p^{2}C_{g}C_{\chi}^{2}}) \nonumber \\
           & = & \frac{\alpha(M')}{\alpha(pC_{\chi}^{2})} \cdot 
              \Big( \frac{M'C_{f}}{pC_{\chi}^{2}} \Big)^{l/2-s}  L(s,f_{0},g(\chi) \vert w_{p^{2}C_{g}C_{\chi}^{2}}). 
    \end{eqnarray}
    \endgroup
    If we substitute \eqref{4.11} in \eqref{4.6}, we see that  \eqref{4.6} does not depend on $M'$.  
    In order to obtain the more precise expression given by \eqref{4.8}, it is enough to establish the following equality:
    \begin{eqnarray}\label{4.12}
          \Psi (s,f_{0},g(\chi) \vert w_{p^{2}C_{g}C_{\chi}^{2}})  
          & = & \alpha(p)^{2} p^{l-2s} \Lambda(g(\chi))\Psi(s,f,g^{\rho}(\overline{\chi}))
    \end{eqnarray}
    where $\Lambda(g(\chi))$ is  the root number associated to $g(\chi)$, i.e., 
    $g(\chi) \vert w_{C_{g}C_{\chi}^{2}} = \Lambda(g(\chi))g^{\rho} (\overline{\chi})$,
    since by \eqref{rootnumber2} we have 
     $\Lambda(g(\chi))=\omega(C_{\chi})\chi(C_{g})G(\chi)^{2} C_{\chi}^{-1} \Lambda(g)$.
    %\begin{equation}\label{4.13}
          %\begin{split}
             %g(\chi)\vert w_{C_{g}C_{\chi}^{2}}&= \Lambda(g(\chi))g^{\rho} (\overline{\chi}), \\
             %\Lambda(g(\chi))&=\omega(C_{\chi})\chi(C_{g})G(\chi)^{2} C_{\chi}^{-1} \Lambda(g),
       % \end{split}
   %\end{equation}
    % is the constant in the functional equation of $g(\chi) \in S_{l}(C_{g}C_{\chi}^{2},\omega \chi^{2})$.
    
    To derive \eqref{4.12}  we find an appropriate expression  for $g(\chi)\vert w_{p^{2}C_{g}C_{\chi}^{2}}$. 
    Applying \eqref{4.17} once more with $A = p^{2}$ and $B = C_{g}C_{\chi}^{2}$, we get
    \begin{eqnarray*}\label{4.18}
         g(\chi)\vert w_{p^{2}C_{g}C_{\chi}^{2}} 
         & = & p^{l}  g(\chi) \vert w_{C_{g}C_{\chi}^{2}} V_{p^{2}} =  p^{l} \Lambda(g(\chi)) 
                    g^{\rho}(\overline{\chi})\vert V_{p^{2}}, 
    \end{eqnarray*}
    so that
    \begin{eqnarray*}
	    L(s,f_{0},g(\chi)\vert w_{p^{2}C_{g} C_{\chi}^{2}}) 
	    =  p^{l} \Lambda(g(\chi)) L(s,f_{0},g^{\rho}(\overline{\chi}) \vert V_{p^{2}}).
     \end{eqnarray*}
     A computation similar to  that of \eqref{4.11} shows that
    \begin{eqnarray*}
	      %\begin{split}
	      L(s,f_{0},g^{\rho}(\overline{\chi})\vert V_{p^{2}}) 
	           & = & p^{-2s} L(s,f_{0}\vert U_{p^{2}}, g^{\rho}(\overline{\chi})) \\
	           & = & \alpha(p^{2}) p^{-2s} L(s,f_{0},g^{\rho}(\overline{\chi})),
	      %\end{split}
    \end{eqnarray*}
     where we used $f_{0} \vert U_{p} = \alpha(p) f_{0}$ in the last step. Therefore 
     \begin{eqnarray*} 
      \Psi (s,f_{0},g(\chi)
        \vert w_{p^{2}C_{g}C_{\chi}^{2}}) & = & \alpha(p)^{2} \Lambda(g(\chi)) \allowbreak  p^{l-2s} 
        \Psi(s,f_{0},g^{\rho}(\overline{\chi})). 
     \end{eqnarray*}
     Substituting this in \eqref{4.12} we are 
       reduced to proving
      \begin{eqnarray*}
	        %\begin{split}
	                  \Psi(s,f_{0},g^{\rho}  (\overline{\chi})) & = & \Psi(s,f,g^{\rho}  (\overline{\chi})).
	                  %& \times \sum\limits_{d_{1} \vert d \vert p} \mu(d) \mu(d_{1}) \omega(d_{1})  
	                  %d_{1}^{l-1} \chi(dd_{1}) \alpha(dd_{1})^{-1} (dd_{1})^{s-l} b(dd_{1}^{-1})
	   %\end{split}
     \end{eqnarray*}
      From \eqref{4.1}, it follows that 
      \begin{align}\label{4.19}
	         L(s,f_{0},g^{\rho}  (\overline{\chi})) 
	         & =   L(s,f,g^{\rho}  (\overline{\chi})) - \alpha'(p) L(s,f\vert V_{p},g^{\rho}  
	                  (\overline{\chi})) \nonumber \\
	         & = L(s,f,g^{\rho}  (\overline{\chi}))  \ \ \ \  (\because   \chi(p)=0).
    \end{align}
     Further, for every character $\chi: \mathbb{Z}_{p}^{\times} \rightarrow \mathbb{C}^{\times}$, 
     we  have $S(pC_fC_gC_\chi^2) = S(C C_{\chi}^2)$ (except if $\chi$ is the trivial character) so that 
     \begin{eqnarray}\label{4.20}
         L_{pC_fC_gC_\chi^2}(2s+2-k-l,\psi\overline{\omega\chi^{2}}) 
            & = &  L_{CC_{\chi}^{2}}(2s+2-k-l,\psi\overline{\omega\chi^{2}})
     \end{eqnarray}
     in all cases (since if $\chi$ is the trivial character, $\chi(p) = 0$). 
     %\begin{align}\label{4.20}
     %       L_{CC_{\chi}^{2}}(2s+2-k-l,\psi\overline{\omega\chi^{2}})
     %       &=  L_{pCC_{\chi}^{2}}(2s+2-k-l,\psi\overline{\omega\chi^{2}})
     %               ~ (\because \chi(p)=0) \nonumber \\
     %       & = L_{pC}(2s+2-k-l,\psi\overline{\omega\chi^{2}})
     %\end{align}
    From \eqref{4.19} and \eqref{4.20}, it follows that $\Psi(s,f_{0},g^{\rho}  (\overline{\chi}))
     = \Psi(s,f,g^{\rho}  (\overline{\chi}))$. Thus we obtain \eqref{4.12}. 
\end{proof}
We conclude this section by making an observation on the algebraicity of $\Psi_{s}^{(M')}$, 
which will be used in later sections.
\begin{corollary}\label{Psi is algebraic}
	Let $\chi: \mathbb{Z}_{p}^{\times} \rightarrow \mathbb{C}^{\times}$ be a finite order character 
	and $M'$ as in Proposition \ref{Propn 4.1}. Then for every integer $s$ with $l \leq s \leq k-1$, we 
	have $\Psi_{s}^{(M')}(\chi) \in \Qbar$.  
\end{corollary}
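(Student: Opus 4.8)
The plan is to reduce the claim to Shimura's algebraicity theorem (Theorem~\ref{Algebraicity}) by first replacing the $M'$-dependent expression \eqref{4.6} with the cleaner formula \eqref{4.8}. Indeed, by Proposition~\ref{Propn 4.1} we have $\Psi_{s}^{(M')}(\chi) = \Psi_{s}(\chi)$ for every $M'$ satisfying \eqref{M'}, so it suffices to prove $\Psi_{s}(\chi) \in \Qbar$. Recalling from \eqref{4.8} that
\[
\Psi_{s}(\chi) = \frac{\omega(C_{\chi}) G(\chi)^{2} C_{\chi}^{2s-l-1}}{\alpha(C_{\chi})^{2}} \cdot \frac{\Psi(s,f,g^{\rho}(\overline{\chi}))}{\pi^{1-l}\langle f , f \rangle_{C_{f}}},
\]
I would check separately that each of the two factors on the right lies in $\Qbar$.

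First I would treat the period ratio. Since $g$ is a primitive cusp form, so is its conjugate $g^{\rho} \in S_{l}(N,\overline{\omega})$, of the same conductor $C_{g}$ and weight $l$, and $\overline{\chi}$ is again a Dirichlet character. Applying Theorem~\ref{Algebraicity} with $g$ replaced by $g^{\rho}$ and $\chi$ replaced by $\overline{\chi}$ then yields
\[
\frac{\Psi(s,f,g^{\rho}(\overline{\chi}))}{\pi^{1-l}\langle f , f \rangle_{C_{f}}} \in \Qbar
\]
for every integer $s$ with $l \leq s \leq k-1$. This is precisely the range hypothesized in the corollary, and it is the only point at which that constraint enters: these are the critical values at which Shimura's result is available.

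It remains to treat the prefactor, and here I would observe that, since $\chi$ is a finite order character of $\mathbb{Z}_{p}^{\times}$, its conductor $C_{\chi}$ is a power of $p$, say $C_{\chi}=p^{r}$. Each ingredient is then visibly algebraic: $\omega(C_{\chi})$ is a value of a Dirichlet character, hence a root of unity; the Gauss sum $G(\chi)$ lies in a cyclotomic field, so $G(\chi)^{2} \in \Qbar$; the exponent $2s-l-1$ is an integer, so $C_{\chi}^{2s-l-1} \in \mathbb{Q}$; and $\alpha(C_{\chi}) = \alpha(p)^{r}$ is a power of the root $\alpha(p)$ of the Hecke polynomial $X^{2}-a(p)X+\psi(p)p^{k-1}$ from \eqref{alpha,beta}, hence algebraic, and nonzero since it is a $p$-adic unit by \eqref{1.9}, so $\alpha(C_{\chi})^{-2} \in \Qbar$. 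As $\Qbar$ is a field, the product of these ingredients lies in $\Qbar$, and multiplying by the period ratio gives $\Psi_{s}^{(M')}(\chi)=\Psi_{s}(\chi) \in \Qbar$, as required. I do not expect any genuine obstacle here: the only substantive step is verifying that the substitution of $g^{\rho}$ for $g$ and $\overline{\chi}$ for $\chi$ meets the hypotheses of Theorem~\ref{Algebraicity} verbatim, which it does because $g^{\rho}$ is again primitive of weight $l$ and $\overline{\chi}$ is again a Dirichlet character; everything else is the routine observation that roots of unity, Gauss sums, integer powers of integers, and nonzero algebraic numbers are all algebraic.
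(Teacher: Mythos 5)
Your proof is correct and follows essentially the same route as the paper: invoke Proposition~\ref{Propn 4.1} to identify $\Psi_{s}^{(M')}(\chi)$ with $\Psi_{s}(\chi)$, then apply Theorem~\ref{Algebraicity} (with $g^{\rho}$ and $\overline{\chi}$ in place of $g$ and $\chi$) to the formula \eqref{4.8}. The only difference is that you spell out the algebraicity of the elementary prefactor $\omega(C_{\chi})G(\chi)^{2}C_{\chi}^{2s-l-1}\alpha(C_{\chi})^{-2}$, which the paper leaves implicit.
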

\begin{proof}
       From  Theorem \ref{Algebraicity}, we have $\Psi_{s}(\chi)$ is algebraic for every integer $s$ 
       with $l \leq s  \leq k-1$. Hence, by the previous  proposition, we have $\Psi_{s}^{(M')}$ is also   
       algebraic for every integer $s$ in the interval $[l ,k-1]$.
 \end{proof}
Dirichlet characters actually take values in $\overline{\Q} \subset \C$. Via our fixed
embedding $i_p \> : \> \overline{\Q} \hookrightarrow \mathbb{C}_p$, we may think of them as $\C_p$-valued. 
Moreover, by the corollary above we may similarly think of  $\Psi_{s}^{(M')}$ as $\C_p$-valued for 
$s \in [l,k-1]$. Thus, for such $s$, all the measures in this section can (and later will be) thought of as 
$p$-adic entities. 
 
\section{Integral representation for Distributions}

In this section we obtain an integral expression for the distribution $\Psi^{(M')}_{s}$ given by \eqref{4.6}
involving the Petersson inner product of certain cusp forms. We also compute
the Fourier expansion of one of these cusp forms. This will be needed
in the last section in order to explicitly verify the Kummer congruences.  

Recall the following classical integral formula of Rankin (cf. \eqref{2.12}). 
For $F \in S_{k}(N,\psi)$ and $G \in M_{l}(N,\omega) $, we have 
\begin{equation}\label{5.1}
	  \Psi(s,F,G)= 2^{-1}\Gamma(s-l+1)\pi^{-s} \langle F^{\rho},GE(s-k+1)\rangle_{N}, 
\end{equation}
where 
\begin{align*}
	F^{\rho}(z) &  = \overline{F(- \overline{z})} \ \in S_{k}(N,\overline{\psi} ), \\
	E(z,s)         &  =  E_{k-l,N}(z,s,\psi\omega) = y^{s} \sideset{}{'} \sum\limits_{c,d = - \infty} ^ {\infty}  
	                            \psi\omega(d) (cNz+d)^{-(k-l)} \vert cNz+d \vert ^{-2s}.
\end{align*}
Let $\chi: \mathbb{Z}_{p}^{\times} \rightarrow \mathbb{C}_{p}^{\times}$  be a finite order character.
 Let $M'$ be as in \eqref{M'}, i.e., $p^{2}C_{\chi}^{2}  \mid  M'$ and $S(M')=\lbrace p \rbrace$. 
 We apply \eqref{5.1} with
\begin{align*}
       N &=  C_{0}C_{f}M' , \\
    	   F &= f_{0} \vert V_{C_{f}} \in S_{k}(pC_{f}^{2},\psi) \subset S_{k}(C_{0}C_{f}M',\psi),   \\
	    G  & = g(\chi) \vert  w_{C_{0}M'}  \in S_{l}(C_{0}M',\overline{\omega\chi^{2}}) 
	                \subset S_{l}(C_{0}C_{f}M',\overline{\omega\chi^{2}}).	
\end{align*}
For  every integer $s$ such that $l \leq s \leq k-1$, we transform the definition of the distribution 
\eqref{4.6} by means of the equality
 \begin{eqnarray*}
	     \Psi(s,f_{0}\vert V_{C_{f}},g(\chi)\vert w_{C_{0}M'}) 
	     & = & 2^{-1}\Gamma(s-l+1)\pi^{-s} \langle f_{0}^{\rho}\vert V_{C_{f}},GE(s-k+1)\rangle_{C_{0}C_{f}M'},
\end{eqnarray*}
where $E(z,s-k+1) = E_{k-l,C_{0}C_{f}M'}(z,s-k+1,\psi\overline{\omega\chi^{2}}).$ 
If we set
\begin{align*}
    	K(s) =G \cdot E(z,s),
\end{align*}
then the formula for the values of the distribution \eqref{4.6}  takes the form
\begin{equation}
      \begin{split}
	       \Psi_{s}^{(M')}(\chi) = (pM')^{s-l/2}& C_{f}^{s-l/2} \overline{\chi}(C_{g}) \Lambda(g)^{-1} 
	                                               \alpha(pM')^{-1} \\ &
            \times  2^{-1}  \Gamma(s-l+1)  \pi^{-s} \frac{\langle f_{0}^{\rho} \vert V_{C_{f}} , K(s-k+1) 
              \rangle_{C_{0}C_{f}M'}}{\pi^{1-l} \langle f,f \rangle_{C_{f}}}.
	\end{split}
\end{equation}
By Lemma~\ref{lemma2.2} (with $N=C_{0}C_{f}$, $M=M'$, $f=f_{0}^{\rho}\vert V_{C_{f}}$ and 
$g=K(s)$), we obtain 
\begin{eqnarray*}
     	\langle  f_{0}^{\rho} \vert V_{C_{f}}, K(s) \rangle_{C_{0}C_{f}M'} 
        & =  & \langle f_{0}^{\rho}\vert V_{C_{f}}, Tr_{C_{0}C_{f}}^{C_{0}C_{f}M'}(K(s))\rangle_{C_{0}C_{f}} \\
      	& \stackrel{\eqref{2.10}}{=}  & (-1)^{k} M'^{1-k/2} \langle f_{0}^{\rho} \vert V_{C_{f}}, K'(s) \vert U_{M'} 
      	   w_{C_{0}C_{f}} \rangle_{C_{0}C_{f}},
\end{eqnarray*}
where $K'(s) = K(s) \vert w_{C_{0}C_{f}M'}$. Hence,
\begin{equation}\label{5.5}
       \begin{split}
               \Psi_{s}^{(M')}(\chi) = (-1)^{k} & p^{s-l/2}  M'^{(2s-l-k+2)/2} C_{f}^{s-l/2}
               \overline{\chi} (C_{g})  \Lambda(g)^{-1} \alpha(pM')^{-1} \\ &
                \times	2^{-1}  \Gamma(s-l+1)  \pi^{-s} \frac{\langle f_{0}^{\rho}\vert V_{C_{f}} , 
                K'(s-k+1)\vert U_{M'}w_{C_{0}C_{f}} \rangle_{C_{0}C_{f}}} {\pi^{1-l} \langle f,f \rangle_{C_{f}}}.
        \end{split}
\end{equation} 
Now we compute the Fourier coefficients of $K'(s)$ for special values of $s$ (more precisely, for 
$l-k+1 \leq s \leq 0$, $s \in \mathbb{Z}$). We rewrite $K'(s)$ as 
\begin{align*}
      K'(s)= g' \cdot E'(z,s),
\end{align*}
where
\begin{align*}\label{5.6}
	g'=g(\chi)\vert w_{C_{0}M'} w_{C_{0}C_{f}M'} \ \ \mathrm{and} \ \ E'(z,s)=E(z,s) \vert w_{C_{0}C_{f}M'}.
\end{align*} 
It follows from the definition of $w_{C_{0}M'}$, $w_{C_{0}C_{f}M'}$ that 
\begin{equation}\label{5.7}
	    g'=(-1)^{l}C_{f} ^{l/2}g(\chi) \vert V_{C_{f}}. 
\end{equation}
The Fourier expansion of the Eisenstein series   $E'(z,s)$ will be computed in the next section, 
from which we will obtain the Fourier expansion of $K'(s)$.
\subsection{Fourier expansion of Eisenstein series}
Here we follow \cite[$ \S 7.2$]{Miyake} to compute the Fourier expansion of $E'(z,s)$. 
The procedure given in \cite{Miyake} describes the Fourier expansion of more general Eisenstein series. 
Let $\mathcal{H}' = \lbrace z \in \mathbb{C} \mid \mathrm{Re}(z)>0 \rbrace$ denote the right half plane. 
%First we define Whittaker functions W(z;\alpha,\beta)$. 
For $\alpha \in \mathbb{C}$ and 
$\beta, z \in \mathcal{H}'$, the Whittaker function  $W(z;\alpha,\beta)$ is defined by the following integral:
\begin{equation}
	   W(z;\alpha,\beta)=\Gamma(\beta)^{-1} \int\limits_{0}^{\infty} (u+1)^{\alpha-1} u^{\beta-1}e^{-zu} \ du.
\end{equation}
The convergence of the above integral follows from \cite[Lemma 7.2.1 (2)]{Miyake}.
\begin{lemma}\label{(5.1)}
	The function $W(z;\alpha,\beta)$ can be continued analytically to a holomorphic function on 
	 $\mathcal{H}' \times \mathbb{C} \times \mathbb{C}$  satisfying:
	 \begin{enumerate}[label={\emph{(\arabic*)}}]
	        \item  $W(z;\alpha,\beta)= z^{1-\alpha-\beta}W(z;1-\beta,1-\alpha), \ \forall \ (z,\alpha,\beta) 
	                   \in  \mathcal{H}' \times \mathbb{C} \times \mathbb{C}$.
	        \item   $W(z;\alpha,0) = 1, \ \forall \ (z,\alpha) \in \mathcal{H}' \times \mathbb{C}$.
          	\item  %For every non-negative integer $r$, 
                       $W(y;\alpha,\beta) = \sum\limits_{i=0}^{r}(-1)^{i} 
        	               \binom{r}{i} y^{r-i} \frac{\Gamma(\alpha)} {\Gamma(\alpha-i)}W(y;\alpha - i,\beta + r), 
        	                %\allowbreak 
                       \> \forall \ r \geq 0$,  $y \in \mathbb{R}^{+}$,  $(\alpha,\beta) \in \mathbb{C}\times \mathbb{C}$. 
	\end{enumerate}
\end{lemma}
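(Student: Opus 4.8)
The plan is to derive all three assertions, together with the analytic continuation, from two manipulations of the defining integral: an integration by parts that produces a contiguity relation lowering $\beta$, and a Fubini-type rewriting that produces a second integral representation in which the roles of $\alpha$ and $\beta$ are interchanged.

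First I would note that, by \cite[Lemma 7.2.1 (2)]{Miyake}, the defining integral converges locally uniformly on $\mathcal{H}' \times \mathbb{C} \times \{\mathrm{Re}(\beta) > 0\}$, so differentiation under the integral sign shows $W$ is holomorphic there; the factor $\Gamma(\beta)^{-1}$ is entire. Integrating by parts, writing $u^{\beta-1}\,du = d(u^{\beta}/\beta)$ and differentiating $(u+1)^{\alpha-1}e^{-zu}$, the boundary terms vanish for $\mathrm{Re}(\beta)>0$ and one obtains the contiguity relation
\[
W(z;\alpha,\beta) = z\,W(z;\alpha,\beta+1) - (\alpha-1)\,W(z;\alpha-1,\beta+1).
\]
Its right-hand side is holomorphic for $\mathrm{Re}(\beta) > -1$ and every $\alpha$, so this identity furnishes the continuation of $W$ one unit to the left in $\beta$; iterating extends $W$ holomorphically to all of $\mathcal{H}' \times \mathbb{C} \times \mathbb{C}$, and by the identity theorem the relation then holds on the whole domain. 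Property \emph{(3)} is exactly this relation when $r=1$, since $\Gamma(\alpha)/\Gamma(\alpha-1)=\alpha-1$. I would deduce the general $r$ by induction: apply the relation to each $W(y;\alpha-i,\beta+r)$ on the right, use $(\alpha-i-1)\,\Gamma(\alpha)/\Gamma(\alpha-i)=\Gamma(\alpha)/\Gamma(\alpha-i-1)$, reindex, and collapse the coefficients via Pascal's rule $\binom{r}{i}+\binom{r}{i-1}=\binom{r+1}{i}$. This is routine bookkeeping once the $r=1$ case is available.

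For the remaining two parts I would first derive, for $\mathrm{Re}(\alpha)<1$, a second representation: inserting $(u+1)^{\alpha-1}=\Gamma(1-\alpha)^{-1}\int_0^\infty t^{-\alpha}e^{-(u+1)t}\,dt$ into the defining integral and interchanging the order of integration (justified by absolute convergence in this range), the inner $u$-integral evaluates to a Gamma factor and yields
\[
W(z;\alpha,\beta) = \frac{1}{\Gamma(1-\alpha)}\int_0^\infty t^{-\alpha}(t+z)^{-\beta}e^{-t}\,dt.
\]
Setting $\beta=0$ collapses this to $\Gamma(1-\alpha)^{-1}\Gamma(1-\alpha)=1$, giving \emph{(2)} for $\mathrm{Re}(\alpha)<1$ and hence, since $W$ is holomorphic in $\alpha$, for all $\alpha$. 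For \emph{(1)}, in this representation I would rotate the contour from the positive real axis to the ray $\{zs : s>0\}$, i.e. substitute $t=zs$; pulling out the factors $z^{-\alpha}\cdot z^{-\beta}\cdot z$ produces precisely $z^{1-\alpha-\beta}W(z;1-\beta,1-\alpha)$, establishing \emph{(1)} for $\mathrm{Re}(\alpha)<1$, $\mathrm{Re}(\beta)>0$, whence everywhere by analytic continuation in $(\alpha,\beta)$.

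The main obstacle is the justification of these two analytic maneuvers, and above all the contour rotation $t=zs$. I would handle the Fubini interchange by restricting to $\mathrm{Re}(\alpha)<1$, where the double integral is absolutely convergent, and the rotation by Cauchy's theorem on the sector between the two rays: since $\mathrm{Re}(z)>0$ we have $|\arg z|<\pi/2$, the pole at $t=-z$ lies outside the sector, on the large arc of radius $R$ one has $|e^{-t}|\le e^{-R\cos(\arg z)}$ with $\cos(\arg z)>0$ so that arc contributes nothing, and the small arc near $0$ vanishes exactly because $\mathrm{Re}(\alpha)<1$. Everything else — the holomorphy, the contiguity relation, and the binomial induction for \emph{(3)} — is formal once these convergence and contour estimates are granted.
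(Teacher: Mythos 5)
Your proof is correct, and it takes a genuinely different route from the paper's. The paper does not argue directly at all: it observes that Miyake's function $\omega(z;\alpha,\beta)$ of \cite[(7.2.31)]{Miyake} equals $z^{\beta}W(z;\alpha,\beta)$ and then simply quotes the analytic continuation, the functional equation and the recursion from \cite[Theorem 7.2.4 (1), Lemma 7.2.6, (7.2.40)]{Miyake}. You instead derive everything from the defining integral: the integration-by-parts contiguity relation $W(z;\alpha,\beta)=z\,W(z;\alpha,\beta+1)-(\alpha-1)\,W(z;\alpha-1,\beta+1)$ furnishes the continuation in $\beta$ strip by strip and, by induction with Pascal's rule, part (3) (whose $r=1$ case it is); the auxiliary representation $W(z;\alpha,\beta)=\Gamma(1-\alpha)^{-1}\int_0^\infty t^{-\alpha}(t+z)^{-\beta}e^{-t}\,dt$, obtained by inserting $(u+1)^{\alpha-1}=\Gamma(1-\alpha)^{-1}\int_0^\infty t^{-\alpha}e^{-(u+1)t}\,dt$ and applying Fubini for $\mathrm{Re}(\alpha)<1$, gives part (2) at once; and rotating its contour onto the ray through $z$ gives part (1). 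I checked the details and they hold up: the boundary terms in the integration by parts vanish for $\mathrm{Re}(\beta)>0$ and $\mathrm{Re}(z)>0$; the double integral is absolutely convergent in the stated range; on the closed sector between $\mathbb{R}^{+}$ and the ray $\lbrace zs \mid s>0\rbrace$ one has $\mathrm{Re}(t+z)>0$, so the pole at $t=-z$ is avoided, the large and small arcs are killed by $\lvert e^{-t}\rvert \leq e^{-\lvert t \rvert \cos(\arg z)}$ and by $\mathrm{Re}(\alpha)<1$ respectively, and the principal branches factor correctly, $(zs)^{-\alpha}=z^{-\alpha}s^{-\alpha}$ and $(z(s+1))^{-\beta}=z^{-\beta}(s+1)^{-\beta}$, since $\lvert \arg z\rvert<\pi/2$; finally the identity theorem propagates each identity to all of $\mathcal{H}'\times\mathbb{C}\times\mathbb{C}$. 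What the paper's route buys is brevity and consistency with its general practice of outsourcing Eisenstein-series analysis to \cite{Miyake} (Theorem 5.2, quoted immediately afterwards, comes from the same section of that book); what yours buys is a self-contained and elementary proof that spares the reader from unwinding Miyake's normalization $\omega(z;\alpha,\beta)=z^{\beta}W(z;\alpha,\beta)$ --- in effect you have reconstructed, for this special case, the computations underlying the cited results.
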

\begin{proof}
	Note that $\omega(z;\alpha,\beta)$ defined by \cite[(7.2.31)]{Miyake} equals to   
	$ z^{\beta}W(z;\alpha,\beta)$ for all $(z,\alpha,\beta) \in \mathcal{H}' \times \mathbb{C} \times 
	\mathcal{H}'$. The lemma now follows from \cite[Theorem 7.2.4 (1)]{Miyake}, 
	\cite[Lemma 7.2.6]{Miyake} and \cite[(7.2.40)]{Miyake}.
\end{proof}
By part (3) of Lemma~\ref*{(5.1)}, with $\beta =-r$, and by part (2), we obtain for all $y>0$ that 
\begin{equation}\label{5.8}
	\begin{split}
	       W(y;\alpha,-r) 
	       &  =  \sum\limits_{i=0}^{r} (-1)^{i} \binom{r}{i} \frac{\Gamma(\alpha)}{\Gamma(\alpha-i)} 
	                 y^{r-i} W(y;\alpha-i,0) \\
	       & =  \sum\limits_{i=0}^{r} (-1)^{i} \binom{r}{i} \frac{\Gamma(\alpha)}{\Gamma(\alpha-i)} y^{r-i}.
	\end{split}
\end{equation}
Recall that the Eisenstein series $E_{k}(z,s;\theta,\varphi)$ for $\theta$ and $\varphi$ Dirichlet 
characters mod $L$ and $M$ respectively is defined by (cf. \eqref{Eisenstein series})
\begin{equation*}
	E_{k}(z,s;\theta,\varphi) = y^{s} \sideset{}{'} \sum\limits_{c,d = - \infty} ^ {\infty} 
	\theta(c) \varphi(d) (cz+d)^{-k} \vert cz+d \vert ^{-2s} .
\end{equation*}
We now state a result  about the Fourier expansion of  Eisenstein series.
\begin{theorem}\label{thm5.2}
	Let $\theta$ and $\varphi$ be Dirichlet characters mod $L$ and mod $M$, respectively, satisfying
	$\theta(-1)\varphi(-1) = (-1)^{k}$. Then for any integer $k$, the Eisenstein series $E_{k}(z,s;\theta,\varphi)$ 
	can be analytically continued to a meromorphic function on the whole $s$-plane and has the Fourier expansion 
	\begin{align*}
		 E_{k}(z,s;\theta,\varphi)= \ &  C(s) y^{s}+D(s)y^{1-k-s} +
		  A(s)y^{s}\sum\limits_{n=1}^{\infty} a_{n}(s)(4\pi /M)^{s}e( n z/M) W(4\pi y n /M;k+s,s) \\
		 &+B(s)y^{s}\sum\limits_{n=1}^{\infty} a_{n}(s)(4 \pi / M)^{s+k}e(- n \bar{z}/M) W(4\pi y n /M;s,k+s),
	\end{align*}
	where 
	\begin{eqnarray*}
		C(s) & = & \begin{cases}  
                               2L_{M}(2s+k,\varphi), & \hspace{3mm} \mathrm{if} \  \theta = \chi_{0},\\
		                                  0, & \hspace*{3 mm} \mathrm{otherwise},  
                           \end{cases} \\
	%\end{align*}
	%
	% \begin{align*}
	 	  D(s) & =  &
                  \begin{cases}
		  2  \sqrt{\pi}i^{-k}\prod\limits_{p \mid M} (1-p^{-1}) \Gamma(s)^{-1} \Gamma(s+k)^{-1} & \\
		   \quad \times \> \Gamma \left( \frac{2s+k-1}{2} \right) \Gamma \Big (\frac{2s+k}{2} \Big ) L_{L}(2s+k-1,\theta), 
		   & \mathrm{if} \  \varphi   \ \mathrm{is \ the \ trivial \ character \ mod} \ M, \\
	 	    0, & \mathrm{otherwise}, 
	 	 \end{cases} \\
	 %\end{align*}
	 %
	 %\begin{align*}
		  A(s) &= & 2^{k+1}i^{-k}G(\varphi^{0})(\pi \text{/} M)^{s+k} \Gamma(s+k)^{-1}, \hspace{2.2 cm} \\
	 	  B(s) &= & 2^{1-k}i^{-k}\varphi(-1)G(\varphi^{0})(\pi/M)^{s} \Gamma(s)^{-1},\hspace{1.3 cm} \\
	  	  a_{n}(s) &= & \sum\limits_{0 < c \vert n} \theta(n/c) c^{k+2s-1} \sum \limits_{0 < d \mid (l,c)} d  \mu(l/d) 
	      \varphi^{0}(l/d)  \overline{\varphi^{0}}(c/d).
	  \end{eqnarray*}
	  Here $\varphi^{0}$ denotes the primitive character associated with $\varphi$ of conductor 
	  $m_{\varphi}= M/l$ and $\mu$ is the M$\ddot{o}$bius function.		
\end{theorem}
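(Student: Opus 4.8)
The plan is to follow the classical unfolding strategy of \cite[\S 7.2]{Miyake}, computing the Fourier expansion in the half-plane $\mathrm{Re}(2s+k)>2$ where the defining series \eqref{Eisenstein series} converges absolutely, and then letting the Whittaker function supply the meromorphic continuation in $s$. First I would split the sum over $(c,d)\neq(0,0)$ into the part with $c=0$ and the part with $c\neq 0$. The $c=0$ part contributes only when $\theta=\chi_{0}$ (since $\theta(0)=0$ otherwise, in the convention of the \emph{Notation}), and there summing $\varphi(d)d^{-k}\lvert d\rvert^{-2s}$ over $d\neq 0$ — pairing $d$ with $-d$ and using the parity hypothesis $\theta(-1)\varphi(-1)=(-1)^{k}$ (which forces $\varphi(-1)=(-1)^{k}$ once $\theta=\chi_{0}$) — collapses to $2y^{s}L_{M}(2s+k,\varphi)$, giving the term $C(s)y^{s}$. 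For the $c\neq 0$ part the same parity hypothesis lets me fold $c$ and $-c$ together and restrict to $c>0$ at the cost of a factor $2$.

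For each fixed $c>0$ I would compute the Fourier expansion of the inner sum $\sum_{d\in\mathbb{Z}}\varphi(d)(cz+d)^{-k}\lvert cz+d\rvert^{-2s}$, which is periodic in $x=\mathrm{Re}(z)$. Decomposing $d$ into residue classes $d\equiv a \pmod M$ and applying Poisson summation to each arithmetic progression reduces everything to the one-variable transform
\begin{equation*}
\int_{-\infty}^{\infty}(u+iv)^{-k}\,\lvert u+iv\rvert^{-2s}\,e(-\xi u)\,du,\qquad v=cy>0.
\end{equation*}
This is the crucial step and the main obstacle: evaluating this integral is precisely what produces the Whittaker function, and I would invoke Lemma~\ref{(5.1)} to express it, separately for $\xi>0$, $\xi<0$ and $\xi=0$, in terms of $W(4\pi yn/M;k+s,s)$, $W(4\pi yn/M;s,k+s)$, and the $\Gamma$-factors entering $A(s)$, $B(s)$, $D(s)$. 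The three sign regimes are exactly what split the final answer into the $e(nz/M)$-series, the $e(-n\bar z/M)$-series, and the constant term $D(s)y^{1-k-s}$ respectively, with the holomorphic exponential $e(nz/M)$ emerging from combining $e(2\pi i nx/M)$ with the exponential decay $e^{-2\pi ny/M}$ built into $W$.

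Finally I would reassemble the pieces. The exponential sums $\sum_{a \bmod M}\varphi(a)e(ma/M)$ arising from Poisson summation are (generally non-primitive) Gauss sums; rewriting $\varphi$ through its associated primitive character $\varphi^{0}$ of conductor $m_{\varphi}=M/l$ produces the factor $G(\varphi^{0})$ together with the M\"obius and divisor expressions, and combining these with the outer Dirichlet series $\sum_{c>0}\theta(c)c^{k+2s-1}(\cdots)$ assembles the arithmetic coefficient $a_{n}(s)$ exactly as stated. Beyond this bookkeeping the only genuine analytic input is the convergence and continuation of the Whittaker integral, which is guaranteed by Lemma~\ref{(5.1)}; the remaining effort is the careful matching of the constants $A(s)$, $B(s)$, $C(s)$, $D(s)$, which I would verify by tracking the normalizations $(4\pi/M)^{\bullet}$, the factor $i^{-k}$ from the Fourier transform, and the $\Gamma$-quotients coming from parts (2) and (3) of Lemma~\ref{(5.1)}.
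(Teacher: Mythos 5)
Your outline is correct, but it takes a genuinely different route from the paper: the paper's entire proof of Theorem~\ref{thm5.2} is a citation of \cite[Theorem 7.2.9]{Miyake}, together with the bookkeeping that the Eisenstein series here differs from the one in \cite[(7.2.1)]{Miyake} by a factor of $y^{s}$ and that Miyake's $\omega(y;\alpha,\beta)$ equals $y^{\beta}W(y;\alpha,\beta)$. You, by contrast, are reproving Miyake's theorem from scratch — splitting off $c=0$, folding $\pm(c,d)$ via the parity hypothesis, Poisson summation over residue classes mod $M$, and reassembling the exponential sums through $\varphi^{0}$ and M\"obius inversion — which is self-contained but long; the paper's citation buys brevity at the cost of normalization translation, which is precisely the kind of step where sign slips occur (as the paper itself emphasizes elsewhere). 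One caveat on your sketch: the step you rightly identify as crucial, the evaluation of $\int_{-\infty}^{\infty}(u+iv)^{-k}\lvert u+iv\rvert^{-2s}e(-\xi u)\,du$ in terms of $W$, is \emph{not} supplied by Lemma~\ref{(5.1)}. That lemma only records the analytic continuation of $W$, the normalization $W(z;\alpha,0)=1$, and a recursion; the identification of this Fourier transform with the confluent hypergeometric function $W$, separately in the regimes $\xi>0$, $\xi<0$, $\xi=0$, is a separate computation (\cite[Lemma 7.2.2, Theorem 7.2.8]{Miyake}) that you would need to import or reprove. With that ingredient added, your argument goes through and produces the stated $A(s)$, $B(s)$, $C(s)$, $D(s)$ and $a_{n}(s)$, and the meromorphic continuation follows, as you say, from the continuation of $W$ together with the exponential decay of the Whittaker terms.
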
	
	\begin{proof}
           This is \cite[Theorem 7.2.9]{Miyake},
	   noting that $E_{k}(z,s;\theta,\varphi)$ differs from the one defined in \cite[(7.2.1)]{Miyake} by a
	    factor of $y^{s}$ and $\omega(y;\alpha,\beta)$ equals $y^{\beta}W(y;\alpha,\beta), \ \forall \ (\alpha,\beta) 
	    \in \mathbb{C}\times \mathbb{C}$. 
    \end{proof}	

We apply the above theorem  to compute the Fourier expansion of $E'(z,s)$. Recall
\begin{eqnarray}\label{E' and Ek}
	E'(z,s) 
	& = &E(z,s) \vert w_{C_{0}C_{f}M'} 
	\ \ =  \ \ E_{k-l,C_{0}C_{f}M'}(z,s,\psi\overline{\omega \chi^{2}}) \vert w_{C_{0}C_{f}M'} \nonumber \\
	&= & (C_{0}C_{f}M')^{-(k-l+2s)/2}E_{k-l}(z,s;\psi\overline{\omega \chi^{2}},\chi_{0}) 
	~~~\mathrm{ \ (by  \ direct \ computation).}
\end{eqnarray}
For convenience  we introduce the normalized Eisenstein series
\begin{align}
	G^{*}(z,s) \ \ & = \ \ \frac{(C_{0}C_{f}M')^{(k-l+2s)/2} \Gamma(k-l+s)} {(-2\pi i)^{k-l}\pi^{s}} E'(z,s) \label{G* and E'} \\
	&    \stackrel{\mathclap{\eqref{E' and Ek}}}{=} \ \  \frac{ \Gamma(k-l+s)} {(-2\pi i)^{k-l} \pi ^{s}}
	     E_{k-l}(z,s;\psi\overline{\omega \chi^{2}},\chi_{0}) \label{G* and Ek}.
\end{align}
If $s$ is an integer such that $s \leq 0$ and $k-l+s > 0$, then from \eqref{G* and Ek} and Theorem~\ref{thm5.2}, we 
have 
\begin{align}\label{5.10}
	      G^{*}(z,s) & =   \varepsilon(k-l,y,s,\psi\overline{\omega \chi^{2}})+2 (4 \pi y)^{s} 
	       \sum\limits_{n=1}^{\infty} \sum_{0 < c \vert n} \psi\overline{\omega \chi^{2}}(n/c)  
	        c^{k-l+2s-1}W(4\pi y n;k-l+s,s)e(nz) \nonumber \\
	       & = \varepsilon(k-l,y,s,\psi\overline{\omega \chi^{2}})+2(4\pi y)^{s}
	        \sum\limits_{n=1}^{\infty} \sum_{dd'= n} \psi\overline{\omega \chi^{2}}(d')  
	        d^{k-l+2s-1}W(4\pi y n;k-l+s,s) e(nz),
\end{align}
where 
$$\varepsilon(k-l,y,s,\psi\overline{\omega \chi^{2}}) = 
\frac{ \Gamma(k-l+s)} {(-2\pi i) ^{k-l}\pi^{s}}(C(s) y^{s}+D(s)y^{1-k+l-s}),$$ 
with $C(s)$, $D(s)$ denoting the same constants as in Theorem~\ref{thm5.2} (corresponding to 
$\theta = \psi\overline{\omega \chi^{2}}$, $\varphi= \chi_{0}$). The  term with $\overline{z}$ 
doesn't appear as for such $s$ we have $B(s)=0$ because the Gamma function $\Gamma(s)$ in the denominator 
of $B(s)$ has a pole at $s\leq 0$ and the function $a_n(s)W(4\pi yn/M;k+s,s)$ is holomorphic in $s$. 

\subsection{Integral representation via holomorphic projection}

Taking $s$ equal to $s-k+1 \leq 0$ in \eqref{G* and E'}, we get\footnote{This formula differs from \cite[(4.22)]{Panrankin}  by $(-1)^{s-k+1}$ 
and is
the source of the sign discrepancy in Theorem~\ref{maintheorem} mentioned in the first footnote.  
Without the sign in \eqref{5.11}, it becomes difficult to verify the abstract Kummer congruences in the proof of 
Proposition~\ref{prop Kummer} (2) later.}
\begin{eqnarray}\label{5.11}
       %\begin{split}
             E'(z,s-k+1) & = & (C_{0}C_{f}M')^{-(2s+2-k-l)/2}(-1)^{k-l} i^{k-l}  2^{k-l} \pi^{s-l+1} \nonumber \\
                         &   &  \quad \times \> \Gamma(s-l+1)^{-1} G^{*}(z,s-k+1).  
       %\end{split}
\end{eqnarray}
Substituting \eqref{5.11} and \eqref{5.7} into \eqref{5.5}, and substituting $C_{0} = pC = pC_{f}C_{g}$, 
we get  %integral representation for the distribution:
\begin{align}\label{5.13}
      \Psi_{s}^{(M')}(\chi) &= \frac{ 2^{k-l-1} i^{k-l}p^{k/2 -1} \overline{\chi}(C_{g}) C_{f}^{(k+l-2)/2} }  
       { \alpha (pM')  \Lambda(g)  C^{(2s+2-k-l)/2}} \cdot \frac{\langle   f_{0}^{\rho} \vert V_{C_{f}} , 
       (g(\chi) \vert V_{C_{f}} G^{*}(z,s-k+1))\vert U_{M'}w_{C_{0}C_{f}} \rangle_{C_{0}C_{f}}}
        {\langle f , f \rangle_{C_{f}}} \nonumber \\
        &= \gamma(M') \langle f , f \rangle_ {C_{f}}^{-1}  \ \langle   f_{0}^{\rho} \vert _{V_{C_{f}}} ,K^{*}(s-k+1)
         \vert U_{M'}w_{C_{0}C_{f}} \rangle_{C_{0}C_{f}},
\end{align}
in which we have set 
\begin{equation}\label{5.12}
      \begin{split}
            & \gamma(M') = 2^{k-l-1} i^{k-l} p^{k/2-1} C_{f}^{l-1}C_{g}^{(l-k)/2}   \alpha(pM')^{-1}
                 \Lambda(g)^{-1}, \\
            &   K^{*}(s) = C_{f}^{-s}C_{g}^{-s} \overline{\chi}(C_{g}) g(\chi)\vert V_{C_{f}} G^{*}(z,s).
     \end{split}
\end{equation}

Observe that $\gamma(M')$ is an algebraic number. Moreover, $i_{p}(\gamma(M'))$ is $p$-integral if $i_p(\Lambda(g))$ is a $p$-adic unit.
One can check this last fact using explicit formulas for the root number in terms of Gauss sums when the automorphic representation attached to $g$ has 
no supercuspidal local factors; it is apparently also true in general \cite[(5.4a), (5.4b)]{Hid88}. In any case $i_{p}(\gamma(M'))$ is bounded
independent of $M'$, which is all we shall need later.  

It follows 
from  \eqref{5.10} that for integers $l-k < s\leq 0$ we have 
\begin{equation}\label{K*(s) Fourier series}
      K^{\ast}(s) = \sum\limits_{n=1}^{\infty} \sum\limits_{C_{f}n_{1}+n_{2} =n} d(n_{1},n_{2};y,s)e(nz),
\end{equation}
where for $p \mid n$, the Fourier coefficients are given by\footnote{The formula differs from \cite[(4.27)]{Panrankin} by $(-1)^s$ due to the sign error mentioned in the previous footnote.}
\begin{equation}\label{Fourier coefficients of K*(s)}
      \begin{split}
	        d(n_{1},n_{2};y,s) =   C_{f}^{-s} & C_{g}^{-s} \overline{\chi}(C_{g}) \chi(n_{1}) b(n_{1}) \\ 
	             & \times 2 (4 \pi y)^{s} \sum\limits_{n_{2}=dd'} \psi\overline{\omega \chi^{2}}(d') 
	                 d^{2s+k-l-1}W(4\pi n_{2}y,s-l+k,s).
      \end{split}
\end{equation} 
Here we used that if $p\mid n$ there is no contribution to the coefficient of  $e(nz)$ in $K^{\ast}(s)$
 from the constant ($n_2 = 0$) term of Eisenstein series $G^{\ast}(z,s)$ because the coefficient of 
 $e(C_fn_1z)$ in $g(\chi)\vert V_{C_{f}}$ is zero for $p \mid n_1$ since $\chi(n_1) = 0$.  

The expression  \eqref{5.13}  for $\Psi_{s}^{(M')}(\chi)$ involves $K^*(s-k+1)$ whose Fourier 
coefficients contain Whittaker functions which are difficult to handle. To get rid of the Whittaker functions 
we consider its holomorphic  projection. We first check that  $\mathcal{H}ol(K^*(s-k+1))$ is defined.  
 From Proposition~\ref{prop2.9}, it follows that  if $(k+l)/2 < s \leq k-1$, then
$E_{k-l}(z,s-k+1; \psi \overline{ \omega \chi^{2}}, \chi_{0})$ belongs to
 $\mathcal{N}_{k-l}^{-s+k-1}(C_{0}C_{f}M',\psi\overline{ \omega\chi^{2}})$, hence so does 
 $G^*(z,s-k+1)$, by \eqref{G* and Ek}. Thus
 $K^{*}(s-k+1) \in \mathcal{N}S_{k}^{-s+k-1}(C_{0}C_{f}M',\psi)$ if $s>(k+l)/2$. So for such 
 $s$ one can define the holomorphic  projection $\mathcal{H}ol(K^{*}(s-k+1))$ of 
 $K^{*}(s-k+1)$ in the sense of Theorem~\ref{thm2.8}.
 However, for  $l \leq s \leq (k+l)/2 $ it is  not clear (to us) that
$K^{*}(s-k+1)$ is a nearly holomorphic form. 
So we cannot use Theorem~\ref{thm2.8} to define
the holomorphic projection  of $K^{*}(s-k+1)$ for $l \leq s \leq (k+l)/2$.
Nevertheless, by the discussion at the end of $\S$2, we
know that $K^{*}(s)$ is rapidly decreasing and satisfies the hypotheses
of Lemma \ref{hol}, with $c_{0}=0$. Thus one can define the holomorphic  projection of
$K^{\ast}(s-k+1)$ for any integer $l \leq s \leq k-1$. 
%Note that $K^{*}(s) \vert U_{M'}$ also satisfies the hypotheses
%of Lemma~\ref{hol}. 

We now study:
\begin{equation*}
	\widetilde{K}_{M'}(s) \coloneqq \mathcal{H}ol(K^{*}(s)) \vert U_{M'},
\end{equation*}
for integers $l-k+1 \leq s \leq 0$.
We begin by  computing the level and nebentypus of $\widetilde{K}_{M'}(s)$.  Since  
$K^{\ast}(s) \vert_{k} \gamma = \psi(\gamma) K^{\ast}(s)$ for all 
$\gamma \in \Gamma_{0}(C_{0}C_{f}M')$,  we have 
$\mathcal{H}ol(K^{*}(s)) \in S_{k}(C_{0}C_{f}M',\psi)$, by the remarks after Lemma~\ref{hol}. As $ p^{2} \mid M'$ we have 
$\mathcal{H}ol(K^{*}(s)) \vert U_{p} \in S_{k}(C_{0}C_{f}M'/p, \psi)$, by Lemma  \ref{prop2.1} (1). 
Repeatedly applying Lemma~\ref{prop2.1} (1) we get   
$\mathcal{H}ol(K^{*}(s)) \vert U_{M'}  \in S_{k}(C_{0}C_{f},\psi)$. %So we 
%get $\mathcal{H}ol(K^{*}(s)) \vert U_{M'} = \left( \mathcal{H}ol(K^{\ast}(s)) \vert U_{(M'/p)} \right)
%\vert U_{p} \in S_{k}(C_{0}C_{f}, \psi)$. Since $\mathcal{H}ol$ is $\gl_{2}^{+}(\mathbb{R})$ 
%equivariant, we have %$\mathcal{H}ol(K^{\ast}(s) \vert U_{M'})=
%$\widetilde{K}_{M'}(s) =  \mathcal{H}ol(K^{\ast}(s)) \vert U_{M'}
 %\in S_{k}(C_{0}C_{f}, \psi)$. 

 We now state the main result of this 
%It can  be checked that $\mathcal{H}ol$ commutes with the $\vert_{k}$ action of $\gl_{2}^{+}(\mathbb{R})$ 
section.
\begin{proposition}\label{propn5.3}
	   Let the notation be as above. For $s \in \mathbb{Z}$ with $l \leq s \leq k-1$ one has following equality
	   \begin{equation}\label{final expression of measure}
		      \Psi_{s}^{(M')}(\chi) = \gamma(M') \langle f,f\rangle^{-1}_{C_{f}} \langle 
		      f_{0}^{\rho}\vert V_{C_{f}},\widetilde{K}_{M'}(s-k+1)\vert w_{C_{0}C_{f}} \rangle_{C_{0}C_{f}}.
	 \end{equation}
	 Moreover, for $s \in \mathbb{Z}$ with $l-k+1 \leq s \leq 0$ we have
	 \begin{equation}\label{first expression for K_M'}
	         \widetilde{K}_{M'}(s) = \sum\limits_{n=1}^{\infty} \sum\limits_{C_{f}n_{1}+n_{2} =M'n} 
	          d(n_{1},n_{2};s,\chi)e(nz)  \in  S_{k}(C_{0}C_{f},\psi)
	 \end{equation}
	is a cusp form with algebraic Fourier coefficients given by\footnote{The formula differs from \cite[(4.29)]{Panrankin} by the same sign
as in the previous footnote.} 
	\begin{equation}\label{5.16}
		  d(n_{1},n_{2};s,\chi) =  2  C_{f}^{-s}  C_{g}^{-s} \overline{\chi}(C_{g}) \chi(n_{1}) b(n_{1}) 
		   \sum\limits_{n_{2}=dd'} \psi\overline{\omega \chi^{2}}(d') d^{2s+k-l-1} P_{s}(n_{2},M' n)
      \end{equation} 
	and 
	\begin{equation}\label{5.17}
	       \begin{split}
		         P_{s}(x,y) 
		         & = \sum\limits_{i=0}^{-s} (-1)^{i} \binom{-s}{i} \frac{\Gamma(s+k-l)\Gamma(k-i-1)}
		           {\Gamma(s+k-l-i) \Gamma(k-1) } x^{-s-i} y^{i} \\
		       &=x^{-s}+\frac{y}{\Gamma(k-1)}Q_{s}(x,y), \  \mathrm{where \ } s \leq 0 \mathrm{ ~and} 
		          ~Q_{s}(x,y)   \in \mathbb{Z}[x,y]. 
		\end{split}
	\end{equation}
\end{proposition}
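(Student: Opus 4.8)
The plan is to treat the two assertions separately. The identity \eqref{final expression of measure} will follow from the defining (orthogonality) property of holomorphic projection together with the self-adjointness relation \eqref{2.2}, while the Fourier expansion \eqref{first expression for K_M'}--\eqref{5.17} is a direct coefficientwise computation starting from the expansion \eqref{K*(s) Fourier series}--\eqref{Fourier coefficients of K*(s)} of $K^{\ast}(s)$ and the formula of Lemma~\ref{hol}. Throughout set $F = f_{0}^{\rho}\vert V_{C_{f}} \in S_{k}(C_{0}C_{f},\overline{\psi})$.

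For \eqref{final expression of measure} I would start from \eqref{5.13} and note that $\widetilde{K}_{M'}(s-k+1)\vert w_{C_{0}C_{f}} = \mathcal{H}ol(K^{\ast}(s-k+1))\vert U_{M'}w_{C_{0}C_{f}}$, so the two sides of \eqref{final expression of measure} differ by $\gamma(M')\langle f,f\rangle^{-1}_{C_{f}}\langle F, \Delta\vert U_{M'}w_{C_{0}C_{f}}\rangle_{C_{0}C_{f}}$, where $\Delta = K^{\ast}(s-k+1) - \mathcal{H}ol(K^{\ast}(s-k+1))$ has level $C_{0}C_{f}M'$ and is rapidly decreasing. I would then move the operators off $\Delta$ and onto $F$: first \eqref{2.2} passes $w_{C_{0}C_{f}}$ to $F$ (legitimate since $\Delta\vert U_{M'}$ has level $C_{0}C_{f}$ and $w_{C_{0}C_{f}}$ normalizes $\Gamma_{0}(C_{0}C_{f})$ with scalar square), then the Petersson adjoint of $U_{M'}\colon S_{k}(C_{0}C_{f}M')\to S_{k}(C_{0}C_{f})$ replaces $F\vert w_{C_{0}C_{f}}$ by a cusp form $\widetilde{F}\in S_{k}(C_{0}C_{f}M')$. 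The pairing becomes $\langle \widetilde{F}, \Delta\rangle_{C_{0}C_{f}M'}$ up to a nonzero constant, which vanishes because, by Lemma~\ref{lemma2.10} and the extension of $\mathcal{H}ol$ after Lemma~\ref{hol}, $K^{\ast}(s-k+1)$ and $\mathcal{H}ol(K^{\ast}(s-k+1))$ pair identically against every cusp form of level $C_{0}C_{f}M'$.

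The main obstacle is exactly this commutation: the claim that $\mathcal{H}ol$ commutes with $\vert U_{M'}w_{C_{0}C_{f}}$. For $(k+l)/2 < s \leq k-1$ the form $K^{\ast}(s-k+1)$ is genuinely nearly holomorphic and one may invoke the $\mathrm{GL}_{2}^{+}(\mathbb{R})$-equivariance of Theorem~\ref{thm2.8} directly; for $l \leq s \leq (k+l)/2$, where near-holomorphy is unavailable, the adjointness argument above is what carries it through, since it uses only the characterizing property of $\mathcal{H}ol$ (orthogonality of $\Delta$ to cusp forms) and not any structural description of $K^{\ast}$.

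For the Fourier expansion I would compute coefficientwise. By \eqref{K*(s) Fourier series} the coefficient of $e(nz)$ in $K^{\ast}(s)$ is $D_{n}(y) = \sum_{C_{f}n_{1}+n_{2}=n} d(n_{1},n_{2};y,s)$, hence the coefficient of $e(nx)$ is $D_{n}(y)e^{-2\pi ny}$; Lemma~\ref{hol} then gives the $N$-th coefficient of $\mathcal{H}ol(K^{\ast}(s))$ as $\frac{(4\pi N)^{k-1}}{(k-2)!}\int_{0}^{\infty} D_{N}(y)e^{-4\pi Ny}y^{k-2}\,dy$ (the two factors $e^{-2\pi Ny}$ combine), and $\vert U_{M'}$ selects $N = M'n$. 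I would stress this doubling of the exponential, as it is what makes the powers of $2$ and $\pi$ cancel correctly. Substituting the explicit polynomial \eqref{5.8} for $W(4\pi n_{2}y; s+k-l, s)$ (valid since $s\leq 0$, with $r=-s$), the $y$-integral reduces to $\Gamma$-integrals $\int_{0}^{\infty} y^{k-2-i}e^{-4\pi M'ny}\,dy = \Gamma(k-1-i)/(4\pi M'n)^{k-1-i}$; collecting terms, the prefactor $\frac{(4\pi M'n)^{k-1}}{(k-2)!}$ cancels all dependence on $4\pi$ and leaves exactly $P_{s}(n_{2},M'n)$ as in \eqref{5.17}, recovering \eqref{5.16} and \eqref{first expression for K_M'}. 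Finally I would record that the range constraint $0\leq i\leq -s\leq k-l-1\leq k-2$ guarantees convergence and yields the shape $P_{s}(x,y) = x^{-s} + \frac{y}{\Gamma(k-1)}Q_{s}(x,y)$ with $Q_{s}\in\mathbb{Z}[x,y]$: the $i=0$ term is $x^{-s}$, while for $i\geq 1$ the coefficient $\binom{-s}{i}\frac{\Gamma(s+k-l)}{\Gamma(s+k-l-i)}(k-i-2)!$ is a product of integers, and algebraicity of the Fourier coefficients is immediate from that of $b(n_{1})$ and the values of $\chi,\psi,\omega$.
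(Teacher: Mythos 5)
Your proposal is correct and takes essentially the same route as the paper: the identity \eqref{final expression of measure} rests on the characterizing property of $\mathcal{H}ol$ (Lemma~\ref{hol}, Lemma~\ref{lemma2.10}) together with Petersson adjointness of $w_{C_{0}C_{f}}$ and $U_{M'}$ (via \eqref{2.2}, \eqref{2.10} and Lemma~\ref{lemma2.2}), which is precisely the mechanism behind the paper's commutation claims $\mathcal{H}ol(\Phi\vert w_{N})=\mathcal{H}ol(\Phi)\vert w_{N}$ and $\mathcal{H}ol(\Phi\vert U_{p})=\mathcal{H}ol(\Phi)\vert U_{p}$, and the Fourier expansion is the same coefficientwise application of Lemma~\ref{hol} with the Whittaker polynomial \eqref{5.8} and Gamma integrals. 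Your reorganizations — phrasing the projection step as orthogonality of the difference $\Delta = K^{\ast}-\mathcal{H}ol(K^{\ast})$ against cusp forms of level $C_{0}C_{f}M'$ rather than as a commutation lemma, and applying $\mathcal{H}ol$ before $U_{M'}$ instead of after (the two computations agree under the substitution $y \mapsto M'y$) — are cosmetic, not a different method.
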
	
\begin{proof}
	  The  proof of the lemma is an application of the holomorphic projection lemma (Lemma \ref{hol}).
	  %From Lemma~\ref{lemma2.10} and Lemma~\ref{hol} we have
	  %
       %\begin{align*}
	%         \langle F, G \rangle_{N} 
	%         = \langle F, \mathcal{H}ol(G) \rangle_{N},\ \forall \  F\in S_{k}(N, \overline{\psi}),\ G \in
	%          \mathcal{N}_{k}(N,\overline{\psi}). 
        %		    % \mathcal{H}ol(F \vert_{k} \gamma) 
       %		    % &= (\mathcal{H}ol(F) )\vert_{k} \gamma, \ \forall \ F \in \mathcal{N}_{k}(\Gamma),  \ 
     %		      %     \gamma \in \gl_{2}^{+}(\mathbb{R})  \ \ \  && (\mathrm{Theorem}~\ref*{thm2.8}).
   %	  \end{align*}  
	  %
	%Further, note that $\mathcal{H}ol$ is $\gl_{2}^{+} (\R)$ equivariant. 
        We first note that $\mathcal{H}ol$ commutes with the action of the $w_N$-operator. Indeed, by Lemma~\ref{hol} and \eqref{2.2}, we have
$$\langle h, \mathcal{H}ol(\Phi \vert w_N) \rangle_N =  \langle h, \Phi \vert w_N \rangle_N = \langle h \vert w_N, \Phi \rangle_N
  = \langle h \vert w_N,  \mathcal{H}ol(\Phi) \rangle_N = \langle h, \mathcal{H}ol(\Phi) \vert w_N \rangle_N, $$
for all modular rapidly decreasing $\Phi$ and all cusp forms $h$ of weight $k$ and level $N \geq 1$, %$\Gamma_0(N)$, $N \geq 1$,
 whence  
$\mathcal{H}ol(\Phi \vert w_N) =  \mathcal{H}ol(\Phi) \vert w_N$. A similar argument shows that  $\mathcal{H}ol$ commutes with the 
$U_p$-operator. Thus, by Lemma~\ref{lemma2.10} and Lemma~\ref{hol}, we have
	 \begingroup
	 \allowdisplaybreaks
	 \begin{eqnarray*}
            \langle f_{0}^{\rho} \vert V_{C_{f}} ,K^{*}(s-k+1) \vert U_{M'}w_{C_{0}C_{f}} \rangle_{C_{0}C_{f}}
             & = & \langle f_{0}^{\rho} \vert V_{C_{f}} , 
                    \mathcal{H}ol(K^{*}(s-k+1) \vert U_{M'}w_{C_{0}C_{f}})  \rangle_{C_{0}C_{f}} \\ % \quad \text{(by Lemma~\ref{lemma2.10})} \\ 
        	    &  = & \langle  f_{0}^{\rho} \vert V_{C_{f}} , 
        	            \mathcal{H}ol(K^{*}(s-k+1)) \vert U_{M'}w_{C_{0}C_{f}}  \rangle_{C_{0}C_{f}} \\
	        &  = &  \langle f_{0}^{\rho} \vert V_{C_{f}} ,
	             \widetilde{K}_{M'}(s-k+1) \vert w_{C_{0}C_{f}} \rangle_{C_{0}C_{f}}.
	\end{eqnarray*}
	\endgroup
	Substituting the above expression in \eqref{5.13}, we obtain \eqref{final expression of measure}.
        It follows from \eqref{K*(s) Fourier series}, \eqref{Fourier coefficients of K*(s)} that
	\begingroup
	\allowdisplaybreaks
	\begin{align}\label{5.21}
	      K^{*}(s) \vert U_{M'} & = ~~ M'^{k/2-1} \sum\limits_{u \ \mathrm{mod} \ M' } K^{*} (s) \vert 
	         \begin{pmatrix} 1 & u \\ 0 & M' \end{pmatrix} \nonumber \\
	       & \stackrel{\mathclap{\eqref{K*(s) Fourier series}}}{=} ~~  M'^{-1} \sum\limits_{u \ \mathrm{mod} \ M' } 
	       \sum\limits_{n=1}^{\infty} \sum\limits_{C_{f}n_{1}+n_{2} =n}  d(n_{1},n_{2};y/M',s)  e(n(z+u)/M') 
	        \nonumber \\
	       &= ~~\sum\limits_{n=1}^{\infty} \sum\limits_{C_{f}n_{1}+n_{2} =n}  d(n_{1},n_{2};y/M',s) e(nz/M') 
	           M'^{-1} \sum\limits_{u \ \mathrm{mod} \ M' } e(un/M')  \nonumber \\
           &=~~\sum\limits_{n=1}^{\infty} \sum\limits_{C_{f}n_{1}+n_{2} =M'n} d(n_{1},n_{2};y/M',s)e(nz).
	\end{align}
	\endgroup

	Now we use Lemma \ref*{hol} to compute the  Fourier coefficients of $\widetilde{K}_{M'}(s-k+1)
	=\mathcal{H}ol(K^{*}(s-k+1) \vert U_{M'})$ for $l \leq s \leq k-1$. Let $s'=s-k+1$ then $l-k+1 \leq s' \leq 0$. 
	 From \eqref{5.21} and Lemma \ref{hol} it follows that
	\begin{align}\label{5.19}
	     	\widetilde{K}_{M'}(s')  = \sum\limits_{n=1}^{\infty} \sum\limits_{C_{f}n_{1}+n_{2} =M'n } 
	     	\frac{(4\pi n )^{k-1}} {\Gamma(k-1)}\Big ( \int \limits_{0}^{\infty} d(n_{1},n_{2};y/M',s') 
	     	 e^{-2\pi n y} e^{-2 \pi n y}y^{k-2} \ dy \Big ) e(nz).
	\end{align}
        Note that if $C_{f}n_{1}+n_{2}=M'n$, the quantity $d(n_{1},n_{2};y/M',s)$ is as in \eqref{Fourier coefficients of K*(s)}, with
        $y$ replaced by $y/M'$, because 
	$p \mid M'n$, since $p \mid M'$. We get
        \begin{eqnarray}
        \label{5.24}
        d(n_{1},n_{2};s',\chi) & \coloneqq & \frac{(4 \pi n)^{k-1}}{\Gamma(k-1)} 
	\int_{0}^{\infty}d(n_{1},n_{2};y/M',s')e^{-4 \pi ny} y^{k-2}  dy, \nonumber \\   
        %for $C_{f}n_{1}+n_{2}=M'n$. 
	%Since $p \mid M'$ it follows from  \eqref{Fourier coefficients of K*(s)} that
	%
	%\begingroup
	%\allowdisplaybreaks
        %\begin{align}\label{5.24}
	      %\frac{(4\pi n)^{k-1}}{\Gamma(k-1)}  \int\limits_{0}^{\infty} d(&n_{1},n_{2};y/M',s') 
	      %e^{-4 \pi ny} y^{k-2} dy \nonumber \\
	      & = & 2 (C_{f}C_{g})^{-s'} \overline{\chi}(C_{g}) \chi(n_{1}) b(n_{1})\sum\limits_{n_{2}=dd'} 
	         \psi\overline{ \omega \chi^{2}}(d') d^{2s'+k-l-1}  \nonumber \\
     	      &   &  \times\frac{(4\pi n)^{k-1}}{\Gamma(k-1)} \int\limits_{0}^{\infty}
 	     \Big ( \frac{4\pi y}{M'} \Big )^{s'} W\left(\frac{4\pi n_{2}y}{M'},s'+k-l,s'\right)e^{-4 \pi ny} y^{k-2} dy . 
       %\end{align}
       %\endgroup
       \end{eqnarray}
	Since $l-k+1 \leq s' \leq 0$, we can use \eqref{5.8} to compute $W(4\pi n_{2}y/M',s'+k-l,s')$. We obtain
    %
    %\begingroup
    %\allowdisplaybreaks
	 \begin{align*}
	   	&  \frac{(4\pi n)^{k-1}}{\Gamma(k-1)} \int\limits_{0}^{\infty}\Big (\frac{4 \pi y}{M'} \Big )^{s'} 
	   	     W \Big (\frac{4\pi n_{2}y}{M'},s'+k-l,s' \Big )e^{-4 \pi ny} y^{k-2} dy\\
		& \hspace{1 cm} = \sum\limits_{i=0}^{-s'} (-1)^{i} \binom{-s'}{i}\frac{\Gamma(s'+k-l)} 
		     {\Gamma(s'+k-l-i)\Gamma(k-1)}\int\limits_{0}^{\infty}(4 \pi n)^{k-1} \Big ( \frac{4\pi y}{M'}\Big )^{s'}
		       \Big ( \frac{4 \pi n_{2}y}{M'} \Big )^{-s'-i}e^{-4 \pi ny} y^{k-2} dy \\
		 & \hspace{1 cm} = \sum\limits_{i=0}^{-s'} (-1)^{i} \binom{-s'}{i} 
		      \frac{\Gamma(s'+k-l)} {\Gamma(s'+k-l-i)\Gamma(k-1)}n_{2}^{-s'-i}M'^{i}
		      \int\limits_{0}^{\infty}(4 \pi ny)^{k-1} (4\pi y)^{-i}  e^{-4 \pi n y}  \frac{dy}{y} \\
		 & \hspace{1 cm} = \sum\limits_{i=0}^{-s'} (-1)^{i} \binom{-s'}{i}
		      \frac{\Gamma(s'+k-l)} {\Gamma(s'+k-l-i)\Gamma(k-1)}n_{2}^{-s'-i} (M'n)^{i} 
		      \int\limits_{0}^{\infty}y^{k-1}  y^{-i}  e^{- y}  \frac{dy}{y} \\
		 & \hspace{1 cm}= \sum\limits_{i=0}^{-s'} (-1)^{i}  \binom{-s'}{i}
		   \frac{\Gamma(s'+k-l)\Gamma(k-i-1)} {\Gamma(s'+k-l-i)\Gamma(k-1)}n_{2}^{-s'-i} (M'n)^{i} \\
		 & \hspace{1 cm}= P_{s'}(n_{2},M'n).
	\end{align*}
	%\endgroup
Therefore, for every $n_{1},n_{2}$ such that $C_{f}n_{1}+n_{2}=M'n$,  \eqref{5.24} becomes 
\begin{eqnarray*}
       d(n_{1},n_{2};s',\chi) 
      & = & 2 (C_{f}C_{g})^{-s'} \overline{\chi}(C_{g}) \chi(n_{1}) b(n_{1})
          \sum\limits_{n_{2}=dd'} \psi\overline{ \omega \chi^{2}}(d') d^{2s'+k-l-1} P_{s'}(n_{2},M'n).
\end{eqnarray*}	
	Substituting  the above  expression in \eqref{5.19} finishes the proof.
\end{proof}
\section{Kummer congruences for the distributions}
In this section, we show that the distributions in \eqref{4.6} for $s=l+r$, where $0 \leq r \leq k-l-1$ patch
together into a measure, by verifying the abstract Kummer congruences.

%\eqref{4.6} to  $s=l+r$, where $0 \leq r \leq k-l-1$.
By Proposition~\ref{propn5.3}, with $s = l +r$, where $0 \leq r \leq k-l-1$, we have 
\begin{align}\label{6.1}
        \Psi_{l+r}^{(M')}(\chi) & = \gamma(M') \langle f, f  \rangle_{C_{f}}^{-1} 
         \langle f_{0}^{\rho}\vert V_{C_{f}} , \widetilde{K}_{M'}(r-k+l+1) \vert w_{C_{0}C_{f}} \rangle_{C_{0}C_{f}} 
         \nonumber \\
        &\stackrel{\eqref{2.2}}{=} \gamma(M') \langle f,f \rangle_{C_{f}}^{-1} 
        \langle f_{0}^{\rho}\vert V_{C_{f}}  w_{C_{0}C_{f}}, \widetilde{K}_{M'}(r-k+l+1)  \rangle_{C_{0}C_{f}}. 
\end{align}
By Corollary~\ref{Psi is algebraic} and \eqref{5.12}, we have  $\Psi_{s}^{(M')}(\chi)$ and 
$\gamma(M')$ are algebraic numbers.  Hence, 
\begin{align}\label{6.2}
          \langle f,f \rangle_{C_{f}}^{-1} \langle f_{0}^{\rho}\vert V_{C_{f}} , \widetilde{K}_{M'}(r-k+l-1)
          \vert w_{C_{0}C_{f}}  \rangle_{C_{0}C_{f}} \in \overline{\mathbb{Q}}.
\end{align}
Further, note  that the cusp form $\widetilde{K}_{M'}(r-k+l+1)$ has algebraic Fourier coefficients.  Let 
\begin{align*}
      S_{k}(C_{0}C_{f},\psi;\overline{\mathbb{Q}}) = \lbrace h \in S_{k}(C_{0}C_{f},\psi ) 
      \mid h \mathrm{ \ has \ algebraic \ Fourier \ coeffecients } \rbrace.
\end{align*}
We now claim that $f_{0}^{\rho} \vert V_{C_{f}}w_{C_{0}C_{f}} \in S_{k}(C_{0}C_{f},\psi;\Qbar)$.  
Clearly $f_{0}^{\rho} \vert V_{C_{f}}w_{C_{0}C_{f}}$ belongs to $S_{k}(C_{0}C_{f},\psi)$. 
So it is enough to show that the Fourier coefficients of $f_{0}^{\rho} \vert V_{C_{f}}w_{C_{0}C_{f}}$ 
are algebraic. Observe that
\begin{align*}
      f_{0}^{\rho} \vert V_{C_{f}}w_{C_{0}C_{f}}
     &\stackrel{\mathclap{\eqref{4.1}}}{=}~~ f^{\rho} \vert V_{C_{f}}w_{C_{0}C_{f}} - 
      \alpha'(p)f^{\rho}\vert{V_{p}} V_{C_{f}}w_{C_{0}C_{f}}\\ 
     &=~~C_{f}^{-k/2} f^{\rho}\vert w_{C_{0}} -\alpha'(p) (pC_{f})^{-k/2} f^{\rho}\vert w_{C_{f}C_{g}}
      ~  \mathrm{(from \ the \ definition\ of \ } V_{p},V_{C_{f}},w_{C_{0}C_{f}} )\\
     &\stackrel{\mathclap{\eqref{4.17}}}{=} ~~(pC_{g}C_{f}^{-1})^{k/2} f^{\rho}\vert w_{C_{f}} V_{pC_{g}} -
       \alpha'(p) (pC_{f}C_{g}^{-1})^{-k/2}f^{\rho}\vert w_{C_{f}} V_{C_{g}}\\
    &\stackrel{\mathclap{\eqref{rootnumber}}}{=}~~ (pC_{g}C_{f}^{-1})^{k/2} \Lambda(f^{\rho}) 
     f^{\rho}\vert  V_{pC_{g}} - \alpha'(p) (pC_{f}C_{g}^{-1})^{-k/2} \Lambda(f^{\rho}) f^{\rho}\vert  V_{C_{g}}.
\end{align*}
Since $f$ is primitive, it follows that  $f_{0}^{\rho} \vert V_{C_{f}}w_{C_{0}C_{f}}$ has algebraic 
Fourier coefficients.  Define the linear functional $\mathcal{L}: S_{k}(C_{0}C_{f},\psi) 
\longrightarrow \mathbb{C}$,  by 
\begin{align}\label{L operator}
	\mathcal{L}(K) = \frac{\langle f_{0}^{\rho}\vert V_{C_{f}} w_{C_{0}C_{f}},
	 K  \rangle_{C_{0}C_{f}}}{\langle f,f \rangle_{C_{f}}}.
\end{align}
 We note from \eqref{6.1} and
 \eqref{L operator} that, for every finite order character $\chi : \Z_p^\times \rightarrow \C^\times$,
\begin{equation}\label{6.3}
	\Psi^{(M')}_{l+r} (\chi) = \gamma(M') \mathcal{L}(\widetilde{K}_{M'}( r-k+l+1)).
\end{equation}
\begin{lemma}\label{Lemma 6.1}
	Let $\mathcal{L}$ be defined as above. Then
	\begin{enumerate}[label={\emph{(\arabic*)}}]
		\item $\mathcal{L}$ is defined over $ \overline{\mathbb{Q}}$, i.e., 
		          $\mathcal{L}(S_{k}(C_{0}C_{f},\psi;\overline{\mathbb{Q}})) \subset \overline{\mathbb{Q}}$.
		\item Let $K(z) = \sum_{n=1}^{\infty} a(n,K) e(nz)$ be an element of $S_{k}(C_{0}C_{f},\psi;\Qbar)$.
		         Then there exists $m \in \mathbb{N}$ and $\xi_{1}, \ldots, \xi_{m} \in \Qbar$ such that
		\begin{equation}\label{6.4}
		      \mathcal{L}(K)=  \sum\limits_{n=1}^{m} \xi_{n} a(n,K).
		\end{equation} 
	\end{enumerate} 
\end{lemma}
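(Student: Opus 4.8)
The plan is to establish part (1) first, since part (2) then follows by a routine finite-dimensional linear-algebra argument. The arithmetic content of (1) is already prepared by the explicit computation carried out just before the lemma: up to the algebraic scalar $\Lambda(f^{\rho})$ and the explicit powers of $p$, $C_{f}$, $C_{g}$, the form $g_{0}:=f_{0}^{\rho}\vert V_{C_{f}}w_{C_{0}C_{f}}$ is a $\Qbar$-linear combination of the oldform translates $f\vert V_{pC_{g}}$ and $f\vert V_{C_{g}}$ of the newform $f$ of conductor $C_{f}$ (the scalar $f^{\rho}\vert w_{C_{f}}=\Lambda(f^{\rho})f$ comes from \eqref{rootnumber}, which also explains why the nebentypus is $\psi$). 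In particular $g_{0}$ lies in the $\Qbar$-span $V$ of the translates $\{f\vert V_{d}\}$, i.e.\ in the $f$-isotypic old subspace of $S_{k}(C_{0}C_{f},\psi)$.

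For $K\in S_{k}(C_{0}C_{f},\psi;\Qbar)$ I would then use that the Petersson pairing is orthogonal across distinct newform-isotypic components, so that $\langle g_{0},K\rangle_{C_{0}C_{f}}=\langle g_{0},K_{[f]}\rangle_{C_{0}C_{f}}$, where $K_{[f]}$ is the projection of $K$ onto $V$. This projection is cut out by a polynomial in the Hecke operators $T_{q}$, $q\nmid C_{0}C_{f}$, whose eigenvalues are algebraic, so it is defined over $\Qbar$; hence $K_{[f]}$ again has algebraic Fourier coefficients and can be written as $K_{[f]}=\sum_{e}c_{e}\,f\vert V_{e}$ with $c_{e}\in\Qbar$. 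Expanding $\langle g_{0},K\rangle_{C_{0}C_{f}}$ sesquilinearly thereby reduces part (1) to the algebraicity of the individual ratios $\langle f\vert V_{d},f\vert V_{e}\rangle_{C_{0}C_{f}}/\langle f,f\rangle_{C_{f}}$.

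These ratios are the real obstacle. I would treat them by the standard computation of Petersson products of oldform translates of a fixed newform: using the adjoints of the operators $U_{d}$, $V_{d}$, $w_{N}$ recalled in Section~\ref{operators}, together with \eqref{rootnumber}, each $\langle f\vert V_{d},f\vert V_{e}\rangle_{C_{0}C_{f}}$ is an algebraic multiple of $\langle f,f\rangle_{C_{0}C_{f}}$, which is in turn the rational index multiple $[\Gamma_{0}(C_{f}):\Gamma_{0}(C_{0}C_{f})]\cdot\langle f,f\rangle_{C_{f}}$. This is Shimura's classical algebraicity result; the delicate point is the bookkeeping of nebentypus and of the adjoint relations so that the transcendental factor $\langle f,f\rangle$ cancels exactly, leaving an algebraic number. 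This establishes part (1).

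Granting (1), part (2) is pure linear algebra. The space $S_{k}(C_{0}C_{f},\psi)$ is finite-dimensional and admits a $\Qbar$-basis $h_{1},\dots,h_{D}$ of forms with algebraic Fourier coefficients; since a cusp form is determined by its Fourier expansion, there is an $m$ (an explicit Sturm bound suffices) for which the matrix $\bigl(a(n,h_{j})\bigr)_{1\le n\le m,\,1\le j\le D}$ has rank $D$ over $\Qbar$. I can therefore solve for the coordinates $c_{j}$ of $K=\sum_{j}c_{j}h_{j}$ as $\Qbar$-linear combinations $c_{j}=\sum_{n\le m}\mu_{jn}\,a(n,K)$ of the first $m$ coefficients of $K$. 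Substituting into $\mathcal{L}(K)=\sum_{j}c_{j}\,\mathcal{L}(h_{j})$ and invoking $\mathcal{L}(h_{j})\in\Qbar$ from part (1) yields $\mathcal{L}(K)=\sum_{n=1}^{m}\xi_{n}\,a(n,K)$ with $\xi_{n}=\sum_{j}\mu_{jn}\,\mathcal{L}(h_{j})\in\Qbar$, as desired.
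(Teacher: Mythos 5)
Your proof is correct, but it takes a genuinely different route from the paper's in both parts. For part (1) the paper avoids Petersson\-/product computations altogether: it fixes an orthogonal basis $f_{1},\ldots,f_{d}$ of $S_{k}(C_{0}C_{f},\psi;\Qbar)$ with $f_{1}=f_{0}^{\rho}\vert V_{C_{f}}w_{C_{0}C_{f}}$, observes via \eqref{6.2} (ultimately Shimura's algebraicity theorem, through Corollary~\ref{Psi is algebraic}) that the particular forms $\widetilde{K}_{M'}(r-k+l+1)$ have algebraic $\mathcal{L}$-value, and then, writing $\widetilde{K}_{M'}(r-k+l+1)=\sum_i c_i f_i$, uses orthogonality to get $\mathcal{L}(\widetilde{K}_{M'}(r-k+l+1))=c_{1}\mathcal{L}(f_{1})$; provided some twisted value $\Psi^{(M')}_{l+r}(\chi)$ is nonzero this forces $c_1 \neq 0$ and hence $\mathcal{L}(f_{1})\in\Qbar$, while the degenerate case where all these values vanish is dismissed by taking $\mu\equiv 0$ in Theorem~\ref{maintheorem}. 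Your argument is instead direct and unconditional: you expand $f_{0}^{\rho}\vert V_{C_{f}}w_{C_{0}C_{f}}$ in the $f$-isotypic oldspace (note the paper's last display before the lemma should read $f\vert V_{pC_{g}}$, $f\vert V_{C_{g}}$ rather than $f^{\rho}\vert V_{pC_{g}}$, $f^{\rho}\vert V_{C_{g}}$, as your version correctly has it, since $f^{\rho}\vert w_{C_{f}}=\Lambda(f^{\rho})f$), project $K$ onto that isotypic piece by a $\Qbar$-rational Hecke polynomial, and reduce to the classical algebraicity of the ratios $\langle f\vert V_{d},f\vert V_{e}\rangle_{C_{0}C_{f}}/\langle f,f\rangle_{C_{f}}$, which you correctly identify as the crux and which is indeed a standard (if fiddly) adjoint computation. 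What your route buys: it needs no non-vanishing hypothesis, and it supplies precisely the rational-structure facts (isotypic orthogonality over $\Qbar$ and algebraic Petersson ratios) that the paper's opening step --- \enquote{choose an orthogonal basis of $S_{k}(C_{0}C_{f},\psi;\Qbar)$ containing $f_1$} --- tacitly presupposes. What the paper's route buys: no inner-product computation at all, the algebraicity being recycled from the $L$-value side. For part (2) the two arguments are parallel duality statements: the paper uses the perfect pairing $\mathbb{T}_{k}(C_{0}C_{f},\psi)\times S_{k}(C_{0}C_{f},\psi;\Qbar)\to\Qbar$, $(T,K)\mapsto a(1,TK)$, writing $\mathcal{L}=a(1,T\,\cdot\,)$ with $T=\sum_{n\leq m}\xi_{n}T_{n}$ and using $a(1,T_{n}K)=a(n,K)$, whereas you use the full rank of a truncated Fourier-coefficient matrix (a Sturm bound); yours is more elementary and avoids the Hecke algebra and the citation \cite{Ghate_congruences}, while the paper's gives $\mathcal{L}$ a Hecke-theoretic interpretation. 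Both are valid.
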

\begin{proof}
	 Choose an orthogonal basis $f_{1},\ldots, f_{d}$ of $S_{k}(C_{0}C_{f},\psi;\overline{\mathbb{Q}} )$
	  such that $f_{1}=f_{0}^{\rho}\vert V_{C_{f}} w_{C_{0}C_{f}}$. By Proposition~\ref{propn5.3}, we 
	   know that $\widetilde{K}_{M'}(r-k+l+1) \in S_{k}(C_{0}C_{f},\psi;\overline{\mathbb{Q}})$ for all 
	   integers $0 \leq r \leq k-l-1 $. Let $\widetilde{K}_{M'}(r-k+l+1)= \sum_{i=1}^{d} c_{i}f_{i}$, for some 
	   $c_{i} \in \Qbar$. It follows from \eqref{6.2} and orthogonality that 
	$$ 
	     \mathcal{L}(\widetilde{K}_{M'}(r-k+l+1)) = c_{1} \mathcal{L}(f_{0}^{\rho}
	      \vert V_{C_{f}}w_{C_{0}C_{f}}) \in \Qbar.
	 $$
	Choose $r$, $\chi$ such that $\Psi_{l+r}^{(M')}(\chi)= \gamma(M')\mathcal{L} 
	(\widetilde{K}_{M'}(r-k+l+1))  \neq 0$. Such a choice exists, otherwise  all the twisted 
     $L$-values of the Rankin product $L$-function vanish by \eqref{4.8} and 
     Proposition~\ref{Propn 4.1}, so the $p$-adic Rankin product $L$-function, or more 
     precisely the measure $\mu$ in Theorem~\ref{maintheorem}, can be taken to 
     be identically zero.  Hence, $c_1  \neq 0$ and 
     $\mathcal{L}(f_{0}^{\rho} \vert V_{C_{f}}w_{C_{0}C_{f}})  \in \Qbar$. Therefore 
	$\mathcal{L}(S_{k}(C_{0}C_{f},\psi;\overline{\mathbb{Q}}))= \Qbar \mathcal{L}(f_{0}^{\rho} \vert 
	V_{C_{f}}w_{C_{0}C_{f}}) = \Qbar$. This finishes the proof of the first part.

     Let $\mathbb{T}_{k}(C_{0}C_{f},\psi)$ denote the $\Qbar$-subalgebra of 
     End$_{\mathbb{C}}(M_{k}(C_{0}C_{f},\psi))$ generated by the Hecke operators $T_{n}$, for all 
     $n \in \mathbb{N}$. Clearly $\mathbb{T}_{k}(C_{0}C_{f},\psi)$ is a finite dimensional $\Qbar$-vector space.  
    %Therefore, $\mathbb{T}_{k}(C_{0}C_{f},\psi)$ is a finite dimensional $\Qbar$-vector space. 
    By \cite[Theorem 4.5.13]{Miyake} and \cite[(4.5.27)]{Miyake}  we  obtain  that 
   $\lbrace T_{n} \rbrace_{n\in \mathbb{N}}$ spans $\mathbb{T}_{k}(C_{0}C_{f},\psi)$ as 
   a $\Qbar$-vector space. Hence, by finite dimensionality, there exists $m$ such that $T_{1},\ldots,T_{m}$ span  
   $\mathbb{T}_{k}(C_{0}C_{f},\psi)$ as a $\Qbar$-vector space.
   There is an isomorphism of $\Qbar$-vector spaces given by 
     (see \cite[Lemma 2]{Ghate_congruences})
    \begin{alignat*}{2}
	       \mathbb{T}_{k}(C_{0}C_{f},\psi) & \longrightarrow   \mathrm{Hom}_{\Qbar}
	       (S_{k}(C_{0}C_{f},\psi;\overline{\mathbb{Q}})  ,\Qbar) \\
	        T &\ \mapsto \ a(1,Tf).
    \end{alignat*}
     By the first part of  the lemma  we know that 
     $\mathcal{L} \in \mathrm{Hom}_{\Qbar}(S_{k}(C_{0}C_{f},\psi;\overline{\mathbb{Q}}),\Qbar)$.
     Therefore,  $\mathcal{L}(K)= a(1, TK)$, for some $T \in \mathbb{T}_{k}(C_{0}C_{f},\psi)$. 
     Since, $T_{1}, \cdots , T_{m}$ span $\mathbb{T}_{k}(C_{0}C_{f},\psi)$ as $\Qbar$-vector space, 
     there exists $\xi_{1}, \ldots, \xi_{m} \in \Qbar$ such that, $T = \sum_{n=1}^{m}\xi_{n}T_{n}$. So, 
    $\mathcal{L}(K)= \sum_{n=1}^{m}\xi_{n} a(1,T_{n}K)= \sum_{n=1}^{m}\xi_{n} a(n,K), \forall \ K 
     \in S_{k}(C_{0}C_{f},\psi;\overline{\mathbb{Q}})$.
\end{proof}

As mentioned earlier, every complex-valued Dirichlet character $\chi$ on 
$\mathbb{Z}_p^\times$ takes values in $\Qbar \subset \C$.  
From now on we think of such character as taking values in $\C_p$ via our fixed  embedding 
$i_{p}:\overline{\mathbb{Q}} \rightarrow \mathbb{C}_{p}$.
Since $\Psi_{l+r}(\chi) \in \Qbar$, for $0 \leq r \leq k-l-1$, by Corollary~\ref{Psi is algebraic}, 
we have $i_p(\Psi_{l+r}(\chi)) \in \C_p$.
%for our fixed embedding $i_{p}:\overline{\mathbb{Q}} \rightarrow \mathbb{C}_{p}$. 
Thus we may think of the complex distribution $\Psi_{l+r}$, as a 
$\mathbb{C}_{p}$-valued distribution. We shall denote these distributions by 
$i_p(\Psi_{l+r})$, for $0 \leq r \leq k-l-1$. 
We now define a candidate for the measure in Theorem~\ref{maintheorem}, namely we take
\begin{eqnarray} \label{mu}
      \mu := i_{p}(\Psi_{l}).
\end{eqnarray}
By Proposition~\ref{Propn 4.1} and\eqref{6.3} we have 
\begin{eqnarray}\label{Psi and L}
	   \Psi_{l+r} (\chi)
	    & = & \gamma(M') \mathcal{L}(\widetilde{K}_{M'}( r-k+l+1)) \nonumber \\
         & = & \gamma(M') \sum_{n=1}^m \xi_n \> a(n, (\widetilde{K}_{M'}( r-k+l+1)) 
                    \quad  \text{(by Lemma~\ref{Lemma 6.1} (2))} \nonumber \\
          & = & \gamma(M') \sum_{n = 1}^m \xi_n  \sum\limits_{C_{f}n_{1}+n_{2} = M'n} 
                    d(n_{1},n_{2};r-k+l+1,\chi),  
\end{eqnarray}
by Proposition~\ref{propn5.3}, where $M'$ is a sufficiently large power of $p$ chosen
 depending on $\chi$, and $\gamma(M')$ is as defined in \eqref{5.12}.  As remarked
earlier, $\gamma(M')$ is $p$-integral in many cases (apparently in all), but in any 
case has bounded denominator, coming from $\Lambda(g)$, since $\alpha(pM')$ is a 
$p$-adic unit. Similarly, the $\xi_n \in \Qbar$ have bounded denominators. 
Finally the $d(n_{1},n_{2};s,\chi)$ also have denominators at worst $\Gamma(k-1)$ 
by \eqref{5.16}, \eqref{5.17}. Hence multiplying $i_p(\Psi_{l+r})$ by a suitable (fixed) 
power of $p$ we may and do assume that $i_p(\Psi_{l+r}(\chi))$ lies in $\mathcal{O}_p$
for all $\chi$. Proving that this rescaled distribution is an $\mathcal{O}_p$-valued measure 
will imply that $i_p(\Psi_{l+r})$ is a (not necessarily  $\mathcal{O}_p$-valued) measure.
\begin{proposition}\label{prop Kummer}  
       For all integers  $ 0 \leq r \leq k-l-1$, we have 
	   \begin{enumerate}[label={\emph{(\arabic*)}}]
	           \item{The $\mathbb{C}_{p}$-valued distributions $i_{p}(\Psi_{l+r})$  are bounded. 
	                     Hence, $i_{p}(\Psi_{l+r})$ are measures on $\mathbb{Z}_{p}^{\times}$.}
	          \item{Moreover, with $\mu$ as in \eqref{mu}, the following equality holds
	                    \footnote{The formula \eqref{measure sign} differs from \cite[(5.6)]{Panrankin} by 
                                          the factor $(-1)^r$.  This factor is forced on us in view of the sign corrections 
                                          mentioned in the previous footnotes. Moreover, this sign has 
                                          theoretical significance: \eqref{measure sign} matches with a general 
                                          expectation about measures attached to $L$-functions of motives \cite[(4.16)]{CP89}. } 
	          \begin{equation}\label{measure sign}
	 	             \int\limits_{\mathbb{Z}_{p}^{\times}} \chi x_{p}^{r} \ d\mu =
	 	             (-1)^{r}\int\limits_{\mathbb{Z}_{p}^{\times}} \chi di_{p}(\Psi_{l+r}).
              \end{equation} }
	  \end{enumerate}
\end{proposition}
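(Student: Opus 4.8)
The plan is to extract both assertions from the explicit formula \eqref{Psi and L}, which writes $i_p(\Psi_{l+r}(\chi))$ as a finite sum of the Fourier coefficients $d(n_1,n_2;r-k+l+1,\chi)$ of Proposition~\ref{propn5.3} taken over pairs with $C_f n_1 + n_2 = M'n$, together with the polynomial shape \eqref{5.16}--\eqref{5.17} of those coefficients. Fix an integer $n \geq 1$. Since every congruence below is modulo $p^n$ and involves only finitely many characters at a time, I may and do choose a single power of $p$, call it $M'$, large relative to $n$ and divisible by $p^2 C_\chi^2$ for all relevant $\chi$. The one elementary fact driving everything is that $C_f n_1 + n_2 = M'n$ with $p \mid M'$ forces
\[ n_2 \equiv -C_f n_1 \pmod{M'}; \]
in particular, as $\chi(n_1) \neq 0$ forces $p \nmid n_1$ and $p \nmid C_f$, each contributing $n_2$, and hence each divisor $d \mid n_2$, is a $p$-adic unit.

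For part (1) I would verify the abstract Kummer congruences of Section~3 (the characters span a dense subspace, as in Example~\ref{example}) for the rescaled, $\mathcal{O}_p$-valued version of $i_p(\Psi_{l+r})$. By \eqref{5.17} one has $P_{s'}(n_2,M'n) = n_2^{-s'} + \frac{M'n}{\Gamma(k-1)}Q_{s'}(n_2,M'n)$ with $s' = r-k+l+1$, and the second summand is divisible by $M'/\Gamma(k-1)$, hence lies in $p^n\mathcal{O}_p$ for $M'$ large. Dropping it, and using that $\gamma(M')$ has bounded denominator (coming from $\Lambda(g)$, with $\alpha(pM')$ a unit) by \eqref{5.12}, that the $\xi_n$ of Lemma~\ref{Lemma 6.1} have bounded denominator, and that the $p$-unit property of $d$ and $n_2$ controls the negative exponents, I obtain a congruence
\[ i_p(\Psi_{l+r}(\chi)) \equiv \sum_\nu c_\nu^{(r)}\, \chi(y_\nu) \pmod{p^n}, \]
where $\nu$ runs over a finite index set of tuples $(n,n_1,d,d')$ independent of $\chi$, each $c_\nu^{(r)} \in \mathcal{O}_p$, and $y_\nu = n_1/(C_g d'^2) \in \mathbb{Z}_p^\times$ collects the character dependence $\overline{\chi}(C_g)\chi(n_1)\overline{\chi}^2(d')$. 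Then for any relation $\sum_\chi b_\chi \chi(y) \in p^n\mathcal{O}_p$ valid for all $y \in \mathbb{Z}_p^\times$, evaluating at $y = y_\nu$ gives $\sum_\chi b_\chi\, i_p(\Psi_{l+r}(\chi)) \equiv \sum_\nu c_\nu^{(r)} \big(\sum_\chi b_\chi \chi(y_\nu)\big) \in p^n\mathcal{O}_p$. The abstract Kummer congruence theorem then yields the measure, so $i_p(\Psi_{l+r})$ is bounded.

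For part (2), with $\mu = i_p(\Psi_l)$ now a measure, I would compute $\int_{\mathbb{Z}_p^\times} x_p^r \chi\, d\mu$ by Riemann sums at level $N \geq n$: as $x_p^r$ is constant to precision $p^{-N}$ on classes modulo $p^N$ and $\mu$ is bounded,
\[ \int_{\mathbb{Z}_p^\times} x_p^r \chi\, d\mu \equiv \sum_{a \bmod p^N} a^r \chi(a)\, \mu(\delta_{N,a}) \pmod{p^n}. \]
Expanding $\mu(\delta_{N,a})$ by orthogonality and inserting the $r=0$ case of the main-term congruence above for each $i_p(\Psi_l(\chi'))$ with $\chi'$ of conductor dividing $p^N$, the identity $\frac{1}{\varphi(p^N)}\sum_{\chi'} \overline{\chi'}(a)\chi'(y_\nu) = \mathbf{1}[a \equiv y_\nu \bmod p^N]$ collapses the double sum to $\sum_\nu c_\nu^{(0)} y_\nu^r \chi(y_\nu) \pmod{p^n}$. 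It remains to compare the two main-term expansions: collecting the $r$-dependent factors $(C_fC_g)^{-r}$, $d^{2r}$, $n_2^{-r}$ and using $n_2 = dd'$ gives $c_\nu^{(r)}/(c_\nu^{(0)} y_\nu^r) = (n_2/(C_f n_1))^r$, which by $n_2 \equiv -C_f n_1$ equals $(-1)^r$ modulo $p^n$. Hence $\int x_p^r\chi\, d\mu \equiv (-1)^r \sum_\nu c_\nu^{(r)}\chi(y_\nu) \equiv (-1)^r i_p(\Psi_{l+r}(\chi)) \pmod{p^n}$ for every $n$, giving the stated equality. The \emph{main obstacle} is exactly this sign bookkeeping: the factor $(-1)^r$ arises solely from $n_2 \equiv -C_f n_1 \pmod{M'}$, and one must simultaneously confirm that the $P_{s'}$-error term and the denominators of $\gamma(M')$, the $\xi_n$, and $d(n_1,n_2;s',\chi)$ remain bounded uniformly in $M'$ so that all the congruences above are legitimate.
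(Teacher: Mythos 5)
Your part (1) is essentially the paper's own argument: both reduce, via \eqref{Psi and L} and Lemma~\ref{Lemma 6.1}, to checking the Kummer congruences coefficient-by-coefficient on the $d(n_{1},n_{2};r-k+l+1,\chi)$ of Proposition~\ref{propn5.3}, both discard the $Q_{s}$-term of \eqref{5.17} using $p^{m}\mid M'/\Gamma(k-1)$, and both exploit that $C_{f}n_{1}+n_{2}=M'n$ makes the character argument a $p$-adic unit (you evaluate at $y_{\nu}=n_{1}/(C_{g}d'^{2})$, the paper at $-d/(d'C)$; these agree via $\chi(n_{1})=\overline{\chi}(-C_{f})\chi(dd')$ as in \eqref{6.8}).

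Part (2) is where you genuinely diverge. The paper never integrates $x_{p}^{r}\chi$ against $\mu$ directly: it runs the same coefficient congruence \eqref{6.6} for the enlarged system $\mathcal{B}'=\lbrace \chi x_{p}^{r}\rbrace$ --- the uniformity in $r$ holds because the test point $-d/(d'C)$ is fed simultaneously into $\chi$ and into $x_{p}^{r}$, which is exactly where $(-1)^{r}$ enters, via $d^{r}=(-1)^{r}(d'C)^{r}(-d/(d'C))^{r}$ --- thereby producing an auxiliary measure $\nu$ with the prescribed values, and then concludes $\mu=\nu$ softly, since the two measures agree on $\mathcal{B}$, which spans $\mathrm{Step}(\mathbb{Z}_{p}^{\times},\mathbb{C}_{p})$. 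You instead compute $\int \chi x_{p}^{r}\,d\mu$ head-on: Riemann sums at level $p^{N}$, orthogonality to expand $\mu(\delta_{N,a})$ in the values $i_{p}(\Psi_{l}(\chi'))$, a second orthogonality to collapse, and a comparison of main-term coefficients; your ratio computation $c_{\nu}^{(r)}/(c_{\nu}^{(0)}y_{\nu}^{r})=(n_{2}/(C_{f}n_{1}))^{r}\equiv(-1)^{r}$ is correct and isolates the sign from the same congruence $n_{2}\equiv -C_{f}n_{1}\ (\mathrm{mod}\ M')$ that drives the paper's \eqref{6.6}. What each buys: your route makes the interpolation property \eqref{measure sign} visible as an explicit limit and needs no auxiliary measure; the paper's route needs no quantitative error tracking at all beyond the congruences, and never touches the denominators discussed next.

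One step in your part (2) needs to be made explicit. The expansion of $\delta_{N,a}$ in characters carries the coefficient $1/\varphi(p^{N})$, of $p$-adic absolute value $p^{N-1}$, so the error in each main-term congruence $i_{p}(\Psi_{l}(\chi'))\equiv \sum_{\nu}c_{\nu}^{(0)}\chi'(y_{\nu})$ is amplified by $p^{N-1}$ before it reaches your Riemann sum: you need that congruence modulo $p^{n+N-1}$, not merely modulo $p^{n}$, and "choose $M'$ large relative to $n$" as written does not cover this, since $N$ varies. The gap is repairable with no new ideas: the conductor condition $p^{2}C_{\chi'}^{2}\mid M'$ for all $\chi'$ of conductor dividing $p^{N}$ already forces $p^{2N+2}\mid M'$, so the precision of the main-term congruence (governed by $M'/\Gamma(k-1)$) grows like $p^{2N}$ and outpaces the amplification $p^{N-1}$; but this observation must be made, and it is precisely the bookkeeping that the paper's auxiliary-measure construction is designed to avoid.
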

\begin{proof} 
	  Fix an integer $0 \leq r \leq k-l-1$. Recall that the linear span of  $\mathcal{B} = \lbrace \chi \mid \chi: 
	  \mathbb{Z}_{p} ^{\times} \rightarrow \mathbb{C}_{p}^{\times} \mathrm{ \ has \ finite \ order }
	  \rbrace $ is dense in ${\mathcal{C}}(\mathbb{Z}_{p}^{\times}, \mathbb{C}_{p})$. 
      %By Proposition~\ref{Propn 4.1} we know that $i_{p}(\Psi_{l+r})$ is a distribution. 
      We claim that the distribution $i_{p}(\Psi_{l+r})$ satisfies the abstract Kummer  
      congruences \eqref{3.4} with  $\mathcal{B}$ as the system of functions. We need to prove  
      that for  every  finite  set  of  characters $ \chi_{1}, \ldots, \chi_{t} \in \mathcal{B}$, constants
      $c_{1}, \cdots , c_{t} \in \mathbb{C}_{p}$ and $m \geq 0$,
     \begin{align*}
	              \mathrm{if} \ \sum \limits_{i=1}^{t} c_{i} \chi_{i} \equiv 0 \ (\mathrm{mod} \ p^{m}),  
                   ~ \mathrm{then} \ \sum\limits_{i=1}^{t} c_{i} \> i_{p}(\Psi_{l+r}(\chi_{i})) \equiv 0 
                   ~(\mathrm{mod} \ p^{m}).
      \end{align*}
           Choose  $M'$ sufficiently large  so that \eqref{Psi and L} holds for each of the $\chi_i$. 
           By \eqref{Psi and L}, this is equivalent to proving
            \begin{align}\label{6.7}
	                \mathrm{if} \ \sum \limits_{i=1}^{t} c_{i} \chi_{i} \equiv 0 \ (\mathrm{mod} \ p^{m}),  
                    ~ \mathrm{then} \ \sum\limits_{i=1}^{t} c_{i} \> d(n_{1},n_{2};r-k+l+1,\chi)  
                     \equiv 0 \ (\mathrm{mod} \ p^{m}).
           \end{align}
         for each $n$, and each $n_{1}$, $n_{2}$ satisfying $C_{f}n_{1}+n_{2}=M'n$.

          If $p\mid n_{1}$, then $d(n_{1},n_{2};r-k+l+1,\chi_{i})=0$, by \eqref{5.16}, since 
          $\chi_i(n_1) = 0$.  So, the relation \eqref{6.7} is trivially true.
          Hereafter, we  assume $p \nmid n_{1}$.  Since $p\nmid C_{f}$, $p \mid M'$ and 
          $C_{f}n_{1}+n_{2}=M'n$,  we have  $p \mid n_{1}$ if and only if $p \mid n_{2}$. 
          So we have $p \nmid n_2$ and $d/d'C$ is a $p$-adic unit, for $dd' = n_2$.
		 Let $s=r-k+l+1$. We may also assume $M'$ has been chosen large enough so that 
		 $p^{m} \mid M'/\Gamma(k-1)$. From  \eqref{5.17} and the equality 
		 $C_{f}n_{1}+n_{2}=M'n$, it follows that 
		\begin{equation}\label{6.8}
		\begin{split}
		P_{s}(n_{2},M'n) & \equiv  n_{2}^{k-l-1-r} \equiv  (dd')^{k-l-1-r} \ \ (\mathrm{mod} \ p^{m}), \\
		 \chi(n_{1}) &= \overline{\chi}(-C_{f})\chi(n_{2})= \overline{\chi}(-C_{f})\chi(dd').
		 \end{split}
		\end{equation} 
         By  \eqref{5.16} and  \eqref{6.8}, we have the congruence
		\begin{eqnarray*}
                \label{congruenceford}
		        d(n_{1},n_{2};r-k+l+1,\chi) & \equiv &  2   \overline{\chi}(-C) C^{k-l-r-1}b(n_{1})
		         \sum\limits_{n_{2}=dd'} \psi\overline{\omega \chi} (d') 
                   \chi(d) (d')^{k-l-r-1}  d^{r} \nonumber \\  
                   & & \quad \qquad  \quad \qquad \qquad \qquad \qquad \qquad \qquad (\mathrm{mod} \ p^{m}). 
		\end{eqnarray*}
		Therefore,
		\begin{align}\label{6.6}	
		(-1)^{r} \sum\limits_{i=1}^{t}  c_{i} \> d(n_{1},n_{2};r-k+l+1,\chi_{i}) \equiv 
		2  b(n_{1})  \sum\limits_{n_{2}=dd'} \psi\overline{\omega} (d')   (d'C)^{k-l-1}  
		& \sum\limits_{i=1}^{t} c_{i}   \chi_{i} \Big (\frac{-d}{d'C} \Big ) \Big ( \frac{-d}{d'C} \Big )^{r} \nonumber \\
		& (\mathrm{mod} \ p^{m}).
		\end{align}
		By assumption,  $\sum_{i} c_{i}\chi_{i} \equiv 0 $ (mod $p^{m}$), so 
		$\sum_{i}^{} c_{i} \chi_{i}(-d/d'C) \equiv 0 \ (\mathrm{ mod} \ p^{m})$. 
         Since each $-d/d'C$ is a $p$-adic unit, we obtain
		 $\sum_{i=1}^{t} c_{i} \> d(n_{1},n_{2};r-k+l+1,\chi_{i}) \equiv 0 \ (\mathrm{mod} \ p^{m})$.
		Thus \eqref{6.7} holds and this finishes the proof of (1). 

        For (2), we claim there exists a $\mathbb{C}_{p}$-valued measure $\nu$ such that 
		\begin{align*}
		       \int\limits_{\mathbb{Z}_{p}^{\times}} \chi x_{p}^{r} \ d\nu =(-1)^{r} 
		        \int\limits_{\mathbb{Z}_{p}^{\times}} \chi \ di_{p}(\Psi_{l+r}), ~~~~ \forall  
		        ~ ~ 0 \leq r \leq k-l-1.
		\end{align*}
		Let $\mathcal{B}' = \lbrace \chi x_{p}^{r} \mid \chi \in \mathcal{B} ~
		 \mathrm{and} \ 0 \leq r \leq k-l-1 \rbrace $. 
         To prove the existence of this measure, it is enough to  verify the abstract 
          Kummer congruences hold for $\mathcal{B}'$ as the system of functions. 
         As in (1), we need to prove for every  finite  set  of  characters $ \chi_{i} \in \mathcal{B}'$ 
         and $c_{i,r} \in \mathbb{C}_{p}$, 
		\begin{equation}\label{6.9}
		       \mathrm{if} \ \sum \limits_{i,r} c_{i,r} \chi_{i} x_{p}^{r} \equiv 0 ~
		         (\mathrm{mod} \ p^{m}), \ \mathrm{then} \ 
		         \sum\limits_{i,r} (-1)^{r} c_{i,r} \> d(n_{1},n_{2};r-k+l+1,\chi_{i}) 
		         \equiv 0 \ (\mathrm{mod} \ p^{m}).
		\end{equation}
		As observed above, if $p \mid n_{1}$, then $d(n_{1},n_{2};r-k+l+1,\chi_{i})=0$, so \eqref{6.9} holds. 
         For $p\nmid n_{1}$, it follows from  \eqref{6.6}  that 
		\begin{equation*}
		      \begin{split}
		              \sum\limits_{i,r} (-1)^{r} c_{i,r} \> d(n_{1},n_{2};r-k+l+1,\chi_{i}) \equiv   
		              \sum\limits_{n_{2}=dd'} 2 & b(n_{1})  \psi\overline{\omega} (d') (d'C)^{k-l-1} \\ 
		              & \times \bigg (  \sum\limits_{i,r} c_{i,r} \chi_{i} \Big ( \frac{-d}{d'C}  \Big ) 
		              \Big ( \frac{-d}{d'C} \Big )^{r} \bigg ) \ \ (\mathrm{mod} \ p^{m}).
		       \end{split}
		\end{equation*}
		By the assumption in \eqref{6.9},  the inner sum is congruent to 0 (mod $p^{m})$. Thus
		\begin{align*}
		      \sum\limits_{i,r} (-1)^{r} c_{i,r} \> d(n_{1},n_{2};r-k+l+1,\chi_{i}) \equiv 0 
		       \ (\mathrm{mod} \ p^{m}),
		\end{align*}
                so again \eqref{6.9} holds.
		This proves that $\nu$ as claimed above exists. Further $\mu$ and $ \nu$ agree on
		 $\mathcal{B}$ (take $r = 0$) which spans Step$(\mathbb{Z}_{p}^{\times}, \mathbb{C}_{p})$.
		 Hence, $\mu = \nu$. This completes the proof of (2).
 \end{proof}

Let $\mu$ be the distribution in \eqref{mu}. By Proposition~\ref{prop Kummer} (1) 
with $r =0$, we see that $\mu$ is a measure. By \eqref{measure sign}, and \eqref{4.8} 
with $s = l + r$, we see that $\mu$ satisfies the interpolation property
\footnote{The sign  $(-1)^r$ in \eqref{measure sign} directly contributes to the 
corrected sign $(-1)^r$ in Theorem~\ref{maintheorem}.} of Theorem~\ref{maintheorem}. 
This completes the proof of Theorem~\ref{maintheorem}.

\vspace{.2cm}

{\noindent \bf  Acknowledgements:} We thank B. Balasubramanyam, D. Benois, D. Loeffler
and S. Kobayashi for helpful conversations.

%\bibliography{biblio.bib}
%\bibliographystyle{alpha}
\begin{flushleft}

\end{flushleft}

{\noindent Address:} School of Mathematics, TIFR, Homi Bhabha Road, Mumbai 400005, India

{\noindent Email:} {\tt{eghate@math.tifr.res.in, ravithej@math.tifr.res.in}}

\end{document}